\def\@fnsymbol#1{\ensuremath{\ifcase#1\or \dagger\or *\or \ddagger\or
     \mathsection\or \mathparagraph\or \|\or **\or \dagger\dagger
     \or \ddager\ddager \else\@cterr\fi}}
\newcommand{\Ltwo}[1]{%
	\ifthenelse{\equal{#1}{}}{L^2}{L^2(#1)}%
}
\newcommand{\Ltwoz}[1]{%
	\ifthenelse{\equal{#1}{}}{L^2_0}{L^2_0(#1)}%
}
\newcommand{\Cone}[1]{%
	\ifthenelse{\equal{#1}{}}{C^{1}}{C^{1}(#1)}%
}
\newcommand{\Conez}[1]{%
	\ifthenelse{\equal{#1}{}}{C^{1}_{0}}{C^{1}_{0}(#1)}%
}
\newcommand{\Ctwo}[1]{%
	\ifthenelse{\equal{#1}{}}{C^{2}}{C^2(#1)}%
}
\newcommand{\Ctwoz}[1]{%
	\ifthenelse{\equal{#1}{}}{C^{2}_{0}}{C^{2}_{0}(#1)}%
}
\newcommand{\Cholder}[1]{%
	\ifthenelse{\equal{#1}{}}{C^{0,\gamma}}{C^{0,\gamma}(#1)}%
}
\newcommand{\Cholderz}[1]{%
	\ifthenelse{\equal{#1}{}}{C^{0,\gamma}_{0}}{C^{0,\gamma}_{0}(#1)}%
}
\newcommand{\bolds}[1]{\boldsymbol{#1}}
\newcommand{\bb}{\bolds{b}}
\newcommand{\bc}{\bolds{c}}
\newcommand{\be}{\bolds{e}}
\newcommand{\bff}{\bolds{f}}
\newcommand{\bg}{\bolds{g}}
\newcommand{\bn}{\bolds{n}}
\newcommand{\bu}{\bolds{u}}
\newcommand{\bv}{\bolds{v}}
\newcommand{\bw}{\bolds{w}}
\newcommand{\bx}{\bolds{x}}
\newcommand{\by}{\bolds{y}}
\newcommand{\nb}[3]{%
  {\colorbox{#2}{\bfseries\scriptsize\textcolor{white}{#1}}}%
  {\textcolor{#2}{\textit{#3}}}}
\newcommand{\rl}[1]{\nb{RPL}{BlueViolet}{#1}}
\newcommand{\sautoref}[2]{\hyperref[#2]{#1 \ref*{#2}}}
\newtheorem{theorem}{Theorem}[section]
\newtheorem{lemma}{Lemma}[section]
\newtheorem{hypothesis}{Hypothesis}[section]
\newtheorem{definition}{Definition}[section]
\numberwithin{equation}{section}
\title{ \bf  Nonlocal elastodynamics and fracture\thanks{This material is based upon work supported by the U. S. Army Research Laboratory and the U. S. Army Research Office under contract/grant number W911NF1610456.}}
\author{Robert P. Lipton\thanks{Department of Mathematics, Louisiana State University,
              Baton Rouge, LA 70803,
              Orcid: https://orcid.org/0000-0002-1382-3204,
              \tt{$^*$lipton@lsu.edu}
}
\and
Prashant K. Jha\thanks{Oden Institute for Computational Engineering and Sciences,
	      The University of Texas at Austin,
	      Austin, TX 78712,
	      Orcid: https://orcid.org/0000-0003-2158-364X, 
	      \tt{ pjha@utexas.edu}}
}
\begin{document}

\maketitle
\date{}

\begin{abstract}
A nonlocal field theory of peridynamic type is applied to model the brittle fracture problem.
The elastic fields obtained from the nonlocal model are shown to converge in the limit of vanishing non-locality to solutions of classic plane elastodynamics associated with a running crack. 
\end{abstract}
\begin{flushleft}
{\bf Keywords:}     {Brittle Fracture, Peridynamics, Nonlinear, Nonlocal, Elastodynamic}
\end{flushleft}
   {\bf AMS Subject} 34A99, 74R99
\section{Introduction}

Fracture can be viewed as a collective interaction across large and small length scales. With the application of enough stress or strain to a brittle material, atomistic scale bonds will break, leading to fracture of the macroscopic specimen.  From a modeling perspective fracture should appear as an emergent phenomena generated by an underlying field theory eliminating the need for a supplemental kinetic relation describing crack growth. The displacement field inside the body for points $\bx$  at time $t$ is written $\bu(\bx,t)$.
The perydynamic model  \cite{CMPer-Silling}, \cite{States}, 
is described by the nonlocal balance of linear momentum of the form
\begin{equation}\label{eqn-uf}
\begin{aligned}
   \rho{{\bu}_{tt}}(\bx,t) = \int_{{\mathcal H}_{\epsilon}({\bx})} {\bolds{f}}(\by,\bx)\;d\by + \bb(\bx,t)
\end{aligned}
\end{equation}
where $\mathcal{H}_{\epsilon}({\bx})$ is a neighborhood of $\bx$,  $\rho$ 
is the density, $\bb$ is the body force 
density field, and $\bolds{f}$ is a material-dependent constitutive law
that represents the force density that a point $\by$ inside the neighborhood exerts on $\bx$ as a result of the deformation field.
The radius $\epsilon$ of the neighborhood is referred to as the \emph{horizon}. 
Here all points satisfy the same field equation \eqref{eqn-uf}. 
The displacement fields and fracture evolution predicted by the nonlocal model should agree with the dynamic fracture of specimens when the length scale of non-locality is sufficiently small. In this respect numerical simulations are compelling, see for example \cite{Bobaru 2015}, \cite{silling05}, and \cite{ParksTrasketal}.

The displacement for the nonlocal theory is examined in the limit of vanishing non-locality. This is done for a class of peridynamic models with nonlocal forces derived from double well potentials see,  \cite{CMPer-Lipton}.  The term double well describes the force potential between two points. One of the wells is degenerate and appears at infinity while the other is at zero strain.  For small strains the nonlocal force is linearly elastic but for larger strains the  force begins to soften and then approaches zero after reaching a critical strain. This type of nonlocal model is called a cohesive model. Fracture energies of this type have been defined for displacement gradients in \cite{trusk} with the goal of understanding fracture as a phase transition in the framework of \cite{erick}.

We theoretically investigate the limit of the displacements for the cohesive model as the length scale $\epsilon$ of nonlocal interaction goes to zero.  All information on this limit is obtained from what is known from the nonlocal model for $\epsilon>0$. In this paper the single edge notch specimen is considered as given in figure \ref{singlenotch} and the target theory governing the evolution of  displacement fields is identified when $\epsilon=0$.  
\begin{figure} 
\centering
\begin{tikzpicture}[xscale=0.60,yscale=0.60]

%
%
\draw [-,thick] (-1.80,0.05) -- (-1.3,0.05);

\draw [-,thick] (-1.80,-0.05) -- (-1.3,-0.05);


\draw [-,thick] (-1.3,0.05) to [out=0, in=90] (-1.25 ,0);

\draw [-,thick] (-1.25,0) to [out=-90, in=0] (-1.3 ,-0.05);


\draw [-,thick] (-2,0.2) to [out=-90, in=180] (-1.8 ,0.05);

\draw [-,thick] (-2,-0.2) to [out=90, in=180] (-1.8 ,-0.05);

\draw [-,thick] (-2,2.8) to [out=90, in=180] (-1.8 ,3.0);

\draw [-,thick] (-2,-2.8) to [out=-90, in=180] (-1.8 ,-3.0);

\draw [-,thick] (1.8,3.0) to [out=0, in=90] (2.0 ,2.8);

\draw [-,thick] (1.8,-3.0) to [out=0, in=-90] (2.0 ,-2.8);


%
%
\draw [-,thick] (-2,2.8) -- (-2,0.2);
\draw [-,thick] (-2,-2.8) -- (-2,-0.2);
\draw [-,thick] (-1.8,-3) -- (1.8,-3);
\draw [-,thick] (2,2.8) -- (2,-2.8);
\draw [-,thick] (1.8,3) -- (-1.8,3);
%
%
%
%
\node [right] at (-0.5,0.0) {{\Large $D$}};

%
%

\draw[->,thick] (-1.8,3.0) -- (-1.8,3.80);

\draw[->,thick] (-0.9,3.0) -- (-0.9,3.80);

\draw[->,thick] (0.0,3.0) -- (0.0,3.80);

\draw[->,thick] (0.9,3.0) -- (0.9,3.80);

\draw[->,thick] (1.8,3.0) -- (1.8,3.80);


\draw[->,thick] (-1.8,-3.0) -- (-1.8,-3.80);

\draw[->,thick] (-0.9,-3.0) -- (-0.9,-3.80);

\draw[->,thick] (0.0,-3.0) -- (0.0,-3.80);

\draw[->,thick] (0.9,-3.0) -- (0.9,-3.80);

\draw[->,thick] (1.8,-3.0) -- (1.8,-3.80);

\end{tikzpicture} 
\caption{{ \bf Single-edge-notch}}
 \label{singlenotch}
\end{figure}
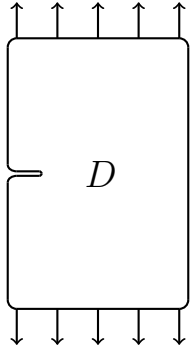
One of the hallmarks of peridynamic simulations is localization of defect sets with horizon as $\epsilon\rightarrow 0$. Theoretically localization of the jump set of the displacement is established as $\epsilon\rightarrow 0$ in \cite{CMPer-Lipton3},  \cite{CMPer-Lipton} where the limiting displacement is shown to be an $SBD^2(D)$ valued function for almost all times $t\in [0,T]$, see section \ref{ss:crackwaveinteraction}. The nonlocal cohesive  model converges to a dynamic model having bounded Griffith fracture energy associated with brittle fracture and elastic displacement fields satisfying the elastic wave equation \cite{CMPer-Lipton3},  \cite{CMPer-Lipton} away from the fractures. This can be seen for arbitrarily shaped specimens with smooth boundary in two and three dimensions. However the explicit traction law relating the crack boundary to the elastic field lies out side the scope of that analysis.

This paper builds on earlier work and provides a  global description of the limit dynamics describing elastic fields surrounding a crack for the single edge notch pulled apart by traction forces on its top and bottom edges. The objective of this paper is to show that the elastic fields seen in the  nonlocal model are consistent with those in the local model in the limit of vanishing horizon. The analysis given here shows that it is possible to recover the boundary value problem for the linear elastic displacement given by Linear Elastic Fracture Mechanics inside a cracking body as the limit of a nonlocal fracture model.  To illustrate this a family of initial value problems given in the nonlocal formulation is prescribed. The family is parameterized by horizon size $\epsilon$. The crack motion for $\epsilon>0$ is prescribed by the solutions of the nonlocal initial value problem. It is shown that up to subsequences that as $\epsilon\rightarrow 0$ the displacements associated with the solution of the nonlocal model converge in mean square uniformly in time to the limit 
displacement $\bu^0(\bx,t)$ that satisfies:
\begin{itemize}

\item  Prescribed inhomogeneous traction boundary conditions.

\item Balance of linear momentum as described by the linear elastic wave equation off the crack.

\item Zero traction on the sides of the evolving crack.

\item The set on which the elastic displacement jumps is a subset of the crack set.

\item The limiting crack motion is determined by the sequence of nonlocal problems for $\epsilon>0$ and is obtained in the $\epsilon=0$ limit. 

\end{itemize}
The first four items deliver the boundary conditions, elastodynamic equations, traction boundary conditions on the crack, and correlation between displacement jumps and crack set articulated in the theory of dynamic Linear Elastic Fracture Mechanics (LEFM)  \cite{Freund}, \cite{RaviChandar}, \cite{Anderson}, 
\cite{Slepian}.  The $\epsilon\rightarrow 0$ limit of displacement fields for the nonlocal model is  seen to be a weak solution for the wave equation on a time dependent domain recently defined in the work of \cite{DalToader}, see theorem \ref{zerohorizonweaksoln}. Here the time dependent domain is given by the domain surrounding the moving crack. This establishes a rigorous connection between the nonlocal fracture formulation using a peridynamic model derived from a double well potential and the wave equation posed on cracking domains given in \cite{DalToader}.  If one assumes a more general crack structure for the nonlocal problem then a connection to the local problem for more general time dependent domains can be made, this is discussed in the conclusion. 

The analysis treats a  dynamic problem and
compactness methods suited to the balance of momentum for nonlocal - nonlinear operators, are applied, see lemma \ref{convergences} and theorem \ref{limitspace} . Proceeding this way delivers the zero traction condition on the crack lips for the fracture model in the local limit. 
Another issue is to prescribe body forces for the nonlocal model that transform to into boundary tractions for the local model. In this paper a suitable layer of force is prescribed adjacent to  the boundary of the sample for the nonlocal model. It is motivated by the one proposed in \cite{ParksTrasketal}. The layer of force is shown to converge to the standard traction boundary conditions seen in local models, see lemma \ref{convergingrhs}. This theoretically corroborates the numerical experiments with the nonlocal model carried out in \cite{ParksTrasketal}. It is pointed out that the nonlocal model considered here is elastic, so cracks can heal if the strain across the crack drops below the critical value. However, in this paper the initial conditions and boundary conditions are chosen such that the specimen is under tensile strain and pulled apart so the crack has no opportunity to heal. More complex models \cite{CMPer-Lipton5} involving dissipation and non-monotone or cyclic load paths lie outside the scope of the paper and provide interesting avenues for future research.

The nonlocal model is an example of several new approaches to dynamic fracture modeling.  These include solution of the wave equation on cracking domains  \cite{DalLarsenToader}, \cite{DalLar}, \cite{DalToader}, \cite{Nicaise}, 
phase field methods, \cite{Hughes},  \cite{BourdinLarsenRichardson}, \cite{Miehe}, \cite{Ortiz},  and peridynamics
\cite{CMPer-Silling}, \cite{States}, \cite{Bobaru 2015}, \cite{ParksTrasketal}. In the absence of fracture and dynamics the $\Gamma$ convergence approach has been applied to peridynamic boundary value problems. The nonlocal formulations are shown to converge to equilibrium boundary value problems for hyperelastic and elastic materials as $\epsilon\rightarrow 0$, see \cite{Bellido}, \cite{CMPer-Mengesha2}. It is noted that the aforementioned references while relevant to this work are only a few from a rapidly expanding literature.

The paper is organized as follows: In section \ref{s:nonlocal dynamics} the nonlocal constitutive law as derived from a double well potential is described and the nonlocal boundary value problem describing crack evolution is given. 
Section \ref{ss:crackwaveinteraction} provides the principle results of the paper and describes the convergence of the displacement fields given in the nonlocal model to  the elastic displacement field satisfying,  the linear wave equation off the crack set, zero Neumann conditions on the crack, and traction boundary conditions. Existence and uniqueness for the nonlocal problems are established in \ref{existenceuniqueness}. The convergence theorems are proved in sections \ref{s:proofofsymmetry} and \ref{s:proofofconvergence}. The proof that the limit displacement is a weak solution of the wave equation on a time dependent domain is given in section \ref{s:weaksolnproof}. The results are summarized in the conclusion section \ref{s:conclusions}.

\section{Nonlocal Elastodynamics}\label{s:nonlocal dynamics}

In this section we formulate the nonlocal dynamics as an initial boundary value problem driven by a layer of force adjacent to the boundary.  Here all quantities are non-dimensional.  Define the region $D$ given by a notched rectangle with rounded corners, see figure \ref{singlenotch}.  The domain lies within the rectangle $\{0<x_1<a;\,-b/2<x_2<b/2\}$ and the notch originates on the left side of the specimen and is of thickness $2d$ and total length $\ell(0)$ with a circular tip and rounded corners, see figure \ref{singlenotch}. 
The domain is subject to plane strain loading and we will assume small deformations so the deformed configuration is the same as the reference configuration.  We have $\bu=\bu(\bx,t)$ as a function of space and time but will  suppress the $\bx$ dependence when convenient and write $\bu(t)$. The tensile strain $S$ between two points $\bx,\by$ in $D$ along the direction $\be_{\by-\bx}$ is defined as
\begin{align}\label{strain}
S(\by,\bx,\bu(t))=\frac{\bu(\by,t)-\bu(\bx,t)}{|\by-\bx|}\cdot \be_{\by-\bx},
\end{align}
where $ \be_{\by-\bx}=\frac{\by-\bx}{|\by-\bx|}$ is a unit vector and ``$\cdot$'' is the dot product.

The nonlocal force ${\bolds{f}}$ is defined in terms of a double well potential that is a function of the strain $S(\by,\bx,\bu(t))$.
The force potential is defined for all $\bx,\by$ in $D$ by
\begin{equation}\label{tensilepot}
\mathcal{W}^\epsilon (S(\by,\bx,\bu(t)))=J^\epsilon(|\by-\bx|)\frac{1}{\epsilon^3\omega_2|\by-\bx|}\Psi(\sqrt{|\by-\bx|}S(\by,\bx,\bu(t)))
\end{equation}
where $\mathcal{W}^\epsilon (S(\by,\bx,\bu(t)))$ is the pairwise force potential per unit length between two points $\bx$ and $\by$. 
Here, the influence function $J^\epsilon(|\by-\bx|)$ is a measure of the influence that the point $\by$ has on $\bx$. Only points inside the horizon can influence $\bx$ so $J^\epsilon(|\by-\bx|)$ is nonzero for $|\by - \bx| < \epsilon$ and is zero otherwise. We take $J^\epsilon$ to be of the form: $J^\epsilon(|\by - \bx|) = J(\frac{|\by - \bx|}{\epsilon})$ with $J(r) = 0$ for $r\geq 1$ and $0\leq J(r)\leq M < \infty$ for $r<1$.

The force potential is described in terms of its potential function and to fix ideas
 $\Psi$ is given by 
\begin{equation}\label{potential}
 \Psi=h(r^2) 
 \end{equation}
 where $h$ is concave, see figure \ref{ConvexConcave}(a). Here $\omega_2$ is the area of the unit disk and $\epsilon^2\omega_2$ is the area of the horizon $\mathcal{H}_{\epsilon}(\bx)$.
The potential function $\Psi$ represents a convex-concave potential such that the associated force acting between material points $\bx$ and $\by$ are initially elastic and then soften and decay to zero as the strain between points increases, see  figure \ref{ConvexConcave}(b). The force between $\bx$ and $\by$ is referred to as the bond force. The first well for $\mathcal{W}^\epsilon (S(\by,\bx,\bu(t)))$ is at zero tensile strain and the potential function satisfies
\begin{align}
\label{choice at oregon}
\Psi(0)=\Psi'(0) = 0.
\end{align}
The well for $\mathcal{W}^\epsilon (S(\by,\bx,\bu(t)))$ in the neighborhood of infinity is characterized by the horizontal asymptote $\lim_{S\rightarrow \infty} \Psi(S)=C^+$, see figure \ref{ConvexConcave}(a). The critical tensile strain $S_c>0$ for which the force begins to soften is given by the inflection point ${r}^c>0$ of $g$ and is 
\begin{equation}
S_c=\frac{{r}^c}{\sqrt{|\by-\bx|}},
\label{crittensileplus}
\end{equation}
and $S_+$ is the strain at which the force goes to zero 
\begin{equation}
S_+=\frac{{r}^+}{\sqrt{|\by-\bx|}}.
\label{crittensileplus2}
\end{equation}
We assume here that the  potential functions are bounded and are smooth. It is pointed out that for this modeling the bond force in compression allows for eventual softening. However one can easily generalize the analysis to handle an asymmetric bond force that resists compression.

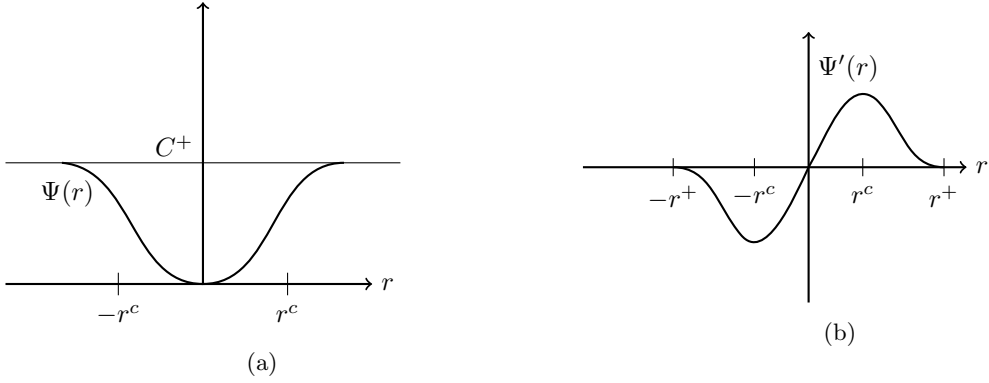
\begin{figure}
    \centering
    \begin{subfigure}{.45\linewidth}
        \begin{tikzpicture}[xscale=0.75,yscale=0.75]
		    \draw [<->,thick] (0,5) -- (0,0) -- (3.0,0);
			\draw [-,thick] (0,0) -- (-3.5,0);
			\draw [-,thin] (0,2.15) -- (3.5,2.15);
			\draw [-, thin] (-3.5,2.15) -- (0,2.15);
			\draw [-,thick] (0,0) to [out=0,in=-180] (2.5,2.15);
			\draw [-,thick] (-2.5,2.15) to [out=-5,in=180] (0,0);
			
			\draw (1.5,-0.2) -- (1.5, 0.2);
			\node [below] at (1.5,-0.2) {${r}^c$};
			
			\draw (-1.5,-0.2) -- (-1.5, 0.2);
			\node [below] at (-1.5,-0.2) {${-r}^c$};

			\node [right] at (3,0) {$r$};
			\node [left] at (0,2.45) {$C^+$};
			\node [above] at (-2.4,1.20) {$\Psi(r)$};
		  \end{tikzpicture}
		  \caption{}
		  \label{ConvexConcavea}
    \end{subfigure}
    \hskip2em
    \begin{subfigure}{.45\linewidth}
        \begin{tikzpicture}[xscale=0.6,yscale=0.6]
		    \draw [<-,thick] (0,3) -- (0,-3);
			\draw [->,thick] (-5,0) -- (3.5,0);
			\draw [-,thick] (0,0) to [out=60,in=140] (1.5,1.5) to [out=-45,in=180] (3,0.0);
			
			\draw [-,thick] (-3.0,-0.0) to [out=0,in=130] (-1.5,-1.5) to [out=-50, in=245] (0,0);
			
			\draw (3.0,-0.2) -- (3.0, 0.2);
			\draw (-3.0,-0.2) -- (-3.0, 0.2);
			\draw (1.2,-0.2) -- (1.2, 0.2);
			\draw (-1.2,-0.2) -- (-1.2, 0.2);
			\node [below] at (1.2,-0.2) {${r}^c$};
			\node [below] at (-1.2,-0.2) {$-{r}^c$};
			\node [below] at (3.0,-0.2) {${r}^+$};
			\node [below] at (-3.0,-0.2) {$-{r}^+$};
			\node [right] at (3.5,0) {${r}$};
			\node [right] at (0,2.2) {$\Psi'(r)$};
		  \end{tikzpicture}
		   \caption{}
		   \label{ConvexConcaveb}
    \end{subfigure}
    \caption{\bf (a) The double well potential function $\Psi(r)$  for tensile force. Here $C^+$ is the asymptotic value of $\Psi$. (b) Cohesive force. The derivative of the force potential goes smoothly to zero at $\pm r^+$.}\label{ConvexConcave}
\end{figure}

\subsection{Peridynamic equation of motion}
The potential energy of the motion is given by
\begin{equation}\label{new peri}
\begin{aligned}
PD^\epsilon(\bu)=\int_D \int_{\mathcal{H}_\epsilon(\bx)\cap D} |\by-\bx|\mathcal{W}^\epsilon(S(\by,\bx,\bu(t)))\,d\by d\bx.
\end{aligned}
\end{equation}
We consider single edge notched specimen  $D$ pulled apart by an $\epsilon$ thickness layer of body force on the top and bottom of the domain consistent with plain strain loading. In the nonlocal setting the ``traction'' is given by the layer of body force on the top and bottom of the domain. For this case the body force is written as 
\begin{equation}\label{bodyforce}
\begin{aligned}
\bb^\epsilon(\bx,t)=\be^2\epsilon^{-1}{g(x_1,t)}\chi_+^\epsilon(x_1,x_2) \hbox{  on the top layer and}\\
\bb^\epsilon(\bx,t)=-\be^2\epsilon^{-1}{g(x_1,t)}\chi_-^\epsilon(x_1,x_2)\hbox{  on the bottom  layer,}
\end{aligned}
\end{equation}
where $\be^2$ is the unit vector in the vertical direction, $\chi^\epsilon_+$ and $\chi^\epsilon_-$ are the characteristic functions of the boundary layers given by
\begin{equation}\label{layers}
\begin{aligned}
\chi_+^\epsilon(x_1,x_2)&=1\,\,\,\hbox{on $\{\theta<x_1<a-\theta, \,b/2-\epsilon<x_2<b/2\}$ and $0$ otherwise,}\\
\chi_-^\epsilon(x_1,x_2)&=1\,\,\,\hbox{on $\{\theta<x_1<a-\theta, \,-b/2<x_2<-b/2+\epsilon\}$ and $0$ otherwise},
\end{aligned}
\end{equation}
where $\theta$ is the radius of curvature of the rounded corners of $D$.
The top and bottom traction forces are equal and in opposite directions and
$g(x_1,t)>0$. We take the function $g$ to be smooth and bounded in the variables $x_1$  and $t$ and define $\bg$ on $\partial D$
such that
\begin{equation}\label{l2}
\begin{aligned}
\bg=\pm\be^2g \hbox{ on } \{\theta\leq x_1\leq a-\theta,\,\,x_2 = \pm b/2\}\text{ and } \bg=0\hbox{ elsewhere on }\partial D.
\end{aligned}
\end{equation}

For any in-plane rigid body motion $\bw(\bx)={\mathbf \Omega}\times\bx+\bc$ where ${\mathbf \Omega}$ and $\bc$ are constant vectors we see that
\begin{equation}
\label{rigid}
\begin{aligned}
\int_D\bb^\epsilon\cdot\bw\,d\bx=0  \hbox{        and        }  S(\by,\bx,\bw)=0,
\end{aligned}
\end{equation}
and we show in lemma \ref{convergingrhs} that $\bb^\epsilon$, is a bounded linear functional on an appropriate Sobolev space and converges as $\epsilon\rightarrow 0$ to a boundary traction.

For future reference we denote the space of all square integrable fields orthogonal to rigid body motions in the $L^2$ inner product  by 
\begin{equation}
\label{rigidbody}
\dot L^2(D;\mathbb{R}^2).
\end{equation}

In this treatment the density $\rho$ is assumed constant and we define the {Lagrangian}  $${\rm{L}}(\bu,\partial_t \bu,t)=\frac{\rho}{2}||\dot \bu||^2 _{L^2 (D;\mathbb{R}^2)}-PD^\epsilon(\bu)+\int_D \bb^\epsilon\cdot \bu \,d\bx,$$
where $\dot \bu=\frac{\partial \bu}{\partial t}$ is the velocity. The action integral for a time evolution over the interval $0<t<T,$  is given by
\begin{equation}\label{action}
\begin{aligned}
I=\int_0^T{\rm{L}}(\bu,\partial_t \bu,t)\,dt.
\end{aligned}
\end{equation}
We suppose $\bu^\epsilon(t)$ is a stationary point and $\bw(t)$ is a perturbation and applying the {principal of least action} gives the nonlocal dynamics
\begin{eqnarray}\label{energy based weakform}
\begin{aligned}
&\rho \int_0^T\int_D\dot{\bu}^\epsilon(\bx,t)\cdot {\dot\bw}(\bx,t)d\bx\,dt\\
 &=
\int_0^T\int_D \int_{\mathcal{H}_\epsilon(\bx)\cap D} |\by-\bx|\partial_S\mathcal{W}^\epsilon(S(\by,\bx,\bu^\epsilon(t)))S(\by,\bx,\bw(t))\,d\by d\bx\,dt\\
&-\int_0^T\int_D\bb^\epsilon(\bx,t)\cdot\bw(\bx,t)d\bx\,dt.
\end{aligned}
\end{eqnarray}
and an integration by parts gives the strong form 
\begin{equation}\label{energy based model2}
\begin{aligned}
\rho \ddot{\bu}^\epsilon(\bx,t)=\mathcal{L}^\epsilon(\bu^\epsilon)(\bx,t)+\bb^\epsilon(\bx,t),\hbox{  for  $\bx\in D$}.
\end{aligned}
\end{equation}
Here $\mathcal{L}^\epsilon(\bu^\epsilon)$ is the peridynamic force 
\begin{align}\label{pdforce}
\mathcal{L}^\epsilon(\bu^\epsilon)=\int_{\mathcal{H}_{\epsilon}(x)\cap D} {\bolds{f}}^\epsilon(\by,\bx)\;d\by
\end{align}
and ${\bolds{f}}^\epsilon(\bx,\by)$ is given by
\begin{align}\label{nonlocforcetensite}
{\bolds{f}}^\epsilon(\bx,\by)&=2\partial_S\mathcal{W}^\epsilon(S(\by,\bx,\bu^\epsilon(t)))\be_{\by-\bx},
\end{align}
where
\begin{align}\label{derivbond}
\partial_S\mathcal{W}^\epsilon(S(\by,\bx,\bu^\epsilon(t)))&=\frac{1}{\epsilon^{3} \omega_2}\frac{J^\epsilon(|\by-\bx|)}{|\by-\bx|}\partial_S \Psi(\sqrt{|\by-\bx|}S(\by,\bx,\bu^\epsilon(t))).
\end{align}

The dynamics is complemented with the initial data 
\begin{align}\label{idata}
\bu^\epsilon(\bx,0)=\bu_0(\bx), \qquad  \partial_t \bu^\epsilon(\bx,0)=\bv_0(\bx).
\end{align}
Where $\bu_0$ and $\bv_0$ lie in $\dot L^2(D;\mathbb{R}^2)$. 

The initial value problem for the nonlocal evolution  given by \eqref{energy based model2} and \eqref{idata}
or equivalently by \eqref{energy based weakform} and \eqref{idata} has a unique solution in $C^2([0,T];\dot L^2(D;\mathbb{R}^2))$, see section \ref{existenceuniqueness}. Application of Gronwall's inequality shows that the nonlocal evolution $\bu^\epsilon(\bx,t)$ is uniformly bounded in the mean square norm over the time interval $0<t<T$, 
\begin{eqnarray}
\max_{0<t<T}\left\{\Vert \bu^\epsilon(\bx,t)\Vert_{L^2(D;\mathbb{R}^2)}^2\right\}<K,
\label{bounds}
\end{eqnarray}
where the upper bound $K$ is independent of $\epsilon$ and depends only on the initial conditions and body force applied up to time $T$, see  \cite{CMPer-Lipton}.

\subsection{\bf Failure zone and softening zone geometry}
\label{failurezone}
The failure zone and softening zone are introduced and described for the $\epsilon>0$ model.
The failure zone $FZ^\epsilon(t)$  represents the crack in the nonlocal model at a given time $t$. 
This is the set of pairs $\bx$ and $\by$ with $|\by-\bx|<\epsilon$ for which the force ${\bolds{f}}^\epsilon(\bx,\by)$ acting between them is zero. In this problem the domain and body force adjacent to the upper and lower boundaries are symmetric with respect to the $x_2=0$ axis, see \eqref{bodyforce}. The body force is perpendicular to the $x_2=0$ axis and points in the $\be^2$ direction on the top boundary layer and the $-\be^2$ direction on the bottom boundary layer.  Choosing initial conditions appropriately the solution to the initial value problem has its first component $u_1^\epsilon$ even with respect to the $x_2=0$ axis and second component  $u_2^\epsilon$ odd for $t\in [0,T]$.  For the time dependent body force chosen here the failure is in tension and confined to a neighborhood of the $x_2=0$ axis of width $2\epsilon$ where strains are largest. The failure zone nucleated at the notch and the  failure zone is defined by a centerline lying on the $x_2=0$ axis. The failure zone propagates from the notch into the interior of the specimen. The tip of the notch is defined to be the point $(x_1=\ell(0),\,x_2=0)$. The failure zone centerline is 
\begin{equation}\label{center}
C^\epsilon=\{\ell(0)\leq x_1\leq\ell^\epsilon(t),\,x_2=0\}.  
\end{equation}
 The failure zone is written as
\begin{equation}
\begin{aligned}
FZ^\epsilon(t)=\{\bx\hbox{ and }\by\in D,\,|\by-\bx|<\epsilon:\,\, \bx+s(\by-\bx) \cap C^\epsilon(t)\not=\emptyset, \hbox{ for some } s\in [0,1]\},
\end{aligned}
\label{failure}
\end{equation}
The centerline is shown in figure \ref{P} and the failure zone is the shaded region.
Here ${\bolds{f}}^\epsilon(\bx,\by)=0$ for $\bx$ and $\by$ in $FZ^\epsilon(t)$.
The crack motion is prescribed by the monotonically increasing function $\ell^\epsilon(t)$ for every $\epsilon>0$, $t\in[0,T]$. We will assume that the crack does not propagate all the way through the sample, i.e., $\ell^\epsilon(T)<a-\delta$, for every $\epsilon$ where $\delta$ is a small fixed positive constant.

The total traction force on on the layer of thickness $\epsilon$ above the failure zone centerline exerted by the body below the failure zone centerline is null and vice versa.
Associated with the failure zone is the softening zone. The softening zone $SZ^\epsilon(t)$ is the set of pairs $\bx$ and $\by$ with $|
\by-\bx|<\epsilon$ separated by the $x_2=0$ axis such that the force ${\bolds{f}}^\epsilon(\bx,\by)$  between them is non-increasing with increasing strain.  
From this it is clear that $FZ^\epsilon(t)\subset SZ^\epsilon(t)$.
Furthermore at the leading edge of the crack one sees force softening between points $\bx$ and $\by$ separated by less than  $\epsilon$ on either side of the $x_2=0$ axis. As the crack centerline moves forward passing between $\bx$ and $\by$ the force between $\bx$ and $\by$ decreases to zero, see figure \ref{P}. That is given $t$ there is a later time $t+\Delta t$ for which $FZ^\epsilon(t+\Delta t)=SZ^\epsilon(t)$. The process zone where the bonds have softened but not failed, i.e., $\bx,\by \in SZ^\epsilon(t)\setminus FZ^\epsilon(t)$ is of length proportional to $\epsilon$. The softening zone $SZ^\epsilon(t)$ is specified through a softening zone centerline.
The force between two points $\bx$ and $\by$ separated by the softening zone centerline decreases with time. The centerline is
\begin{equation}
\begin{aligned}
 S^\epsilon(t)=\{\ell(0)\leq x_1\leq\ell^\epsilon(t)+C\epsilon,\, x_2=0\},
\end{aligned}
\label{sc}
\end{equation}
where $C$ is a positive constant. The softening zone is written as
\begin{equation}
\begin{aligned}
SZ^\epsilon(t)=\{\bx\hbox{ and }\by\in D,\,|\by-\bx|<\epsilon:\,\,  \bx+s(\by-\bx) \cap S^\epsilon(t)\not=\emptyset, \hbox{ for some } s\in [0,1]\} .
\end{aligned}
\label{soften}
\end{equation}

The strain $S(\by,\bx,\bu^\epsilon(t))$ is decomposed for $\bx$ and $\by$ in $D$ and $|\by-\bx|<\epsilon$ as 
\begin{eqnarray}
S(\by,\bx,\bu^\epsilon(t))=S(\by,\bx,\bu^\epsilon(t))^- +S(\by,\bx,\bu^\epsilon(t))^+
\label{decomposeS}
\end{eqnarray}
where
\begin{equation}
S(\by,\bx,\bu^\epsilon(t))^-=\left\{\begin{array}{ll}S(\by,\bx,\bu^\epsilon(t)),&\hbox{if  $|S(\by,\bx,\bu^\epsilon(t))|<S_c$}\\
0,& \hbox{otherwise}
\end{array}\right.
\label{decomposedetailsS}
\end{equation}
and
\begin{equation}
S(\by,\bx,\bu^\epsilon(t))^+=\left\{\begin{array}{ll}S(\by,\bx,\bu^\epsilon(t)),&\hbox{if  $|S(\by,\bx,\bu^\epsilon(t))|\geq S_c$}\\
0,& \hbox{otherwise}
\end{array}\right.
\label{decomposedetailsS2}
\end{equation}
with
\begin{equation}
\begin{aligned}
&S(\by,\bx,\bu^\epsilon(t))^-=\{\,(\bx,\by)\not\in SZ^\epsilon(t)\,\},\\
&S(\by,\bx,\bu^\epsilon(t))^+=\{\,(\bx,\by)\in SZ^\epsilon(t)\,\}.
\end{aligned}
\label{decomposedetailsS3}
\end{equation}
In the next section we recover the dynamics in the limit of vanishing horizon with failure zone and softening zone given by \eqref{failure} and \eqref{soften}. The equations \eqref{failure} and \eqref{soften} constitute the hypothesis on the crack structure for the nonlocal model. 
For the loading prescribed here \eqref{failure} and \eqref{soften} naturally emerge and are a consequence of the symmetry of solution  $\bu^\epsilon(\bx,t)$, this is seen in  simulations \cite{Jha-Lipton2020}.

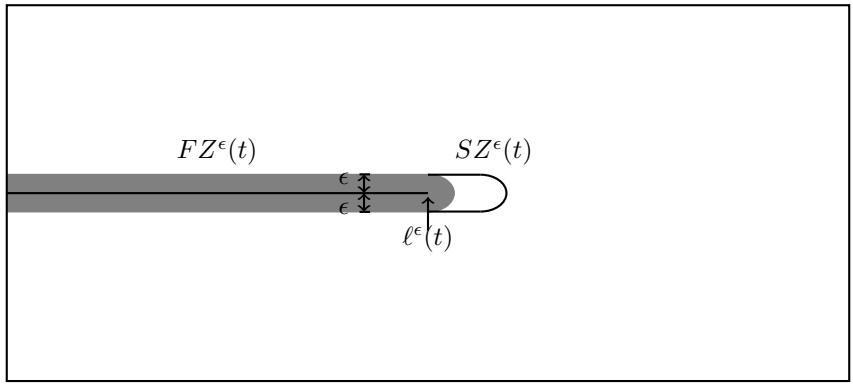
\begin{figure} 
\centering
\begin{tikzpicture}[xscale=0.70,yscale=0.50]

\draw [fill=gray, gray] (-8,-0.5) rectangle (0.0,0.5);






\node [above] at (-4.0,0.5) {$FZ^\epsilon(t)$};

\draw[fill=gray, gray] (0,-0.5) arc (-90:90:0.5)  -- cycle;






\node [above] at (1.25,0.5) {$SZ^\epsilon(t)$};

\draw[-,thick] (1.0,-0.49) arc (-90:90:0.49);

\draw[-,thick] (1.0,-0.49) -- (-0.0,-0.49);

\draw[-,thick] (1.0,0.49) -- (-0.0,0.49);



\draw [-,thick] (-8,0) -- (-0.0,0);




\draw [thick] (-8,-5) rectangle (8,5);












\node [above] at (-1.6,0) {$\epsilon$};

\node [below] at (-1.6,0) {$\epsilon$};

\draw [thick] (-1.3,.5) -- (-1.1,.5);

\draw [<->,thick] (-1.215,.5) -- (-1.215,0);

\draw [<->,thick] (-1.215,-.5) -- (-1.215,0);

\draw [thick] (-1.3,-.5) -- (-1.1,-.5);

\draw [->,thick] (0.0,-1.0) -- (0.0,-0.1);

\node  at (0.0,-1.2) { $\ell^\epsilon(t)$};

\end{tikzpicture} 
\caption{{ \bf The failure zone, failure zone centerline, and softening zone.}}
 \label{P}
\end{figure}

\section{Convergence of nonlocal elastodynamics to  elastic fields in Linear Elastic Fracture Mechanics}\label{ss:crackwaveinteraction}

The crack structure is prescribed by $\ell^\epsilon(t)$  of  \eqref{center} together with \eqref{failure}, and \eqref{soften}, and the elastic fields $\bu^\epsilon$ are solutions of \eqref{energy based model2} and \eqref{idata}.  The crack structure for $\epsilon>0$ is  summarized in the following hypothesis:
\begin{hypothesis}[Crack Structure  for $\epsilon>0$.]\label{hyp1}
The moving domain associated with the defect is prescribed by $\ell^\epsilon(t)$  of  \eqref{center}, and the failure zone and softening zone are given by \eqref{failure}, and \eqref{soften}.
\end{hypothesis}
Given hypothesis \ref{hyp1} we now describe the convergence of  $\bu^\epsilon$ to $\bu^0$ to see that $\bu^0$ satisfies the  boundary value problem for the elastic field of LEFM for a running crack given in \cite{Freund}. 
Recall  $\ell^{\epsilon}(t)$ is monotone increasing with time and bounded so from  Helly's selection theorem we can pass to a subsequence if necessary to assert that $\ell^{\epsilon_n}(t)\rightarrow\ell^0(t)$ point wise for $t\in[0,T]$, where  $\ell^{0}(t)$ is monotone increasing with time and bounded. This delivers the crack motion for the $\epsilon=0$ problem  described by the crack 
\begin{equation}\label{crackforlimitproblem}
\Gamma_t=\{\ell(0)\leq x_1\leq \ell^0(t),\,x_2=0\},\hbox{    } t\in[0,T].
\end{equation}
Here $\tau<t$ implies $\Gamma_\tau\subset\Gamma_t$.
The time dependent domain surrounding the crack is defined as $D_t=D\setminus \Gamma_t$ see figure \ref{target}.

Next we describe the convergence of body force, velocity, and acceleration given by the $\epsilon>0$ initial value problems \eqref{energy based model2} and \eqref{idata} to their $\epsilon=0$ counterparts.
The convergence of the elastic displacement field, velocity field and acceleration field are described  in terms of suitable Hilbert space topologies. The space of strongly measurable functions $\bw\,:\,[0,T]\rightarrow  \dot L^2(D;\mathbb{R}^2)$ that are square integrable in time is denoted by $L^2(0,T;\dot L^2(D;\mathbb{R}^2))$. Additionally we recall the Sobolev space $H^1(D;\mathbb{R}^2)$ with norm 
\begin{equation}\label{h1}
\Vert\bw\Vert_{H^1(D;\mathbb{R}^2)}:=\left(\int_ D\,|\bw|^2\,d\bx+\int_D|\nabla\bw|^2\,d\bx\right)^{1/2}.
\end{equation}
The subspace of $H^1(D;\mathbb{R}^2)$
containing all vector fields orthogonal to the rigid motions with respect to the $L^2(D;\mathbb{R}^2)$ inner product is written
\begin{equation}\label{h85}
\dot H^1(D;\mathbb{R}^2).
\end{equation}
The Hilbert space dual to $\dot H^1(D;\mathbb{R}^2)$ is denoted by $\dot H^1(D;\mathbb{R}^2)'$.  The set of functions strongly square  integrable in time taking values in $\dot H^1(D;\mathbb{R}^2)'$ for $0\leq t\leq T$ is denoted by $L^2(0,T;\dot H^1(D;\mathbb{R}^2)')$.  These Hilbert spaces are well known and related to the wave equation, see \cite{Evans}.

The body force given in \eqref{energy based model2}  is written as $\bb^{\epsilon_n}(t)$  and we state the following lemma.
\begin{lemma}\label{convergingrhs}
There is a positive constant $C$ independent of $\epsilon_n$ and $t\in [0,T]$ such that
\begin{equation}\label{righthandside}
\begin{aligned}
|\langle\bb^{\epsilon_n}(t),\bw\rangle|
\leq C\Vert\bw\Vert_{H^1(D,\mathbb{R}^2)},\hbox{ for all $\epsilon_n>0$ and $\bw \in \dot H^{1}(D,\mathbb{R}^2)$},
\end{aligned}
\end{equation}
where $\langle\cdot,\cdot\rangle$ is the duality paring between $\dot H^{1}(D,\mathbb{R}^2)$ and its Hilbert space dual $\dot H^{1}(D,\mathbb{R}^2)'$. In addition there exists $\bb^0(t)$ such that $\bb^{\epsilon_n}\rightharpoonup\bb^0$ in  $L^2(0,T;\dot H^1(D;\mathbb{R}^2)')$ and
\begin{equation}\label{righthandsidelimit}
\begin{aligned}
\langle\bb^0(t),\bw\rangle=&\langle\bg(t),\bw\rangle:=\int_{\partial D}\,\bg(t)\cdot\bw\,d\sigma,
\end{aligned}
\end{equation}
for all $\bw \in \dot H^{1}(D,\mathbb{R}^2)$, where $\bg(t)$ is defined by \eqref{l2} and $\bg\in H^{-1/2}(\partial D)^2$.
\end{lemma}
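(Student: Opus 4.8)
The plan is to reduce the estimate to a uniform bound for an ``averaged trace'' of $\bw$ onto the two horizontal edges of $D$, and then to identify its $\epsilon_n\to 0$ limit with the ordinary trace. Since $\bb^{\epsilon_n}(t)\in L^2(D;\mathbb{R}^2)$, the pairing is simply $\langle\bb^{\epsilon_n}(t),\bw\rangle=\int_D\bb^{\epsilon_n}(\bx,t)\cdot\bw(\bx)\,d\bx$; substituting \eqref{bodyforce}--\eqref{layers} and using that the vertical forcing only sees the second component $w_2$ of $\bw$ gives
\[
\langle\bb^{\epsilon_n}(t),\bw\rangle=\int_\theta^{a-\theta} g(x_1,t)\,\bigl(T^{+}_{\epsilon_n}\bw(x_1)-T^{-}_{\epsilon_n}\bw(x_1)\bigr)\,dx_1,
\]
where $T^{+}_{\epsilon_n}\bw(x_1):=\epsilon_n^{-1}\!\int_{b/2-\epsilon_n}^{b/2}w_2(x_1,x_2)\,dx_2$ and $T^{-}_{\epsilon_n}\bw(x_1):=\epsilon_n^{-1}\!\int_{-b/2}^{-b/2+\epsilon_n}w_2(x_1,x_2)\,dx_2$. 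Note that, because the cutoffs $\chi^{\epsilon_n}_{\pm}$ are supported in $\{\theta<x_1<a-\theta\}$, these thin strips lie against the straight edges $\{x_2=\pm b/2\}$ and stay away from the rounded corners of $D$, which is what keeps the constants below independent of $\epsilon_n$.

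For the bound it is enough to argue for $\bw\in C^\infty(\overline{D};\mathbb{R}^2)$ and then pass to all of $H^1(D;\mathbb{R}^2)$, hence to $\dot H^1(D;\mathbb{R}^2)$, by density. For smooth $\bw$ the fundamental theorem of calculus on vertical segments gives $w_2(x_1,x_2)=w_2(x_1,b/2)-\int_{x_2}^{b/2}\partial_{x_2}w_2(x_1,s)\,ds$, so that
\[
T^{+}_{\epsilon_n}\bw(x_1)=w_2(x_1,b/2)-\epsilon_n^{-1}\!\int_{b/2-\epsilon_n}^{b/2}\!\!\int_{x_2}^{b/2}\partial_{x_2}w_2(x_1,s)\,ds\,dx_2,
\]
and the last term is bounded in absolute value by $\int_{b/2-\epsilon_n}^{b/2}|\partial_{x_2}w_2(x_1,s)|\,ds$. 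Multiplying by $g$, integrating in $x_1$, using $\Vert g\Vert_{L^\infty}<\infty$, the classical trace inequality $\Vert w_2\Vert_{L^2(\partial D)}\le C\Vert\bw\Vert_{H^1(D)}$ on the smooth domain $D$ (see \cite{Evans}) together with $\Vert\cdot\Vert_{L^1(\partial D)}\le|\partial D|^{1/2}\Vert\cdot\Vert_{L^2(\partial D)}$ for the leading term, and Cauchy--Schwarz over the strip $(\theta,a-\theta)\times(b/2-\epsilon_n,b/2)$ of area $O(\epsilon_n)$ for the remainder, yields
\[
|\langle\bb^{\epsilon_n}(t),\bw\rangle|\le \Vert g\Vert_{L^\infty}\Bigl(C\Vert\bw\Vert_{H^1(D)}+2\bigl((a-2\theta)\epsilon_n\bigr)^{1/2}\Vert\nabla\bw\Vert_{L^2(D)}\Bigr)\le C'\Vert\bw\Vert_{H^1(D)},
\]
with $C'$ independent of $\epsilon_n$ and of $t\in[0,T]$ (the bottom strip is identical with the opposite sign). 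This is \eqref{righthandside}.

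The same identity delivers the limit: for smooth $\bw$ the remainder above is $O\bigl(\epsilon_n^{1/2}\Vert\nabla\bw\Vert_{L^2(D)}\bigr)\to 0$, so $\langle\bb^{\epsilon_n}(t),\bw\rangle\to\int_\theta^{a-\theta}g(x_1,t)\bigl(w_2(x_1,b/2)-w_2(x_1,-b/2)\bigr)\,dx_1=\int_{\partial D}\bg(t)\cdot\bw\,d\sigma=\langle\bg(t),\bw\rangle$, using the definition \eqref{l2} of $\bg$ (which vanishes off the top and bottom edges); this convergence is uniform in $t\in[0,T]$ since the error is controlled by $\Vert g\Vert_{L^\infty}$ alone. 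By \eqref{righthandside} and density the convergence $\langle\bb^{\epsilon_n}(t),\bw\rangle\to\langle\bg(t),\bw\rangle$ extends to every $\bw\in\dot H^1(D;\mathbb{R}^2)$; that $\bg(t)$ defines an element of $\dot H^1(D;\mathbb{R}^2)'$ and that $\bg\in H^{-1/2}(\partial D)^2$ follow from $\bg(t)\in L^2(\partial D)^2\hookrightarrow H^{-1/2}(\partial D)^2$ (as $g$ is smooth and bounded) together with the trace inequality, while passing to the limit in \eqref{rigid} shows $\langle\bg(t),\bw\rangle=0$ on rigid motions, consistent with the quotient space $\dot H^1$. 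Finally, for any $\bv\in L^2(0,T;\dot H^1(D;\mathbb{R}^2))$, applying the previous step at each fixed $t$ to $\bw=\bv(t)$ gives $\langle\bb^{\epsilon_n}(t),\bv(t)\rangle\to\langle\bg(t),\bv(t)\rangle$ for every $t$, while $|\langle\bb^{\epsilon_n}(t),\bv(t)\rangle|\le C\Vert\bv(t)\Vert_{H^1(D)}\in L^1(0,T)$; the dominated convergence theorem then gives $\int_0^T\langle\bb^{\epsilon_n}(t),\bv(t)\rangle\,dt\to\int_0^T\langle\bg(t),\bv(t)\rangle\,dt$, i.e.\ $\bb^{\epsilon_n}\rightharpoonup\bb^0:=\bg$ in $L^2(0,T;\dot H^1(D;\mathbb{R}^2)')$. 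The only point needing genuine care is the $\epsilon_n$-uniformity of the averaged-trace bound; this is where a boundary trace theorem is really used, but since the forcing layer abuts the straight edges the estimate there reduces to the elementary one-dimensional computation above, so no serious obstacle remains.
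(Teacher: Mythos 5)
Your proof is correct, but it takes a genuinely different route from the paper's. The paper bounds the layer functional by writing $I_{\epsilon_n}\le\bigl(\int_0^1\int_{\partial D_{\delta(y)}}|\bw|^2\,ds\,dy\bigr)^{1/2}$ for a family of nested domains $D_{\delta(y)}$ shrunk inward by $\delta(y)=\epsilon_n(1-y)$, and then invokes the trace theorem on each $D_{\delta(y)}$, arguing that the trace constants $C_{\delta(y)}$ can be taken uniformly because they depend only on the Lipschitz character of the boundary. Your argument replaces this family-of-traces step with a direct one-dimensional fundamental-theorem-of-calculus identity on vertical segments, splitting the averaged trace $T^{\pm}_{\epsilon_n}\bw$ into the boundary value $w_2(x_1,\pm b/2)$ plus an $O(\epsilon_n^{1/2}\Vert\nabla\bw\Vert_{L^2})$ remainder. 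This is more elementary and more transparent: the $\epsilon_n$-uniformity of the bound and the identification of the limit both drop out of the same algebraic identity, and you correctly isolate the geometric fact that makes this work, namely that the $\theta$-cutoff keeps the forcing layer against the two flat edges of $D$. It also sharpens the conclusion slightly: because you identify the pointwise limit $\langle\bb^{\epsilon_n}(t),\bw\rangle\to\langle\bg(t),\bw\rangle$ for every $\bw$ and every $t$, the dominated-convergence step gives convergence of the full sequence $\bb^{\epsilon_n}$, not merely of a subsequence as in the paper's Banach--Alaoglu extraction. One cosmetic inaccuracy: $D$ is a Lipschitz domain with piecewise-smooth boundary rather than a smooth domain, but the trace inequality you invoke for the leading term holds at that level of regularity, so nothing is affected.
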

The traction force  \eqref{righthandsidelimit}  delivers loading consistent with a mode one crack in the local model given by LEFM.
For ease of exposition we defer the proof of lemma \ref{convergingrhs} as well as proofs of all other theorems introduced here to sections \ref{s:proofofsymmetry} and \ref{s:proofofconvergence}.

Passing to subsequences as necessary we obtain the convergence of the elastic displacement field, velocity field, and acceleration field given by 
\begin{lemma}\label{convergences}
\begin{equation}\label{convgencess}
\begin{aligned}
\bu^{\epsilon_n}\rightarrow \bu^0 & \hbox{  \rm strong in } C([0,T];\dot L^2(D;\mathbb{R}^2))\\
\dot\bu^{\epsilon_n}\rightharpoonup \dot\bu^0 &\hbox{ \rm weakly in  }L^2(0,T;\dot L^2(D;\mathbb{R}^2))\\
\ddot\bu^{\epsilon_n}\rightharpoonup \ddot\bu^0 &\hbox{ \rm weakly in }L^2(0,T;\dot H^1(D;\mathbb{R}^2)'),
\end{aligned}
\end{equation}
where $\dot\bu^0(t)$ and $\ddot\bu^0(t)$ are distributional derivatives in time.
\end{lemma}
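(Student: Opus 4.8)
\emph{Plan of proof.} My plan is to run the standard compactness scheme for nonlocal evolutions: extract $\epsilon$--independent a priori bounds from the energy identity, pass to weak/weak--$*$ limits for the velocity and acceleration, upgrade the displacement convergence to strong convergence in $C([0,T];\dot L^2(D;\mathbb{R}^2))$ by a nonlocal Aubin--Lions/Arzel\`a--Ascoli argument, and identify the limits of $\dot\bu^{\epsilon_n}$ and $\ddot\bu^{\epsilon_n}$ with the distributional time derivatives of $\bu^0$. For the a priori bounds I would test \eqref{energy based weakform} with $\bw=\dot\bu^\epsilon$ (equivalently, differentiate $\frac{\rho}{2}\|\dot\bu^\epsilon(t)\|^2_{L^2(D;\mathbb{R}^2)}+PD^\epsilon(\bu^\epsilon(t))$ along the flow), integrate by parts in $t$ in the body--force term, control $\bb^\epsilon(t)$ and $\partial_t\bb^\epsilon(t)$ in $\dot H^1(D;\mathbb{R}^2)'$ by lemma \ref{convergingrhs}, and close by Gronwall's inequality as in \cite{CMPer-Lipton}; this produces a constant $K$ independent of $\epsilon$ and of $t\in[0,T]$ with $\|\dot\bu^\epsilon(t)\|^2_{L^2(D;\mathbb{R}^2)}+PD^\epsilon(\bu^\epsilon(t))\le K$, alongside \eqref{bounds}. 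Since rigid motions lie in the kernel of $\mathcal{L}^\epsilon$ and $\bb^\epsilon(t)\perp$ rigid motions by \eqref{rigid}, $\bu^\epsilon(t)$ and $\dot\bu^\epsilon(t)$ remain in $\dot L^2(D;\mathbb{R}^2)$ for all $t$, and so do the limits.

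\emph{Velocity and displacement.} The velocity bound makes $\{\dot\bu^\epsilon\}$ bounded in $L^\infty(0,T;\dot L^2(D;\mathbb{R}^2))\hookrightarrow L^2(0,T;\dot L^2(D;\mathbb{R}^2))$, so by weak sequential compactness of this reflexive space a subsequence satisfies $\dot\bu^{\epsilon_n}\rightharpoonup\bv$ weakly there. For the displacement, $\|\bu^\epsilon(t)-\bu^\epsilon(s)\|_{L^2(D;\mathbb{R}^2)}=\|\int_s^t\dot\bu^\epsilon\,d\tau\|_{L^2(D;\mathbb{R}^2)}\le\sqrt{K}\,|t-s|$, so $\{\bu^\epsilon\}\subset C([0,T];\dot L^2(D;\mathbb{R}^2))$ is equi--Lipschitz in time; moreover, for each fixed $t$ the bound $\sup_\epsilon\big(\|\bu^\epsilon(t)\|^2_{L^2(D;\mathbb{R}^2)}+PD^\epsilon(\bu^\epsilon(t))\big)\le K$ together with the compactness theorem for cohesive nonlocal functionals of this type (\cite{CMPer-Lipton3}, \cite{CMPer-Lipton}; this is also what forces the limit into $SBD^2(D)$, see \S\ref{ss:crackwaveinteraction}) makes $\{\bu^\epsilon(t)\}$ relatively compact in $L^2(D;\mathbb{R}^2)$. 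Extracting along a countable dense set of times, diagonalizing, and using the equicontinuity (Arzel\`a--Ascoli for Banach--space--valued maps) yields a further subsequence with $\bu^{\epsilon_n}\to\bu^0$ strongly in $C([0,T];\dot L^2(D;\mathbb{R}^2))$; since this convergence and $\dot\bu^{\epsilon_n}\rightharpoonup\bv$ both hold in $\mathcal{D}'((0,T);\dot L^2(D;\mathbb{R}^2))$, uniqueness of distributional limits gives $\bv=\dot\bu^0$.

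\emph{Acceleration.} Use \eqref{energy based model2} as $\rho\ddot\bu^\epsilon(t)=\mathcal{L}^\epsilon(\bu^\epsilon(t))+\bb^\epsilon(t)$ in $\dot H^1(D;\mathbb{R}^2)'$. The body--force term is uniformly bounded there by lemma \ref{convergingrhs}; for the peridynamic term I would pair $\mathcal{L}^\epsilon(\bu^\epsilon(t))$ with $\bw\in\dot H^1(D;\mathbb{R}^2)$, use $(\bw(\by)-\bw(\bx))\cdot\be_{\by-\bx}=|\by-\bx|\,S(\by,\bx,\bw)$ and the fact that $|\by-\bx|\,\partial_S\mathcal{W}^\epsilon$ carries a factor $\sqrt{|\by-\bx|}$ (the chain rule in \eqref{derivbond}), and split the strain of $\bu^\epsilon$ through \eqref{decomposeS}--\eqref{decomposedetailsS3}. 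On the subcritical (elastic) part $|\Psi'(r)|\le C|r|$, so after the rescaling $\by=\bx+\epsilon\bxi$ a Cauchy--Schwarz in the measure $J^\epsilon(|\by-\bx|)\,d\by\,d\bx$ splits the contribution into a factor that is (a rescaling of) the subcritical part of $PD^\epsilon(\bu^\epsilon(t))$, hence $\le C\sqrt{K}$ by the coercivity $\Psi(r)\ge c_0 r^2$ for $|r|\le r^c$, and a factor that is (a rescaling of) a nonlocal $H^1$--seminorm of $\bw$, hence $\le C\|\bw\|_{H^1(D;\mathbb{R}^2)}$ by the difference--quotient inequality $\|\bw(\cdot+\bh)-\bw\|_{L^2}\le|\bh|\,\|\nabla\bw\|_{L^2}$ applied to a bounded extension of $\bw$ (valid since $\partial D$ is Lipschitz). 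On the supercritical part $|\Psi'|$ is bounded while $\Psi\ge\Psi(r^c)>0$, so the $J^\epsilon$--weighted measure of the supercritical pairs is $\le C\epsilon^3 PD^\epsilon(\bu^\epsilon(t))$, and a second Cauchy--Schwarz again gives a bound $\le C\|\bw\|_{H^1(D;\mathbb{R}^2)}$. Hence $\sup_{t\in[0,T]}\|\mathcal{L}^\epsilon(\bu^\epsilon(t))\|_{\dot H^1(D;\mathbb{R}^2)'}\le C$ uniformly in $\epsilon$, so $\{\ddot\bu^\epsilon\}$ is bounded in $L^\infty(0,T;\dot H^1(D;\mathbb{R}^2)')\hookrightarrow L^2(0,T;\dot H^1(D;\mathbb{R}^2)')$ and a further subsequence converges weakly there to some $\ba$. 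Since $\dot\bu^{\epsilon_n}\rightharpoonup\dot\bu^0$ in $L^2(0,T;\dot L^2(D;\mathbb{R}^2))\hookrightarrow L^2(0,T;\dot H^1(D;\mathbb{R}^2)')$ implies $\ddot\bu^{\epsilon_n}\to\ddot\bu^0$ in $\mathcal{D}'((0,T);\dot H^1(D;\mathbb{R}^2)')$, we get $\ba=\ddot\bu^0$, and relabelling the successively extracted subsequences finishes the proof.

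\emph{Main obstacle.} The two substantive points are the fixed--time $L^2(D;\mathbb{R}^2)$--precompactness of $\{\bu^\epsilon(t)\}$ — i.e.\ the nonlocal compactness theory underlying the $SBD^2$ structure — and the $\epsilon$--uniform bound on $\mathcal{L}^\epsilon(\bu^\epsilon)$ in $\dot H^1(D;\mathbb{R}^2)'$, which hinges on the cohesive sub/super--critical decomposition and on the precise $\sqrt{|\by-\bx|}$ scaling of $\mathcal{W}^\epsilon$ (without it the estimate degrades by a power of $\epsilon$). Both rest on the $\epsilon$--uniform bound for $PD^\epsilon(\bu^\epsilon(t))$ obtained in the a priori step, whose proof must absorb the $\epsilon^{-1}$--scaled boundary--layer body force using lemma \ref{convergingrhs}; I expect this to be the most delicate ingredient.
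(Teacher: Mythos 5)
Your proof is correct and follows essentially the same route as the paper: a priori energy/Gronwall bounds, Arzel\`a--Ascoli plus fixed-time nonlocal compactness for the strong convergence (the paper delegates this to Theorem 5.1 of \cite{CMPer-Lipton}), weak compactness for $\dot\bu^{\epsilon_n}$, and a subcritical/supercritical split of the peridynamic force (your coercivity bound $\Psi(r)\geq c_0 r^2$ on $|r|\leq r^c$ plus boundedness of $\Psi'$ and $O(\epsilon)$-smallness of the supercritical set is precisely what the paper implements via \eqref{L2bound}, \eqref{max}, and \eqref{Fprimesecond}) to get $\sup_t\Vert\ddot\bu^{\epsilon_n}(t)\Vert_{\dot H^1(D;\mathbb{R}^2)'}\leq C$. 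The only cosmetic difference is that you bound the difference quotient of $\bw$ by extending $\bw$ to a larger domain, whereas the paper uses the in-domain integral representation $D_{\be}^{\epsilon_n|\xi|}\bw\cdot\be=\int_0^1\mathcal{E}\bw(\bx+s\epsilon_n|\xi|\be)\be\cdot\be\,ds$ with Jensen and Fubini; both are standard and interchangeable here.
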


With the additional caveat that 
\begin{equation}\label{linfty}
\sup_{[0,T]}\sup_{\epsilon>0}\Vert\bu^\epsilon(t)\Vert_{L^\infty(D,\mathbb{R}^2)}<\infty,
\end{equation}
 the limit evolution $\bu^0(\bx,t)$ is seen to be a special function of bounded deformation $SBD(D)$ for almost all times \cite{CMPer-Lipton3} and \cite{CMPer-Lipton}. We will include \eqref{linfty} in the hypotheses of subsequent theorems when we make use of the fact that $\bu^0$ belongs to $SBD(D)$.
Functions $\bu\in SBD(D)$ belong to  $L^1(D;\mathbb{R}^d)$ (where $d=2$ in this work)  and are approximately continuous, i.e., have Lebesgue limits for almost every $\bx\in D$  given by
\begin{eqnarray}
\lim_{\epsilon\searrow 0}\frac{1}{\omega_2\epsilon^2}\int_{\mathcal{H}_\epsilon(\bx)}\,|\bu(\by)-\bu(\bx)|\,d\by=0, 
\label{approx}
\end{eqnarray}
where $\mathcal{H}_\epsilon(\bx)$ is the ball of radius $\epsilon$ centered at $\bx$ and $\omega_2\epsilon^2$ is its area given in terms of the area of the unit disk $\omega_2$ times $\epsilon^2$. 
The set of points in $D$ which are not points of approximate continuity is denoted by $S_{\bu}$. A subset of these points are given by 
the  jump set $\mathcal{J}_{\bu}$. The jump set  is defined to be the set of points of discontinuity which have two different one sided Lebesgue limits.  One sided Lebesgue limits of  $\bu$ with respect to a direction $\nu_{\bu(\bx)}$ are  denoted by $\bu^-(\bx)$, $\bu^+(\bx)$ and are given by
\begin{equation}
\begin{aligned}
&\lim_{\epsilon\searrow 0}\frac{1}{\epsilon^2\omega_2}\int_{\mathcal{H}^-_\epsilon(\bx)}\,|\bu(\by)-\bu^-(\bx)|\,d\by=0,\\
&\lim_{\epsilon\searrow 0}\frac{1}{\epsilon^2\omega_2}\int_{\mathcal{H}^+_\epsilon(\bx)}\,|\bu(\by)-\bu^+(\bx)|\,d\by=0,
\end{aligned}
\label{approxjump}
\end{equation}
where $\mathcal{H}^-_\epsilon(\bx)$ and $\mathcal{H}^+_\epsilon(\bx)$ are given by the intersection of $\mathcal{H}_\epsilon(\bx)$ with the half spaces $(\by-\bx)\cdot \nu_{\bu(\bx)}<0$ and $(\by-\bx)\cdot \nu_{\bu(\bx)}>0$ respectively. $SBD(D)$ functions have jump sets ${\mathcal{J}}_{\bu}$, that are countably rectifiable. Hence they are described by a countable number of components $K_1,K_2,\ldots$, contained within smooth manifolds, with the exception of a set $K_0$ that has zero $1$ dimensional Hausdorff measure  
\cite{AmbrosioCosicaDalmaso}. 
The one dimensional Hausdorff  measure of ${\mathcal J}_{\bu}$ agrees with the one dimensional Lesbegue measure and $\mathcal{H}^{1}({\mathcal J}_{\bu})=\sum_i\mathcal{H}^{1}(K_i)$.
The  strain  of a displacement $\bu$ belonging to $SBD(D)$, written as  $\mathcal{E}_{ij} \bu^0(t)=(\partial_{x_i}\bu^0_j+\partial_{x_j}\bu^0_i)/2$, is a generalization of the classic local strain tensor and is related to the 
nonlocal strain $S(\by,\bx,\bu^0)$  by 
\begin{equation}
\lim_{\epsilon\rightarrow 0}\frac{1}{\epsilon^2\omega_2}\int_{\mathcal{H}_\epsilon(\bx)}|{S(\by,\bx,\bu^0)}-\mathcal{E}\bu^0(\bx) \be\cdot \be|\,d\by=0,
\label{equatesandE}
\end{equation}
for almost every $\bx$ in $D$ with respect to $2$-dimensional Lebesgue measure $\mathcal{L}^2$. 
The symmetric part of the distributional derivative of $\bu$, $E \bu=1/2(\nabla \bu+\nabla \bu^T)$ for $SBD(D)$ functions is a $2\times 2$ matrix valued Radon measure with absolutely continuous part with respect to two dimensional Lesbesgue measure described by the density $\mathcal{E}\bu$ and singular part described by the jump set \cite{AmbrosioCosicaDalmaso} and
\begin{eqnarray}
\langle E u,\Phi\rangle=\int_D\,\sum_{i,j=1}^d\mathcal{E}u_{ij}\Phi_{ij}\,d\bx+\int_{\mathcal{J}_{u}}\,\sum_{i,j=1}^d(\bu^+_i - \bu^-_i)\bn_j\Phi_{ij}\,d\mathcal{H}^{1},
\label{distderiv}
\end{eqnarray}
for every continuous, symmetric matrix valued test function $\Phi$. In the sequel we will write $[\bu]=\bu^+-\bu^-$.

The limit dynamics and LEFM energy are expressed in terms of elastic moduli $\lambda$ and $\mu$ and fracture toughness $\mathcal{G}$. These are calculated directly from the nonlocal potential \eqref{tensilepot}. Here we have taken the choice $\Psi(r)=h(r^2)$ and the elastic moduli are given by
\begin{equation}\label{calibrate1}
\mu=\lambda=M\frac{1}{4} h'(0) \,, 
\end{equation}
where the constant $M=\int_{0}^1r^{2}J(r)dr$. The elasticity tensor is given by
\begin{equation}\label{tensor}
\begin{aligned}
\mathbb{C}_{ijkl}=2\mu\left(\frac{\delta_{ik}\delta_{jl}+\delta_{il}\delta_{jk}}{2}\right)+\lambda\delta_{ij}\delta_{kl},
\end{aligned}
\end{equation}
and
\begin{eqnarray}
\mathcal{G}_c=\, \frac{4}{\pi}\int_{0}^1h(S_+)r^2J(r)dr.
\label{epsilonfracttough}
\end{eqnarray}

The limit evolution has a bounded Griffith surface energy and elastic energy given by 
\begin{eqnarray}
\int_{D}\,\mu |\mathcal{E} \bu^0(t)|^2+\frac{\lambda}{2} |{\rm div}\,\bu^0(t)|^2\,d\bx+\mathcal{G}\mathcal{H}^1(\mathcal{J}_{\bu^0(t)})\leq C, 
\label{LEFMbound}
\end{eqnarray}
for $0\leq t\leq T$,
where $\mathcal{J}_{\bu^0(t)}$ denotes the evolving jump set inside the domain $D$, across which the  displacement $\bu^0$ has a jump discontinuity and  $\mathcal{H}^1$ is one dimensional Hausdorff measure, see \cite{CMPer-Lipton3} and \cite{CMPer-Lipton}. 
Because $\bu^0$ has bounded energy \eqref{LEFMbound} we see that $\bu^0$ also belongs to $SBD^2(D)$. Here $SBD^2(D)$ is the set of $SBD(D)$ functions with square integrable strain $\mathcal{E}\bu$ and jump set with bounded $\mathcal{H}^1$ measure. It has been recently shown in \cite{ContiFocardiIurlano} that for $\bu \in SBD^2(D)$ that 
\begin{equation}\label{reg}
\mathcal{H}^1(S_{\bu}\setminus \mathcal{J_{\bu}})=0.
\end{equation}
It is remarked that the equality $\lambda=\mu$  appearing in \eqref{calibrate1} is a consequence of the central force nature of the nonlocal interaction mediated by \eqref{tensilepot}.   While non-central force potentials can deliver a larger class of energy-volume-shape change relations  \cite{States} a central force potential is been chosen  to illustrate the ideas. 

The symmetry of the limit displacement $\bu^0$ as an element of $SBD^2(D)$ follows from the symmetry of $\bu^\epsilon$. 
\begin{theorem}\label{symmetry2}
The displacement $\bu^0$ is in $ SBD^2(D)$ for a.e. $t\in (0,T)$ and its first component denoted by $u^0_1(x_1,x_2)$ is even with respect to the $x_2=0$ axis and the second component of the displacement denoted by $u^0_2(x_1,x_2)$ is odd with respect to the $x_2=0$ axis and $u^0_2(x_1,x_2)=0$, $\mathcal{H}^1$ a.e. for $\{\ell^0(t)< x_1< a,\, x_2=0\}$. The jump set of $\bu^0$ is contained inside the crack $\Gamma_t$,  $t\in[0,T]$.
\end{theorem}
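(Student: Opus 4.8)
The plan is to take the $SBD^2$ regularity of the limit as inherited from the earlier convergence analysis and to devote the real work to the parity statements, the jump-set inclusion, and the vanishing of $u_2^0$ on the uncracked ligament. Concretely, under the extra bound \eqref{linfty} the facts that $\bu^0(t)\in SBD^2(D)$ for a.e.\ $t$, that the energy bound \eqref{LEFMbound} holds, and that $\mathcal{H}^1(S_{\bu^0(t)}\setminus\mathcal{J}_{\bu^0(t)})=0$ (by \eqref{reg}) are precisely what the limit analysis of \cite{CMPer-Lipton3,CMPer-Lipton} supplies; I would simply quote these. It then remains to establish the symmetry and to locate the jump set.

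For the parity, let $R(x_1,x_2)=(x_1,-x_2)$. Since $D$ is symmetric about $\{x_2=0\}$ and $R$ preserves two-dimensional Lebesgue measure, the maps $\bw\mapsto w_1\circ R$ and $\bw\mapsto -w_2\circ R$ are linear isometries of $L^2(D;\bbR^2)$. By the symmetric construction of the nonlocal problem recorded in Section~\ref{failurezone}, one has $u_1^{\epsilon_n}(\cdot,t)\circ R=u_1^{\epsilon_n}(\cdot,t)$ and $-u_2^{\epsilon_n}(\cdot,t)\circ R=u_2^{\epsilon_n}(\cdot,t)$ for every $\epsilon_n$ and $t$. I would pass to the limit in these identities using the strong convergence $\bu^{\epsilon_n}\to\bu^0$ in $C([0,T];\dot L^2(D;\bbR^2))$ from Lemma~\ref{convergences} together with the continuity of the two isometries, concluding that $u_1^0(x_1,x_2)$ is even and $u_2^0(x_1,x_2)$ is odd in $x_2$.

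For the jump set, fix $\bx_0\in D\setminus\Gamma_t$. Because $\ell^{\epsilon_n}(t)\to\ell^0(t)$, the softening-zone centerlines $S^{\epsilon_n}(t)$ of \eqref{sc} shrink onto $\Gamma_t$, so I can pick $r>0$ with $B:=B_r(\bx_0)\subset D\setminus\Gamma_t$ and $N$ such that for $n\ge N$ no pair $(\bx,\by)$ with $\bx,\by\in B$ and $|\by-\bx|<\epsilon_n$ lies in $SZ^{\epsilon_n}(t)$ (the connecting segment cannot reach $S^{\epsilon_n}(t)$). For such pairs $|S(\by,\bx,\bu^{\epsilon_n}(t))|<S_c$ by \eqref{decomposedetailsS3}, so $\Psi$ is sampled only on its convex branch, where $\Psi(r)\ge c\,r^2$ for $|r|\le r^c$ (using $\Psi(0)=\Psi'(0)=0$ and convexity). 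Feeding this lower bound into the $n$-uniform bound on the potential energy $PD^{\epsilon_n}(\bu^{\epsilon_n}(t))$ (energy identity together with \eqref{bounds}) yields an $n$-uniform bound on the nonlocal Gagliardo-type quantity $\int_B\!\int_{B\cap\mathcal{H}_{\epsilon_n}(\bx)}\frac{J^{\epsilon_n}(|\by-\bx|)}{\epsilon_n^{3}\omega_2}|\by-\bx|\,|S(\by,\bx,\bu^{\epsilon_n}(t))|^{2}\,d\by\,d\bx$. I would then invoke the nonlocal compactness theorems for this class of functionals (as used in \cite{CMPer-Lipton,CMPer-Mengesha2}) to conclude $\bu^0(t)|_B\in H^1(B;\bbR^2)$, hence $\mathcal{J}_{\bu^0(t)}\cap B=\emptyset$; since $\bx_0\in D\setminus\Gamma_t$ was arbitrary, $\mathcal{J}_{\bu^0(t)}\subset\Gamma_t$, and $\Gamma_\tau\subset\Gamma_t$ for $\tau<t$ gives the stated family.

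Finally, for the ligament statement, take $x_1\in(\ell^0(t),a)$ with $(x_1,0)\in D$; then $(x_1,0)\notin\Gamma_t$, so $(x_1,0)\notin\mathcal{J}_{\bu^0(t)}$ by the inclusion just proved, and by \eqref{reg} for $\mathcal{H}^1$-a.e.\ such $x_1$ the point $(x_1,0)$ is one of approximate continuity of $\bu^0(t)$. Since $R$ maps $\mathcal{H}_\epsilon(x_1,0)$ onto itself and $u_2^0$ is odd, $\int_{\mathcal{H}_\epsilon(x_1,0)}u_2^0(\by,t)\,d\by=0$ for all $\epsilon>0$; dividing by $\omega_2\epsilon^2$ and letting $\epsilon\searrow0$, the approximate-continuity relation \eqref{approx} identifies the limit of these averages with the Lebesgue value of $u_2^0$ there, forcing $u_2^0(x_1,0,t)=0$ for $\mathcal{H}^1$-a.e.\ $x_1$ on the ligament. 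The hard part is the jump-set step: converting the bare subcriticality of the nonlocal strains off the softening zone into a genuine $H^1$ bound for the limit requires the compactness machinery for the double-well energy, together with the precise $O(\epsilon)$ geometry of $SZ^{\epsilon_n}(t)$ built into Hypothesis~\ref{hyp1}; the parity and ligament steps are essentially formal once that is in hand.
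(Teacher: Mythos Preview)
Your proposal is correct, and on the parity and ligament claims it coincides with the paper's argument (the paper extracts an a.e.\ convergent subsequence rather than phrasing the reflection as an $L^2$ isometry, but that is cosmetic; the ligament step via \eqref{reg} and approximate continuity is identical).

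The one genuinely different ingredient is how you locate the jump set. The paper does \emph{not} argue ball by ball. Instead it proves a global trace formula (Lemma~\ref{measures}): it splits the nonlocal strain into its sub- and super-critical parts $S^-,S^+$, identifies the weak $L^2$ limit of $S^-$ with $\mathcal{E}\bu^0\be\cdot\be$, and by subtraction shows that for every smooth $\varphi$ compactly supported in $D$,
\[
\lim_{\epsilon_n\to 0}\frac{1}{\epsilon_n^2\omega_2}\int_{SZ^{\epsilon_n}}\!\int_{\mathcal{H}_{\epsilon_n}(\bx)\cap D}\frac{|\by-\bx|}{\epsilon_n}J^{\epsilon_n}(|\by-\bx|)S(\by,\bx,\bu^{\epsilon_n}(t))^+\,d\by\,\varphi(\bx)\,d\bx
=C\int_{\mathcal{J}_{\bu^0(t)}}[\bu^0]\cdot\bn\,\varphi\,d\mathcal{H}^1.
\]
Since by \eqref{decomposedetailsS3} the left-hand side is carried by $SZ^{\epsilon_n}(t)$, which under Hypothesis~\ref{hyp1} collapses onto $\Gamma_t$, testing with $\varphi$ supported away from $\Gamma_t$ kills the jump integral there, and the inclusion $\mathcal{J}_{\bu^0(t)}\subset\Gamma_t$ follows. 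Your local $H^1$-compactness argument on balls $B\subset D\setminus\Gamma_t$ is a legitimate alternative and is arguably cleaner for the bare inclusion; it sidesteps the full trace identity and goes straight from the uniform subcriticality on $B$ to $\bu^0|_B\in H^1$. The paper's route, on the other hand, yields extra information---an explicit representation of the jump measure as the limit of the softening-zone data---which it reuses later when passing to limits in the weak formulation (Sections~\ref{s:proofofconvergence} and~\ref{s:weaksolnproof}). So your approach buys directness for this theorem, while the paper's buys a formula that does double duty downstream.
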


The global description of  $\ddot\bu^0(t)$ can be further specified in terms of suitable Sobolev spaces posed over time dependent domains.
For  $\ell(0)=\ell^0(0)<\ell^0(t)$  monotonicity implies $0<t<T$.  
We choose $0\leq\beta<\ell^0(t)-\ell(0)$ and introduce $ D_\beta(t)=D\setminus\{\ell(0)\leq x_1\leq\ell^0(t)-\beta;\,x_2=0\}$. It is evident that $D_\beta(t)\subset D_t$ and its boundary is denoted by $\partial D_\beta(t)$. The subsets of the boundary $\partial D_\beta{(t)}$ bordering the domains $\{\bx\in D_\beta(t):\, \pm x_2\geq 0\}$ are denoted by $\partial D_{\beta}^\pm(t)$.  
The layer $L^+_\beta(t)$  adjacent to $\partial D^+_\beta(t)$ is defined to be the region inside the solid and dashed contours portrayed in figure \ref{L-upper}. The dashed contour interior to $D_\beta(t)$ is denoted by $\partial L^+$.
For $0<t<T$ set
\begin{equation}\label{cracklayer}
W^+(D_\beta(t))=\left\{\bw\in H^1(L^+_\beta(t),\mathbb{R}^2)\text{ and }\gamma \bw =0 \text{ on }  \partial L^+,\, \bw\,\,\hbox{extended by $0$ to $D_\beta(t)$}\right\},
\end{equation}
here $\gamma$ is the trace operator mapping functions in $H^1(L^+_\beta(t),\mathbb{R}^2)$ to functions defined on the boundary.  The Hilbert space dual to $W^+(D_\beta(t))$ is denoted by $W^+(D_\beta(t))'$. We introduce the layer $L^-_\beta(t)$ adjacent to the boundary $\partial D_\beta^-(t)$ and the boundary of the layer internal to $D_\beta(t)$ is denoted by $\partial L^-$.
The analogous space $W^-(D_\beta(t))$ is given by
\begin{equation}\label{cracklayerlower}
W^-(D_\beta(t))=\left\{\bw\in H^1(L^-_\beta(t),\mathbb{R}^2)\text{ and }\gamma \bw =0 \text{ on }  \partial L^-,\, \bw\,\,\hbox{extended by $0$ to $D_\beta(t)$}\right\},
\end{equation}
with dual  $W^-(D_\beta(t))'$.

For any $\tau\in(0,T)$ let $\bu^0_\tau$ be the restriction of $\bu^0$ to $\tau<t<T$. 
Then we have the following theorem.
\begin{theorem}\label{limitspace}   For all $\tau\in(0,T)$, $\ddot\bu_\tau^0(\bx,t)$ belongs to $W^{\pm}(D_\beta(\tau))'$ for almost all $t\in(\tau,T)$ and
\begin{equation}\label{convgencessrestrict}
\begin{aligned}
\ddot\bu^{\epsilon_n}\rightharpoonup \ddot\bu^0_\tau &\hbox{ \rm weakly in }L^2(\tau,T;W^\pm(D_\beta(\tau))').
\end{aligned}
\end{equation}
\end{theorem}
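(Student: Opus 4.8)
\medskip\noindent\textbf{Proof plan.} The plan is to test the strong form \eqref{energy based model2} of the nonlocal equation of motion against fields $\bw\in W^{\pm}(D_\beta(\tau))$, extended by zero to $D$, and to establish the uniform bound
\[
|\langle\ddot\bu^{\epsilon_n}(t),\bw\rangle|=\Bigl|\int_D\mathcal{L}^{\epsilon_n}(\bu^{\epsilon_n}(t))\cdot\bw\,d\bx+\langle\bb^{\epsilon_n}(t),\bw\rangle\Bigr|\le C\,\Vert\bw\Vert_{H^1(D;\mathbb{R}^2)}
\]
with $C$ independent of $\epsilon_n$ and of $t\in(\tau,T)$, for all $n$ large. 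Granting this, $\{\ddot\bu^{\epsilon_n}\}$ is bounded in $L^2(\tau,T;W^{\pm}(D_\beta(\tau))')$, a subsequence converges weakly there, and the limit is identified with $\ddot\bu^0_\tau$; uniqueness of the limit then promotes this to convergence of the whole sequence $\{\epsilon_n\}$ and, in particular, shows $\ddot\bu^0_\tau(t)\in W^{\pm}(D_\beta(\tau))'$ for a.e.\ $t\in(\tau,T)$.

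The decisive point is geometric. Since $\bw$ is supported in the collar $L^{\pm}_\beta(\tau)$ whose inner boundary $\partial L^{\pm}$ lies in the interior of $D_\beta(\tau)$, the part of $\mathrm{supp}\,\bw$ adjacent to the axis $\{x_2=0\}$ has $x_1\le\ell^0(\tau)-\beta$, while the remainder stays within a fixed distance of the part of $\partial D$ away from the axis. Using $\ell^{\epsilon_n}(\tau)\to\ell^0(\tau)$ and the monotonicity $\ell^{\epsilon_n}(t)\ge\ell^{\epsilon_n}(\tau)$ for $t\ge\tau$, one checks, with the definitions \eqref{center}, \eqref{failure}, \eqref{soften} and figure \ref{L-upper}, that for $n$ large and every $t\in(\tau,T)$ each bond $(\bx,\by)$ with $\bx\in\mathrm{supp}\,\bw$ and $|\by-\bx|<\epsilon_n$ either lies in the failure zone $FZ^{\epsilon_n}(t)$, where ${\bolds{f}}^{\epsilon_n}(\bx,\by)=0$ and the pair contributes nothing, or lies outside the softening zone $SZ^{\epsilon_n}(t)$, where $|S(\by,\bx,\bu^{\epsilon_n}(t))|<S_c$; no softening-but-unbroken pair is charged against $\bw$. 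The body-force term is handled separately: on $\mathrm{supp}\,\bw$, $\bb^{\epsilon_n}$ reduces to the single boundary layer adjacent to $\partial D^{\pm}_\beta(\tau)$, and Lemma \ref{convergingrhs} gives $|\langle\bb^{\epsilon_n}(t),\bw\rangle|\le C\Vert\bw\Vert_{H^1(D)}$.

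The core estimate is then the bound on $\int_D\mathcal{L}^{\epsilon_n}(\bu^{\epsilon_n}(t))\cdot\bw\,d\bx$, which by \eqref{energy based weakform} equals, up to sign, $\int_D\int_{\mathcal{H}_{\epsilon_n}(\bx)\cap D}|\by-\bx|\,\partial_S\mathcal{W}^{\epsilon_n}(S(\by,\bx,\bu^{\epsilon_n}(t)))\,S(\by,\bx,\bw)\,d\by\,d\bx$. Discarding the failure-zone bonds and using $|S|<S_c$ on the survivors, the identity $\Psi(r)=h(r^2)$ gives the pointwise bound $||\by-\bx|\,\partial_S\mathcal{W}^{\epsilon_n}(S)|\le C\,\epsilon_n^{-3}J^{\epsilon_n}(|\by-\bx|)\,|\by-\bx|\,|S|$, with $C$ depending only on $\sup_{[0,({r}^c)^2]}|h'|$. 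A nonlocal Cauchy--Schwarz inequality then bounds the integral by $C\epsilon_n^{-3}$ times the product of $\bigl(\int_D\int_{\mathcal{H}_{\epsilon_n}(\bx)}J^{\epsilon_n}|\by-\bx|\,|S(\by,\bx,\bu^{\epsilon_n})|^2\,d\by\,d\bx\bigr)^{1/2}$, which is $O(\epsilon_n^{3/2})$ by the uniform peridynamic energy bound $PD^{\epsilon_n}(\bu^{\epsilon_n}(t))\le C$ (Gronwall's inequality, as in \cite{CMPer-Lipton}, together with the concavity estimate $h(r)\ge c\,r$ on $[0,({r}^c)^2]$, valid since $h(0)=0<h'(0)$), and $\bigl(\int_D\int_{\mathcal{H}_{\epsilon_n}(\bx)}J^{\epsilon_n}|\by-\bx|^{-1}|\bw(\by)-\bw(\bx)|^2\,d\by\,d\bx\bigr)^{1/2}\le C\epsilon_n^{3/2}\Vert\nabla\bw\Vert_{L^2(D)}$, the latter from a Bourgain--Brezis--Mironescu / Mengesha--Du estimate for the suitably normalized kernel $\epsilon_n^{-3}J^{\epsilon_n}(|\by-\bx|)|\by-\bx|$. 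The three powers of $\epsilon_n$ cancel exactly, giving $|\int_D\mathcal{L}^{\epsilon_n}(\bu^{\epsilon_n}(t))\cdot\bw\,d\bx|\le C\Vert\bw\Vert_{H^1(D)}$ uniformly in $n$ and $t$, hence the claimed bound and the boundedness of $\{\ddot\bu^{\epsilon_n}\}$ in $L^2(\tau,T;W^{\pm}(D_\beta(\tau))')$.

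To identify the weak limit $\xi$ of a subsequence of $\{\ddot\bu^{\epsilon_n}\}$ in $L^2(\tau,T;W^{\pm}(D_\beta(\tau))')$, I would take $\phi\in C_c^\infty(\tau,T)$ and $\bw\in W^{\pm}(D_\beta(\tau))$, integrate $\int_\tau^T\langle\ddot\bu^{\epsilon_n}(t),\bw\rangle\phi(t)\,dt$ by parts twice in $t$ (legitimate since $\bu^{\epsilon_n}\in C^2([0,T];\dot L^2(D;\mathbb{R}^2))$) to obtain $\int_\tau^T\bigl(\int_D\bu^{\epsilon_n}(t)\cdot\bw\,d\bx\bigr)\ddot\phi(t)\,dt$, and let $n\to\infty$ using the strong convergence $\bu^{\epsilon_n}\to\bu^0$ in $C([0,T];\dot L^2(D;\mathbb{R}^2))$ of Lemma \ref{convergences}; the resulting limit $\int_\tau^T\bigl(\int_D\bu^0(t)\cdot\bw\,d\bx\bigr)\ddot\phi(t)\,dt$ equals $\int_\tau^T\langle\ddot\bu^0_\tau(t),\bw\rangle\phi(t)\,dt$ by the definition of the distributional time derivative, forcing $\xi=\ddot\bu^0_\tau$. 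I expect the main obstacle to be the core estimate, specifically (a) pinning down the geometry so that $\mathrm{supp}\,\bw$ meets the crack set only inside the failure zone, so that the nonlinear/softening contributions are genuinely absent rather than merely small, and (b) the $\epsilon_n$-bookkeeping in the nonlocal Cauchy--Schwarz, which closes only because the uniform energy bound furnishes exactly one factor $\epsilon_n^{3/2}$ and the nonlocal Korn/Poincar\'e inequality for $\bw$ the other; the remaining steps (time integration by parts, subsequence extraction, limit identification) are routine.
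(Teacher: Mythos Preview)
Your overall strategy matches the paper's: test the strong form against $\bw\in W^\pm(D_\beta(\tau))$, establish a uniform bound $|\langle\ddot\bu^{\epsilon_n}(t),\bw\rangle|\le C\|\bw\|$ independent of $n$ and $t\in(\tau,T)$, extract a weakly convergent subsequence in $L^2(\tau,T;W^\pm(D_\beta(\tau))')$, and identify the limit as $\ddot\bu^0_\tau$ by integration by parts in time. The geometric observation---that for $n$ large every bond touching $\mathrm{supp}\,\bw$ is either in $FZ^{\epsilon_n}(t)$ (force zero) or outside $SZ^{\epsilon_n}(t)$ (subcritical strain)---is exactly what the paper exploits.

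There is, however, a real gap in your core estimate. After discarding failure-zone bonds you apply Cauchy--Schwarz and bound the $\bw$ factor by the \emph{full} nonlocal seminorm
\[
\Bigl(\int_D\int_{\mathcal{H}_{\epsilon_n}(\bx)}J^{\epsilon_n}(|\by-\bx|)\,|\by-\bx|^{-1}|\bw(\by)-\bw(\bx)|^2\,d\by\,d\bx\Bigr)^{1/2},
\]
invoking a Bourgain--Brezis--Mironescu/Mengesha--Du inequality. But $\bw$, extended by zero to $D$, is \emph{not} in $H^1(D;\mathbb{R}^2)$: its trace on the upper crack face $\{\ell(0)\le x_1\le\ell^0(\tau)-\beta,\,x_2=0^+\}$ need not vanish, so $\bw$ jumps across that segment. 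Crack-crossing pairs then contribute $O(\epsilon_n^2)$ to the integral above (jump of size $O(1)$, pair set of measure $O(\epsilon_n^3)$, divided by $|\by-\bx|\sim\epsilon_n$), hence $O(\epsilon_n)$ after the square root, not $O(\epsilon_n^{3/2})$; your $\epsilon_n^{-3}\cdot\epsilon_n^{3/2}\cdot\epsilon_n^{3/2}$ balance collapses to $O(\epsilon_n^{-1/2})$. The paper avoids this by carrying an indicator $\chi(\bx,\bx+\epsilon_n\xi)$ (zero when the pair is separated by the segment $\{0\le x_1\le\ell^0(\tau)-\beta,\,x_2=0\}$) through the Cauchy--Schwarz, and then bounding the \emph{restricted} $\bw$-seminorm by a slicing argument: on each admissible segment $\bw\in H^1(D_\beta(\tau))$ is absolutely continuous, so $D_\be^{\epsilon_n|\xi|}\bw\cdot\be=\int_0^1\mathcal{E}\bw(\bx+s\epsilon_n|\xi|\be)\be\cdot\be\,ds$, and Jensen plus Fubini yield $\|\mathcal{E}\bw\|_{L^2(D_\beta(\tau))}$. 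A black-box BBM on the slit domain $D_\beta(\tau)$ is not directly available because that domain is not Lipschitz; the slicing/line-integral device is what replaces it.

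Your limit identification via two time integrations by parts and the strong convergence $\bu^{\epsilon_n}\to\bu^0$ in $C([0,T];\dot L^2(D;\mathbb{R}^2))$ is a legitimate alternative to the paper's single integration by parts combined with the weak convergence $\dot\bu^{\epsilon_n}\rightharpoonup\dot\bu^0$ in $L^2(0,T;\dot L^2(D;\mathbb{R}^2))$.
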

Since $\ddot\bu^0_\tau$ belongs to $W^{\pm}(D_\beta(\tau))'$  we introduce the 
the normal traction $\mathbb{C}\,\mathcal{E}\bu^0\bn$  defined on the crack lips for $(\tau,T)$ and $\partial D$ in the generalized sense \cite{McLean}.  In order to describe the generalized traction we introduce trace spaces compatible with the crack geometry.
For $t\in[0,t]$ we introduce the weight defined on $\partial D_{\beta}^\pm(t)$ given by
\begin{align}\label{weight}
\alpha_\pm(x_1,x_2,\beta)= \begin{cases}
\min\{1,\sqrt{(\ell^0(t)-\beta-x_1)}\}, &\qquad \text{on } x_2=0 \\
\min\{1,\sqrt{\pm x_2}\}, &\qquad \text{on } x_1=a,\,\,\pm x_2>0\\
1,&\qquad \text{otherwise} .
\end{cases}
\end{align}
and the trace spaces $H_{00}^{1/2}(\partial D_{\beta}^\pm(t))^2$ given in \cite{LM} are defined by all  functions $\bw$ in $H^{1/2}(\partial D_{\beta}^\pm(t))^2$ with
\begin{equation}
\int_{\partial D_{\beta,t}^\pm}|\bw(\bx)|^2\alpha_\pm^{-1}(\bx,\beta)ds<\infty.
\end{equation}
The dual to $H_{00}^{1/2}(\partial D_{\beta}^\pm(t))^2$ is ${H}_{00}^{-1/2}(\partial D_{\beta}^\pm(t))^2$. This type of trace space is employed for problems of mechanical contact in \cite{KO}, see also \cite{Schwab}. The trace operator $\gamma$ is a continuous linear map from $W^\pm(D_\beta(t))$ onto ${H}_{00}^{1/2}(\partial D_{\beta}^\pm(t))^2$, see \cite{LM}. Additionally the trace operator $\gamma$ is a continuous linear map from $H^{1}(D,\mathbb{R}^2)$ onto $H^{1/2}(\partial D)^2$. 

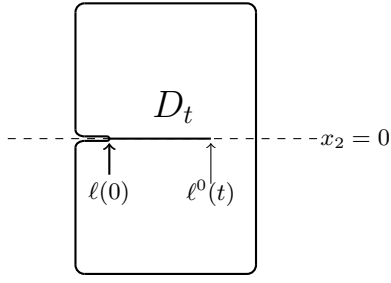
\begin{figure} 
\centering
\begin{tikzpicture}[xscale=0.60,yscale=0.60]

%
%
\draw [-,thick] (-1.80,0.05) -- (-1.3,0.05);

\draw [-,thick] (-1.80,-0.05) -- (-1.3,-0.05);


\draw [-,thick] (-1.3,0.05) to [out=0, in=90] (-1.25 ,0);

\draw [-,thick] (-1.25,0) to [out=-90, in=0] (-1.3 ,-0.05);


\draw [-,thick] (-2,0.2) to [out=-90, in=180] (-1.8 ,0.05);

\draw [-,thick] (-2,-0.2) to [out=90, in=180] (-1.8 ,-0.05);

\draw [-,thick] (-2,2.8) to [out=90, in=180] (-1.8 ,3.0);

\draw [-,thick] (-2,-2.8) to [out=-90, in=180] (-1.8 ,-3.0);

\draw [-,thick] (1.8,3.0) to [out=0, in=90] (2.0 ,2.8);

\draw [-,thick] (1.8,-3.0) to [out=0, in=-90] (2.0 ,-2.8);
%
%
%
\draw [-,dashed] (-3.5,0.0) -- (3.5,0.0);

\node  at (4.2,0.0) {\small $x_2=0$};
%
%

\draw [-,thick] (-1.25,0) -- (1.0,0);

\draw [->] (1.0,-1.0) -- (1.0,-0.1);

\node  at (1.0,-1.2) {\small $\ell^0(t)$};

\draw [<-,thick] (-1.25,-0.1) -- (-1.25,-0.8);

\node  at (-1.25,-1.2) {\small $\ell(0)$};


%
%
\draw [-,thick] (-2,2.8) -- (-2,0.2);
\draw [-,thick] (-2,-2.8) -- (-2,-0.2);
\draw [-,thick] (-1.8,-3) -- (1.8,-3);
\draw [-,thick] (2,2.8) -- (2,-2.8);
\draw [-,thick] (1.8,3) -- (-1.8,3);
%
%
%
%
\node [right] at (-0.5,0.7) {{\Large $D_t$}};

\end{tikzpicture} 
\caption{{ \bf Single-edge-notch and crack corresponding to $\epsilon =0$ limit.}}
 \label{target}
\end{figure}

In what follows the duality bracket for Hilbert spaces $H$ and their dual $H'$ is defined by $\langle\cdot,\cdot\rangle$, where the first argument is an element of $H'$ and the second an element of $H$. 
The generalized traction   $\mathbb{C}\,\mathcal{E}\bu^0\bn$ on $\partial D$ is introduced as an element of $H^{-1/2}(\partial D)^2$. For this case we have suitable integration by parts formulas given by the following two lemmas. 
\begin{lemma}\label{generaltractbdry}
Since $\ddot \bu^0$ belongs to $H^1(D,\mathbb{R}^2)'$ and $\bu^0$ is in $SBD^2(D)$ then the generalized traction $\mathbb{C}\,\mathcal{E}\bu^0\bn$  is uniquely defined as an  element of ${H}^{-1/2}(\partial D)^2$ on the boundary $\partial D$ is given by
\begin{equation}\label{generalt1}
\langle \mathbb{C}\,\mathcal{E}\bu^0\bn,\gamma\bw\rangle=\int_{D}\mathbb{C}\,\mathcal{E}\bu^0:\mathcal{E}{\bw}\,d\bx+\rho\langle\ddot\bu^0,\bw\rangle,
\end{equation}
for all test functions $\bw$ in $H^1(D,\mathbb{R}^2)$
is uniquely defined. 
\end{lemma}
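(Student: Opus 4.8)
The plan is to \emph{define} the generalized traction $\mathbb{C}\,\mathcal{E}\bu^0\bn$ by the right-hand side of \eqref{generalt1}, and then verify that this prescription is a bounded linear functional of $\gamma\bw$ that is independent of the particular extension $\bw\in H^1(D,\mathbb{R}^2)$ chosen for a given boundary datum $\gamma\bw\in H^{1/2}(\partial D)^2$. First I would fix $t$ (suppressed below) and consider the bilinear form
\[
a(\bu^0,\bw):=\int_D \mathbb{C}\,\mathcal{E}\bu^0:\mathcal{E}\bw\,d\bx+\rho\langle\ddot\bu^0,\bw\rangle,\qquad \bw\in H^1(D,\mathbb{R}^2).
\]
Since $\bu^0\in SBD^2(D)$ has square-integrable approximate strain $\mathcal{E}\bu^0$ (from the energy bound \eqref{LEFMbound}), the first term is bounded by $C\Vert\mathcal{E}\bu^0\Vert_{L^2(D)}\Vert\mathcal{E}\bw\Vert_{L^2(D)}\le C'\Vert\bw\Vert_{H^1(D,\mathbb{R}^2)}$; and since $\ddot\bu^0(t)\in \dot H^1(D;\mathbb{R}^2)'$ by Lemma \ref{convergences}, the second term is bounded by $\rho\Vert\ddot\bu^0(t)\Vert_{\dot H^1(D;\mathbb{R}^2)'}\Vert\bw\Vert_{H^1(D,\mathbb{R}^2)}$ after splitting off the rigid-motion component (which is annihilated by both terms: $\mathcal{E}\bw=0$ for rigid $\bw$, and $\ddot\bu^0\in\dot H^1{}'$ pairs to zero with rigid motions). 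Hence $a(\bu^0,\cdot)$ is a bounded linear functional on $H^1(D,\mathbb{R}^2)$.

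Next I would show $a(\bu^0,\bw)=0$ whenever $\gamma\bw=0$ on $\partial D$, i.e. whenever $\bw\in H^1_0(D,\mathbb{R}^2)$. This is precisely the statement that $\bu^0$ satisfies the elastodynamic equation $\rho\ddot\bu^0=\operatorname{div}(\mathbb{C}\,\mathcal{E}\bu^0)$ weakly away from the crack — more carefully, that the only contributions to the distributional identity come from the jump set $\mathcal{J}_{\bu^0}\subset\Gamma_t$ and from $\partial D$, both of which are avoided by $H^1_0$ test functions supported in the interior. This equation off the crack is one of the conclusions already established for the limit $\bu^0$ (it is among the five bulleted properties in the introduction, and is proved via Lemma \ref{convergences} together with the convergence of the nonlocal force, cf. \cite{CMPer-Lipton3,CMPer-Lipton}), so I would invoke it: for $\bw\in H^1_0(D,\mathbb{R}^2)$ one has $\int_D\mathbb{C}\,\mathcal{E}\bu^0:\mathcal{E}\bw\,d\bx=-\rho\langle\ddot\bu^0,\bw\rangle$, i.e. $a(\bu^0,\bw)=0$. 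Consequently $a(\bu^0,\cdot)$ descends to a bounded linear functional on the quotient $H^1(D,\mathbb{R}^2)/H^1_0(D,\mathbb{R}^2)\cong H^{1/2}(\partial D)^2$ — using that $\gamma$ is a continuous surjection onto $H^{1/2}(\partial D)^2$ with a bounded right inverse, as recalled in the paragraph preceding the lemma — and this functional \emph{is} by definition the element $\mathbb{C}\,\mathcal{E}\bu^0\bn\in H^{-1/2}(\partial D)^2$ satisfying \eqref{generalt1}. Uniqueness is immediate: any two functionals agreeing on all $\gamma\bw$ with $\bw\in H^1(D,\mathbb{R}^2)$ agree on all of $H^{1/2}(\partial D)^2$ by surjectivity of $\gamma$.

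The main obstacle is the second step: making rigorous that the weak elastodynamic identity for $\bu^0$ holds with \emph{no} crack or boundary contribution when tested against $H^1_0(D)$ functions. One must be careful that $\bu^0$ is only in $SBD^2(D)$, so $\nabla\bu^0$ is a measure with a singular part on $\mathcal{J}_{\bu^0}$; the integration-by-parts formula \eqref{distderiv} shows that for a smooth compactly supported symmetric test field $\Phi$, $\langle E\bu^0,\Phi\rangle$ picks up a jump-set term $\int_{\mathcal{J}_{\bu^0}}[\bu^0]\cdot\Phi\bn\,d\mathcal{H}^1$. The point is that the limit equation from \cite{CMPer-Lipton} is exactly the statement that when this is combined with $\rho\langle\ddot\bu^0,\bw\rangle$ the jump term cancels against the (zero) traction on the crack lips — i.e. the crack is traction-free — so for interior test functions nothing survives. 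I would therefore structure the argument so that \eqref{generalt1} is taken as the \emph{definition} of the boundary traction, with the only nontrivial input being well-definedness, and defer the traction-free-crack content to the companion results; the lemma itself is then essentially a Lax–Milgram-style "the functional factors through the trace" argument, and I would keep the writeup at that level rather than re-deriving the limiting PDE.
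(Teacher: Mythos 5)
Your proof is correct and follows essentially the same route as the paper's. The paper factors Lemma \ref{generaltractbdry} through the abstract trace Lemma \ref{Gentracepologinal}, whose proof constructs a bounded right inverse $\tau$ of the trace operator and checks lift-independence; you instead factor the functional through the quotient $H^1(D,\mathbb{R}^2)/H^1_0(D,\mathbb{R}^2)\cong H^{1/2}(\partial D)^2$. These are the same argument packaged two ways. The one nontrivial input — that $\rho\ddot\bu^0 = {\rm div}\left(\mathbb{C}\mathcal{E}\bu^0\right)$ in $H^{-1}(D,\mathbb{R}^2)$, i.e. that the form vanishes on $H^1_0(D,\mathbb{R}^2)$ — is exactly what the paper extracts from Lemma \ref{momentumlim2} by restricting to $\bw\in H^1_0(D,\mathbb{R}^2)$ (see \eqref{momoremore2}), so your invocation of the companion variational identity is not circular; Lemma \ref{momentumlim2} is proved independently of any trace result.

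One clarification worth recording: the ``main obstacle'' you flag about a singular jump-set contribution is not in fact an obstacle. In the paper $\mathcal{E}\bu^0$ always denotes the absolutely continuous density of the symmetric distributional derivative $E\bu^0$, and by the energy bound \eqref{LEFMbound} this density is an $L^2(D)$ tensor field. The form $\int_D\mathbb{C}\,\mathcal{E}\bu^0:\mathcal{E}\bw\,d\bx$ is therefore a direct $L^2$ pairing with no measure-theoretic singular part to cancel, and Lemma \ref{momentumlim2} is obtained by passing to the $\epsilon\rightarrow0$ limit in the nonlocal balance, not by an $SBD$ integration by parts. The jump-set term in \eqref{distderiv} enters only when one pairs the full measure $E\bu^0$ against a test field, which is never done in this lemma. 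So the argument goes through exactly as you structure it, without needing any traction-free-crack cancellation at this stage.
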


\begin{lemma}\label{generaltractoncrack}
Since $\ddot \bu_\tau^0(t)$ belongs to $W^\pm(D_\beta(\tau))'$  for a.e., $t\in (\tau,T)$ and $\bu^0(t)$ is in $SBD^2(D)$ the generalized tractions  $\mathbb{C}\,\mathcal{E}\bu^0(t)\,\bn^\pm$  are uniquely defined as  elements of ${H}_{00}^{-1/2}(\partial D^\pm_{\beta}(\tau))^2$ on the upper and lower sides of the crack $\Gamma_{t}$ by
\begin{equation}\label{generalt2}
\langle \mathbb{C}\,\mathcal{E}\bu^0(t)\,\bn^\pm,\gamma\bw\rangle=\int_{L^{\pm}_\beta(\tau)}\mathbb{C}\,\mathcal{E}\bu^0(t):\mathcal{E}{\bw}\,d\bx+\rho\langle\ddot\bu_\tau^0(t),\bw\rangle,
\end{equation}
for all test functions $\bw$ in $W^\pm(D_\beta(\tau))$ and a.e., $t\in (\tau,T)$. 

\end{lemma}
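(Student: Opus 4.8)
The plan is to realise the right-hand side of \eqref{generalt2}, for a.e.\ fixed $t\in(\tau,T)$, as a bounded linear functional on $W^{\pm}(D_\beta(\tau))$ that annihilates the kernel of the trace operator $\gamma$, and then to transport it through $\gamma$ onto its range $H_{00}^{1/2}(\partial D_\beta^\pm(\tau))^2$ to obtain an element of the dual space $H_{00}^{-1/2}(\partial D_\beta^\pm(\tau))^2$; this runs parallel to the proof of Lemma \ref{generaltractbdry}. First, for $t$ such that $\ddot\bu^0_\tau(t)\in W^{\pm}(D_\beta(\tau))'$ and $\bu^0(t)\in SBD^2(D)$, put
\[
\mathcal{F}^{\pm}(t)(\bw):=\int_{L^{\pm}_\beta(\tau)}\mathbb{C}\,\mathcal{E}\bu^0(t):\mathcal{E}\bw\,d\bx+\rho\langle\ddot\bu^0_\tau(t),\bw\rangle,\qquad \bw\in W^{\pm}(D_\beta(\tau)).
\]
Since $\bu^0(t)\in SBD^2(D)$ the density $\mathcal{E}\bu^0(t)$ is square integrable on $D$, so the first term is bounded by $C\,\|\mathcal{E}\bu^0(t)\|_{L^2(D)}\,\|\bw\|_{H^1(L^{\pm}_\beta(\tau))}$, while the second is bounded by $\|\ddot\bu^0_\tau(t)\|_{W^{\pm}(D_\beta(\tau))'}\,\|\bw\|_{W^{\pm}(D_\beta(\tau))}$; hence $\mathcal{F}^{\pm}(t)\in W^{\pm}(D_\beta(\tau))'$.

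Next I would verify that $\mathcal{F}^{\pm}(t)$ vanishes on $\ker\gamma$. Because the definitions \eqref{cracklayer}--\eqref{cracklayerlower} of $W^{\pm}(D_\beta(\tau))$ already build in $\gamma\bw=0$ on $\partial L^{\pm}$, the kernel of $\gamma:W^{\pm}(D_\beta(\tau))\to H_{00}^{1/2}(\partial D_\beta^\pm(\tau))^2$ consists exactly of the zero extensions of functions $\bw\in H^1_0(L^{\pm}_\beta(\tau))$. For such a $\bw$: the layer $L^{\pm}_\beta(\tau)$ lies in the open half $\{\pm x_2>0\}$, hence is disjoint from the crack $\Gamma_t\subset\{x_2=0\}$ for every $t\in(\tau,T)$, so the zero extension of $\bw$ lies in $H^1_0(D_t)$; moreover $\gamma\bw=0$ on $\partial D$, so the limiting load $\bb^0(t)$ — which by Lemma \ref{convergingrhs} is supported on $\partial D$ — pairs to zero with $\bw$. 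Invoking the weak elastodynamic balance of linear momentum that $\bu^0$ satisfies off the crack, namely $\rho\langle\ddot\bu^0(t),\bw\rangle+\int_{D_t}\mathbb{C}\,\mathcal{E}\bu^0(t):\mathcal{E}\bw\,d\bx=\langle\bb^0(t),\bw\rangle$ for test functions supported in $D_t$ (an established property of the limit displacement for the present family of models, \cite{CMPer-Lipton3,CMPer-Lipton}; its proof passes to the limit $\epsilon_n\to 0$ in \eqref{energy based weakform} via Lemma \ref{convergences}, the strain splitting \eqref{decomposeS}--\eqref{decomposedetailsS3}, and the linearisation \eqref{calibrate1}, the super-critical part being confined to the width-$O(\epsilon_n)$ softening tube $SZ^{\epsilon_n}(t)$), we obtain $\rho\langle\ddot\bu^0(t),\bw\rangle+\int_{L^{\pm}_\beta(\tau)}\mathbb{C}\,\mathcal{E}\bu^0(t):\mathcal{E}\bw\,d\bx=0$ for a.e.\ $t$ and all $\bw\in C^\infty_c(L^{\pm}_\beta(\tau))$, and then, by continuity of $\mathcal{F}^{\pm}(t)$ and density, for all $\bw\in H^1_0(L^{\pm}_\beta(\tau))$. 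Since for such $\bw$ the pairing of $\ddot\bu^0(t)$ coincides with that of $\ddot\bu^0_\tau(t)$ and $\mathrm{supp}\,\bw\subset L^{\pm}_\beta(\tau)$, this says precisely $\mathcal{F}^{\pm}(t)(\bw)=0$.

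With these two facts the conclusion is abstract trace theory. By \cite{LM} the operator $\gamma:W^{\pm}(D_\beta(\tau))\to H_{00}^{1/2}(\partial D_\beta^\pm(\tau))^2$ is a continuous surjection, so $\mathcal{F}^{\pm}(t)$ descends to a bounded linear functional on $H_{00}^{1/2}(\partial D_\beta^\pm(\tau))^2$, i.e.\ to an element of $H_{00}^{-1/2}(\partial D_\beta^\pm(\tau))^2$, which we name $\mathbb{C}\,\mathcal{E}\bu^0(t)\,\bn^{\pm}$; by construction \eqref{generalt2} holds for every $\bw\in W^{\pm}(D_\beta(\tau))$, and this element is the only one with that property because $\gamma$ has full range, so two candidates satisfying \eqref{generalt2} would agree on all of $H_{00}^{1/2}(\partial D_\beta^\pm(\tau))^2$. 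Measurability of $t\mapsto\mathbb{C}\,\mathcal{E}\bu^0(t)\,\bn^{\pm}$ on $(\tau,T)$ is inherited from that of $t\mapsto\mathcal{E}\bu^0(t)$ in $L^2$ and $t\mapsto\ddot\bu^0_\tau(t)$ in $W^{\pm}(D_\beta(\tau))'$, the latter supplied by the weak convergence $\ddot\bu^{\epsilon_n}\rightharpoonup\ddot\bu^0_\tau$ in $L^2(\tau,T;W^{\pm}(D_\beta(\tau))')$ of Theorem \ref{limitspace}.

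I expect the second step to be the main obstacle. Two points demand care there: making the geometry of the crack layer precise — one must confirm that $L^{\pm}_\beta(\tau)$, though it abuts $\Gamma_\tau$, lies strictly on one side of $\{x_2=0\}$ uniformly in $t\in(\tau,T)$, so that the zero extension of a test function never meets the running crack $\Gamma_t$ — and pushing the nonlocal-to-local passage down to this layer, which requires showing the softening contribution, carried by the $O(\epsilon_n)$-tube $SZ^{\epsilon_n}(t)$, is asymptotically negligible while the sub-critical bonds reproduce the linear operator $\mathbb{C}\,\mathcal{E}$. This is exactly the delicate limit behind Lemma \ref{convergences} and Theorem \ref{limitspace}, now localised near the crack.
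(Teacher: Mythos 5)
Your argument is correct and follows essentially the same route as the paper's: both reduce the lemma to two ingredients, namely (i) the interior momentum balance $\rho\ddot\bu^0_\tau=\mathrm{div}(\mathbb{C}\mathcal{E}\bu^0)$ in $H^{-1}(L^\pm_\beta(\tau),\mathbb{R}^2)$ for a.e.\ $t\in(\tau,T)$, and (ii) abstract trace theory on the polygonal layer. The paper packages (ii) into Lemma \ref{Gentracepologinal}, constructing the traction via a right inverse (lift) of $\gamma$ and proving independence of the lift; you instead descend $\mathcal{F}^\pm(t)$ through $\gamma$ by checking it annihilates $\ker\gamma$ and invoking surjectivity of $\gamma:W^\pm(D_\beta(\tau))\to H^{1/2}_{00}(\partial D^\pm_\beta(\tau))^2$ — these two formulations of the quotient argument are equivalent. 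The one imprecision worth flagging is the attribution of ingredient (i): you cite \cite{CMPer-Lipton3,CMPer-Lipton} as establishing the weak elastodynamic balance off the crack, but those works do not supply the variational identity on the crack-adjacent layer for $W^\pm$ test functions. That identity is Lemma \ref{traction2} of the present paper, and its proof is new: it requires the cutoff $\chi(\bx,\bx+\epsilon_n\xi)$ reflecting that bonds crossing $\{0\leq x_1\leq\ell^0(\tau)-\beta,\,x_2=0\}$ are inside $FZ^{\epsilon_n}(t)$ and hence force-free, so that the nonlocal form may be replaced by its subcritical restriction and a one-sided slicing argument produces the $H^1(L^\pm_\beta(\tau))$ bound \eqref{est40}. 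You do correctly identify this as the crux, and your closing paragraph sketches the right picture, but in a full write-up it should be attributed to (and would require proving) Lemma \ref{traction2} rather than to the earlier references.
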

Lemmas \ref{generaltractbdry} and \ref{generaltractoncrack} are proved in section \ref{s:proofofconvergence}.

The global dynamics for $\bu^0(\bx,t)$ is given by the following theorem.
\begin{theorem}\label{momentumlim} The limit displacement field $\bu^0$ satisfies
\begin{equation}\label{mo}
\rho\ddot\bu^0=div\left(\mathbb{C}\mathcal{E}\bu^0\right) 
\end{equation}
as elements of $H^{-1}(D,\mathbb{R}^2)$, for a.e., $t\in(0,T)$ and
\begin{equation}\label{boundary tracesAB}
\begin{aligned}
\mathbb{C}\,\mathcal{E}\bu^0\bn=\bg&\hbox{ on $\partial D$},\\
\end{aligned}
\end{equation}
where the traction $\bg$ is given by \eqref{l2} and equality holds as elements of $H^{-1/2}(\partial D)^2$ for a.e., $t\in(0,T)$. Moreover there is zero traction on the upper and lower sides of the crack $\Gamma_\tau$, $\tau\in(0,T)$, this is given by
\begin{equation}\label{boundary tracesG}
\begin{aligned}
\mathbb{C}\,\mathcal{E}\bu^0(t)\bn^\pm=0,\,\,\,\hbox{for $\{\ell(0)<x_1\leq \ell^0(\tau)-\beta;\,\,x_2=0\}$}
\end{aligned}
\end{equation}
as  elements of  $H_{00}^{-1/2}(\partial D_{\beta}^\pm(\tau))^2$ for a.e., $t\in(\tau,T)$, for all $\beta\in (0,\ell^0(\tau)-\ell(0))$.
\end{theorem}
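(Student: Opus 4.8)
The plan is to pass to the limit in the nonlocal weak formulation \eqref{energy based weakform} and identify the three assertions: the bulk wave equation \eqref{mo}, the exterior traction condition \eqref{boundary tracesAB}, and the zero crack-traction condition \eqref{boundary tracesG}. First I would record the linearization of the nonlocal force. Writing $\partial_S\mathcal{W}^\epsilon$ via \eqref{derivbond} and Taylor expanding $\partial_S\Psi(\sqrt{|\by-\bx|}S)=\Psi''(0)\sqrt{|\by-\bx|}S + O(|\by-\bx|^{3/2}S^2)$, the contribution of bonds with strain below $S_c$ (the $S^-$ part in \eqref{decomposeS}) produces, after the change of variables $\by=\bx+\epsilon\bxi$ and using $M=\int_0^1 r^2 J(r)\,dr$ together with the calibration \eqref{calibrate1}, precisely the linear elastic operator $\mathrm{div}(\mathbb{C}\mathcal{E}\bu)$ with $\mathbb{C}$ as in \eqref{tensor}. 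The remaining $S^+$ part is supported in $SZ^{\epsilon}(t)$ by \eqref{decomposedetailsS3}, whose measure is $O(\epsilon)$ around the centerline $S^\epsilon(t)$; combined with the boundedness $\Vert\partial_S\Psi\Vert_\infty<\infty$ this term is negligible in the relevant dual norms. This is essentially the content of Lemma \ref{convergences} and Theorem \ref{limitspace}, so I would invoke those rather than redo the estimates.

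Next, for the bulk equation \eqref{mo}, take $\bw\in C_0^\infty(D\setminus\Gamma_T;\mathbb{R}^2)$ (hence $\bw$ vanishes near the full crack and for such $\bw$ the softening-zone term drops out for all small $\epsilon$). Pass to the limit in \eqref{energy based weakform}: the left side converges by the weak acceleration convergence in Lemma \ref{convergences} (after one integration by parts in time, or directly using $\ddot\bu^{\epsilon_n}\rightharpoonup\ddot\bu^0$), the linearized nonlocal bilinear form converges to $\int_D\mathbb{C}\mathcal{E}\bu^0:\mathcal{E}\bw\,d\bx$ by the $S(\by,\bx,\bu^0)\leftrightarrow\mathcal{E}\bu^0$ relation \eqref{equatesandE} and $L^\infty$ bound \eqref{linfty}, and the body-force term vanishes on such $\bw$ since $\bb^{\epsilon_n}$ is supported in the boundary layers disjoint from $\mathrm{supp}\,\bw$. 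This yields $\rho\ddot\bu^0=\mathrm{div}(\mathbb{C}\mathcal{E}\bu^0)$ in $H^{-1}(D,\mathbb{R}^2)$.

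For the traction identities, I would use the already-established integration-by-parts formulas. For \eqref{boundary tracesAB}: take general $\bw\in\dot H^1(D,\mathbb{R}^2)$, pass to the limit in \eqref{energy based weakform}; the right side converges, using Lemma \ref{convergingrhs}, to $\int_D\mathbb{C}\mathcal{E}\bu^0:\mathcal{E}\bw\,d\bx-\langle\bg,\bw\rangle$, while the left side gives $-\rho\langle\ddot\bu^0,\bw\rangle$. Comparing with the definition \eqref{generalt1} of the generalized traction in Lemma \ref{generaltractbdry} forces $\langle\mathbb{C}\mathcal{E}\bu^0\bn,\gamma\bw\rangle=\langle\bg,\bw\rangle$ for all $\bw$, i.e. equality in $H^{-1/2}(\partial D)^2$. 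For \eqref{boundary tracesG}: fix $\tau\in(0,T)$ and $\beta\in(0,\ell^0(\tau)-\ell(0))$ and restrict to test functions $\bw\in W^+(D_\beta(\tau))$ (and likewise $W^-$), extended by zero. The key geometric point is that $\ell^{\epsilon_n}(t)\to\ell^0(t)$ pointwise and monotonicity give, for $n$ large and $t\ge\tau$, that the softening zone centerline $S^{\epsilon_n}(t)=\{\ell(0)\le x_1\le\ell^{\epsilon_n}(t)+C\epsilon_n\}$ still leaves the sub-layer $\{\ell^0(\tau)-\beta\le x_1\}$... more precisely the bonds crossing $\{\ell(0)<x_1\le\ell^0(\tau)-\beta,\ x_2=0\}$ lie in $FZ^{\epsilon_n}(t)$, where by \eqref{failure} the force is exactly zero, so these bonds contribute nothing to the nonlocal bilinear form; hence passing to the limit against such $\bw$, no boundary term from the crack lips appears, and comparison with \eqref{generalt2} of Lemma \ref{generaltractoncrack} yields $\langle\mathbb{C}\mathcal{E}\bu^0(t)\bn^\pm,\gamma\bw\rangle=0$ for all $\bw\in W^\pm(D_\beta(\tau))$, i.e. the stated vanishing in $H_{00}^{-1/2}(\partial D_\beta^\pm(\tau))^2$. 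Here one uses Theorem \ref{symmetry2} to know $\mathcal{J}_{\bu^0(t)}\subset\Gamma_t$ so that $\bu^0(t)\in H^1(L^\pm_\beta(\tau))$ and the restriction makes sense, and $\int_{L^\pm}$ appearing in \eqref{generalt2} matches the limit of the nonlocal energy restricted to the layer.

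The main obstacle I expect is the careful bookkeeping near the moving crack tip, i.e. justifying that the softening-zone/process-zone contribution (the $S^+$ term) genuinely vanishes in the $W^\pm(D_\beta(\tau))'$ duality as $\epsilon_n\to 0$ uniformly for $t\in(\tau,T)$, and that the choice of the cut-off $\beta>0$ keeps the relevant bonds strictly inside the failure zone $FZ^{\epsilon_n}(t)$ for all large $n$. This requires combining the $O(\epsilon)$ measure estimate on $SZ^{\epsilon}\setminus FZ^{\epsilon}$ with the $\sqrt{|\by-\bx|}$-weighted trace on $H_{00}^{1/2}$ (the weight $\alpha_\pm$ in \eqref{weight} is exactly designed to absorb the degeneration at the tip), and a uniform-in-$t$ argument using monotonicity of $\ell^{\epsilon_n}$ and Helly/Egorov-type control of the pointwise convergence $\ell^{\epsilon_n}\to\ell^0$. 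Everything else is a routine limit passage in a linear bilinear form, granted Lemmas \ref{convergingrhs}, \ref{convergences}, \ref{generaltractbdry}, \ref{generaltractoncrack} and Theorems \ref{symmetry2}, \ref{limitspace}.
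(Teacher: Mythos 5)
Your overall route --- pass to the limit in the nonlocal weak form \eqref{energy based weakform} to obtain variational identities, then compare with the generalized-traction definitions \eqref{generalt1}, \eqref{generalt2} --- is the paper's; the variational identities you re-derive in your third paragraph are Lemmas \ref{momentumlim2} and \ref{traction2}, and reducing the theorem to those plus Lemmas \ref{generaltractbdry}, \ref{generaltractoncrack} is exactly how the paper proceeds. Your key geometric observation for \eqref{boundary tracesG} (bonds crossing $\{\ell(0)<x_1\leq \ell^0(\tau)-\beta,\,x_2=0\}$ lie in $FZ^{\epsilon_n}(t)$ for $n$ large, so the force there is exactly zero) is precisely the observation encoded in \eqref{ident998} and \eqref{ident98} of the paper's proof of Lemma \ref{traction2}.

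There is, however, a genuine gap in your proof of \eqref{mo}. Testing only against $\bw\in C_0^\infty(D\setminus\Gamma_T;\mathbb{R}^2)$ does not yield equality in $H^{-1}(D,\mathbb{R}^2)$. A line segment in the plane has positive $H^1$-capacity, so $C_0^\infty(D\setminus\Gamma_T)$ is \emph{not} dense in $H^1_0(D,\mathbb{R}^2)$: a cutoff function identically $1$ in a neighborhood of $\Gamma_T$ cannot be approximated in $H^1$ by functions vanishing near $\Gamma_T$. Your argument therefore only gives the equation as distributions on $D\setminus\Gamma_T$, which is strictly weaker than the claim. To obtain the stated $H^{-1}(D,\mathbb{R}^2)$ identity you must test against $\bw\in H^1_0(D,\mathbb{R}^2)$ whose support crosses the crack, and then the whole difficulty is to show the soft-bond ($S^+$) contribution still disappears in the limit --- that is the content of Lemma \ref{momentumlim2}, which the paper proves by invoking Lemma 6.5 of \cite{CMPer-Lipton}. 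Since you already work with $\bw\in\dot H^1(D,\mathbb{R}^2)$ for \eqref{boundary tracesAB}, the repair is simply to establish the identity \eqref{momentumlimit} for all such $\bw$ first, then specialize to $H^1_0(D)\subset\dot H^1(D)$ to get \eqref{mo}, which is the paper's order of operations. Relatedly, your heuristic that $\|\partial_S\Psi\|_\infty<\infty$ together with $|SZ^\epsilon|=O(\epsilon)$ makes the $S^+$ term negligible does not balance the $\epsilon^{-3}$ prefactor in \eqref{derivbond}; the actual smallness comes from the saturation bound \eqref{max}, $\sup_x|h'(\epsilon|\xi|x^2)2x|\lesssim(\epsilon|\xi|)^{-1/2}$, combined with the measure estimate \eqref{Fprimesecond} in the $(\bx,\xi)$ variables, so deferring to the paper's Lemma \ref{convergences} / Theorem \ref{limitspace} machinery (as you say you would) is indeed necessary rather than optional.
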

\noindent Here the normal tractions \eqref{boundary tracesAB} and \eqref{boundary tracesG} are defined in the generalized sense \eqref{generalt1}, \eqref{generalt2} respectively.
To summarize theorem \ref{momentumlim} delivers the global description of the displacement fields inside the cracking body. Together they deliver the elastodynamic equations and homogeneous traction boundary conditions on the crack faces given in  LEFM  \cite{Freund}, \cite{RaviChandar}, \cite{Anderson},  and \cite{Slepian}.

The field $\bu^0(t,\bx)$ is seen to be a weak solution of the wave equation on $D_t$ for $t\in[0,T]$. We begin with the definition of weak solution of the wave equation on time dependent domains introduced in \cite{DalToader}. Neumann boundary conditions are considered and the space  $\dot H^1(D_t,\mathbb{R}^2)=H^1(D_t,\mathbb{R}^2)\cap\dot L^2(D,\mathbb{R}^2)$ is introduced. Set $V_t=\dot H^1(D_t,\mathbb{R}^2)$,  $V_t^*=\dot H^1(D_t,\mathbb{R}^2)'$ for $t\in[0,T]$, and  $H=\dot L^2(D,\mathbb{R}^2)$. Recall $\Gamma_s\subset\Gamma_t$ when $0\leq s\leq t\leq T$ and $\mathcal{H}^1(\Gamma_T)< a-\ell(0)$.

\begin{definition}\label{defweakspaces}
\cite{DalToader} $\mathcal{V}$ is the space of functions $\bv\in  L^2(0,T;\, V_T)\cap H^1(0,T;\,H) $ such that $\bv(t) \in V_t$ for a.e. $t\in (0,T)$. It is a Hilbert space with scalar product given by
\begin{equation}
\label{innerprodspacetime}
(\bu,\bv)_\mathcal{V}=(\bu,\bv)_{ L^2(0,T;\, V_T)}+(\dot\bu,\dot\bv)_{ L^2(0,T;\,H)},
\end{equation}
where $\dot\bu$ and $\dot\bv$ denote distributional derivatives with respect to $t$.
\end{definition}

\begin{definition}\label{defweaksoln}
\cite{DalToader} 
Given $\bg(t)$ defined by \eqref{l2} the displacement $\bu$ is said to be a weak solution of the wave equation
\begin{align}\label{formulation}
\begin{cases}
\rho\ddot\bu(t)+{\rm div}(\mathbb{C}\mathcal{E}\bu(t))=0 \\
\mathbb{C}\mathcal{E}\bu(t)\bn=\bg(t), \text{ on }\partial D\\
\bu(t)\in V_t
\end{cases}
\end{align}
on the time interval $[0,T]$ if $\bu\in \mathcal{V}$ and
\begin{equation}
\label{weaksoln}
-\int_0^T\rho\int_D\dot\bu(t)\cdot\dot\varphi(t)\,d\bx\,dt+\int_0^T\int_D\mathbb{C}\mathcal{E}\bu(t):\mathcal{E}\varphi(t)\,d\bx\,dt=\int_0^T\int_{\partial D}\bg(t)\cdot\varphi(t)\,d\sigma\,dt
\end{equation}
for every $\varphi\in\mathcal{V}$ with $\varphi(T)=\varphi(0)=0$.
\end{definition}

\begin{theorem}
\label{zerohorizonweaksoln}
If the crack tip $\ell^0(t)$ is continuous and strictly increasing for $t\in[0,T]$ then the limit displacement $\bu^0$ is a weak solution of the wave equation on $D_t$ for $t\in[0,T]$ given by  definition \ref{defweaksoln}.
\end{theorem}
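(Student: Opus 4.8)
The plan is to verify directly the two requirements of Definition \ref{defweaksoln}: that $\bu^0$ lies in the space $\mathcal{V}$ of Definition \ref{defweakspaces}, and that the identity \eqref{weaksoln} holds for every $\varphi\in\mathcal{V}$ with $\varphi(0)=\varphi(T)=0$. For membership in $\mathcal{V}$, I would first use Theorem \ref{symmetry2}: the jump set of $\bu^0(t)$ lies in $\Gamma_t$ for a.e.\ $t$, so the symmetrized distributional gradient of the $SBD^2(D)$ field $\bu^0(t)$ has no singular part on the slit domain $D_t=D\setminus\Gamma_t$; together with \eqref{reg} and a Korn inequality on the cracked domain $D_t$ this gives $\bu^0(t)\in H^1(D_t;\mathbb{R}^2)$, and since $\bu^0(t)\in\dot L^2(D;\mathbb{R}^2)$ by the strong $C([0,T];\dot L^2)$ convergence of Lemma \ref{convergences}, also $\bu^0(t)\in V_t$ for a.e.\ $t$. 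The uniform elastic energy bound \eqref{LEFMbound} controls $\Vert\mathcal{E}\bu^0(t)\Vert_{L^2(D)}$ independently of $t$, and since $D_T\subset D_t$ this yields $\bu^0\in L^\infty(0,T;V_T)\hookrightarrow L^2(0,T;V_T)$; with $\dot\bu^0\in L^2(0,T;H)$ from Lemma \ref{convergences} we get $\bu^0\in H^1(0,T;H)$, hence $\bu^0\in\mathcal{V}$.

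The core step is a pointwise--in--time variational identity on the cracked domain: for a.e.\ $t\in(0,T)$ and every $\psi\in V_t$,
\begin{equation}\label{plan-pt}
\rho\langle\ddot\bu^0(t),\psi\rangle+\int_{D_t}\mathbb{C}\,\mathcal{E}\bu^0(t):\mathcal{E}\psi\,d\bx=\int_{\partial D}\bg(t)\cdot\gamma\psi\,d\sigma .
\end{equation}
To prove \eqref{plan-pt} I would fix $\tau\in(0,T)$ and $\beta\in(0,\ell^0(\tau)-\ell(0))$ and, using that a point has zero $H^1$--capacity in two dimensions, reduce to $\psi\in V_t$ vanishing near the crack tip $(\ell^0(t),0)$; for such $\psi$ a partition of unity cutting $D$ into a bulk region away from $\Gamma_t$ and the crack layers $L^\pm_\beta(\tau)$ splits $\psi=\psi_0+\psi^++\psi^-$ with $\psi_0\in H^1(D;\mathbb{R}^2)$ and $\psi^\pm\in W^\pm(D_\beta(\tau))$. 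Applying the integration--by--parts formula of Lemma \ref{generaltractbdry} to $\psi_0$ and that of Lemma \ref{generaltractoncrack} to $\psi^\pm$, and inserting the traction identifications of Theorem \ref{momentumlim} --- $\mathbb{C}\,\mathcal{E}\bu^0\bn=\bg$ on $\partial D$ and $\mathbb{C}\,\mathcal{E}\bu^0(t)\bn^\pm=0$ on $\{\ell(0)<x_1\le\ell^0(\tau)-\beta,\ x_2=0\}$ --- the crack--lip contributions drop out, the boundary contributions assemble to $\int_{\partial D}\bg\cdot\gamma\psi\,d\sigma$, and the bulk integrals combine to $\int_{D_t}\mathbb{C}\,\mathcal{E}\bu^0:\mathcal{E}\psi\,d\bx$ (the two notions of $\mathcal{E}\bu^0$ agreeing off the null set $\Gamma_t$); passing $\beta\downarrow0$ and then $\tau\downarrow0$ removes the restriction near the tip. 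The extension of $\ddot\bu^0(t)$ needed to make sense of the pairing with tip--avoiding $V_t$ fields is precisely what Theorem \ref{limitspace} provides.

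To conclude I would integrate \eqref{plan-pt} in time against $\varphi\in\mathcal{V}$ with $\varphi(0)=\varphi(T)=0$, taking $\psi=\varphi(t)\in V_t$ for a.e.\ $t$; the terms are integrable in $t$ by the bounds of Lemmas \ref{convergences} and \ref{convergingrhs} and Theorem \ref{limitspace}. I would then integrate the inertial term by parts in time: since $\ddot\bu^0$ is the distributional time derivative of $\dot\bu^0\in L^2(0,T;H)$ (Lemma \ref{convergences}), $\varphi,\dot\varphi\in L^2(0,T;H)$, and $\varphi$ vanishes at $t=0,T$, one obtains $\int_0^T\rho\langle\ddot\bu^0(t),\varphi(t)\rangle\,dt=-\int_0^T\rho\int_D\dot\bu^0(t)\cdot\dot\varphi(t)\,d\bx\,dt$, after approximating $\varphi$ by $\dot H^1(D;\mathbb{R}^2)$--valued fields (again licensed by zero capacity of the moving tip and continuity of $\ell^0$). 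The remaining identity is exactly \eqref{weaksoln}, so $\bu^0$ is a weak solution of the wave equation on $D_t$ in the sense of Definition \ref{defweaksoln}.

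I expect the main obstacle to be the behaviour near the moving crack tip in the middle step: Lemmas \ref{generaltractbdry}--\ref{generaltractoncrack} and the traction identities of Theorem \ref{momentumlim} are only available on $\partial D^\pm_\beta(\tau)$ and for $t\in(\tau,T)$, so one must show the contributions from the $\beta$-- and $\tau$--neighbourhoods of the space--time tip are negligible as $\beta,\tau\downarrow0$, and that the $H^{-1/2}(\partial D)^2$ traction on $\partial D$ is compatible with the ${H}_{00}^{-1/2}(\partial D^\pm_\beta(\tau))^2$ traction where the two boundary pieces overlap. This should follow from the absence of energy concentration at the tip --- \eqref{LEFMbound} forces $\mathcal{E}\bu^0(t)\in L^2(D)$ with no atom there --- from the $\mathcal{H}^1$--smallness of the crack increments guaranteed by the continuity and strict monotonicity of $\ell^0(t)$ assumed in the theorem, and from the density of tip--avoiding fields in $V_t$, which is also what legitimises the time integration by parts.
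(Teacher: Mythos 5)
Your plan takes a genuinely different route from the paper, and the difference matters. The paper never forms a pointwise-in-time variational identity on $D_t$. Instead, for a test field $\bw\in C_c^\infty((0,T);V_T)$ with $\bw(t)\in V_t$ it introduces the \emph{time-shifted} trial $\tilde\bw(t)=\bw(t-\beta)\in V_{t-\beta}\subset V_t$, substitutes this directly into the nonlocal weak form \eqref{energy based weakform}, passes to the $\epsilon_n\to0$ limit term by term (this is where the failure-zone structure together with the continuity and strict monotonicity of $\ell^0$ enters, via \eqref{infervalfunction2} and \eqref{est50000}), and only then sends $\beta\to0$. The last step uses Lemma \ref{density} (Dal Maso--Larsen), which guarantees $V_t=\overline{\bigcup_{s<t}V_s}$ for a.e.\ $t$, so that $\bw(t-\beta)\to\bw(t)$ in $V_t$. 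This ingredient is absent from your outline, and it is the one that actually exploits the assumed regularity of $\ell^0$. The paper's scheme also avoids ever pairing $\ddot\bu^0(t)$ against discontinuous test functions: it stays with $\dot\bu^{\epsilon_n}\cdot\dot{\tilde\bw}$, which is already the integrated-by-parts form appearing in \eqref{weaksoln}.

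The step that fails in your plan is the pointwise identity \eqref{plan-pt}. You want to pair $\ddot\bu^0(t)$ with a general $\psi\in V_t$, but the paper only establishes $\ddot\bu^0(t)\in\dot H^1(D;\mathbb{R}^2)'$ (Lemma \ref{convergences}) and $\ddot\bu_\tau^0(t)\in W^\pm(D_\beta(\tau))'$ for $t>\tau$ (Theorem \ref{limitspace}); no extension to $V_t'$ is proved, nor is any claim made that the value obtained from a partition-of-unity decomposition is independent of the choice of partition. Worse, the decomposition $\psi=\psi_0+\psi^++\psi^-$ with $\psi_0\in H^1(D;\mathbb{R}^2)$ and $\psi^\pm\in W^\pm(D_\beta(\tau))$ cannot reproduce a jump of $\psi$ across the crack segment $\{\ell^0(\tau)-\beta<x_1\le\ell^0(t),\,x_2=0\}$: $\psi_0$ does not jump at all, and $\psi^\pm$ vanish outside $L^\pm_\beta(\tau)$, whose crack face stops at $x_1=\ell^0(\tau)-\beta$. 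The zero-capacity argument removes a neighborhood of the single point $(\ell^0(t),0)$, not the positive-length segment $\Gamma_t\setminus\Gamma_{s}$ with $s$ such that $\ell^0(s)=\ell^0(\tau)-\beta$. So the class of admissible $\psi$ for which \eqref{plan-pt} is obtained shrinks as $\tau,\beta$ are fixed, and the passage $\beta,\tau\downarrow0$ needs uniform control and a density statement in $V_t$ that your outline gestures at but does not supply. The subsequent integration by parts in time also needs care once the duality pairings live in $t$-dependent spaces; the paper sidesteps this entirely by never leaving the $\dot\bu\cdot\dot\varphi$ form. In short, your high-level goal is the same, but the technical engine the paper actually uses --- time-shifted test functions plus the Dal Maso--Larsen continuity of $t\mapsto V_t$ --- is not in your proposal, and the replacement you sketch leaves an unclosed gap at the moving tip.
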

Theorem \ref{zerohorizonweaksoln} establishes the link between the nonlocal theory and the theory of the wave equation on time dependent domains \cite{DalToader}.  Here the choice of test functions delivers a variational description of vanishing normal traction for the solution of the weak formulation. If one assumes a more general crack structure for the nonlocal problem then a connection to the local problem for more general time dependent domains can be made, this is discussed in the conclusion. 


\section{Existence and uniqueness of nonlocal elastodynamics}\label{existenceuniqueness}
We assert the existence and uniqueness for a solution $\bu^\epsilon(\bx,t)$ of the nonlocal evolution with the balance of momentum given in strong form \eqref{energy based model2}.

\begin{figure} 
\centering
\begin{tikzpicture}[xscale=0.60,yscale=0.60]

%
%
\draw [-,thick] (-1.80,0.05) -- (-1.3,0.05);

\draw [-,thick] (-1.80,-0.05) -- (-1.3,-0.05);

\draw [-,thick] (-1.3,-0.05) -- (-1.3,0.05);

\draw [-,thick] (-1.3,0.05) to [out=0, in=90] (-1.25 ,0);



\draw [-,thick] (-1.25,0) -- (0.0,0);


\draw [<-,thick] (-1.25,-0.1) -- (-1.25,-0.8);

\node  at (-1.25,-1.2) {\small $\ell(0)$};

\draw [<-,thick] (0.0,-0.1) -- (0.0,-2.2);

\node  at (0.0,-2.4) {\small $\ell^0(t)-\beta$};




\draw [-,thick] (-2,0.2) to [out=-90, in=180] (-1.8 ,0.05);

\draw [-,thick] (-2,-0.2) to [out=90, in=180] (-1.8 ,-0.05);

\draw [-,thick] (-2,2.8) to [out=90, in=180] (-1.8 ,3.0);

\draw [-,thick] (-2,-2.8) to [out=-90, in=180] (-1.8 ,-3.0);

\draw [-,thick] (2.5,3.0) to [out=0, in=90] (2.7 ,2.8);

\draw [-,thick] (1.8,-3.0) to [out=0, in=-90] (2.7 ,-2.8);


%
%
\draw [-,thick] (-2,2.8) -- (-2,0.2);

\draw [-,thick] (2.7,-2.8) -- (2.7,0.0); 

\draw [-,thick] (-2,-2.8) -- (-2,-0.2);
\draw [-,thick] (-1.8,-3) -- (1.8,-3);

\draw [-,thick] (2.7,2.8) -- (2.7,0);
\draw [-,thick] (-1.8,3) -- (2.5,3);
%
%

\draw [-,thick,dashed] (0.0,0.0) -- (0.0,1.5);

\draw [-,thick,dashed] (-1.25,1.5) -- (0.0,1.5);

\draw [-,thick,dashed] (-1.25,1.5) -- (-1.25,2.0);

\draw [-,thick,dashed] (-1.25,2.0) -- (1.60,2.0);

\draw [-,thick,dashed] (1.60,2.0) -- (1.60,0.0);

\draw [-,thick,dashed] (1.60,0.0) -- (2.7,0.0);

%
%
\node [right] at (-2.3,0.8) {{ $L_\beta^+(t)$}};

\node [right] at (-0.10,-0.5) {{ $D_\beta(t)$}};





\end{tikzpicture} 
\caption{{ \bf  Domain  $L^+_\beta(t)$ adjacent to $\partial D_\beta^+(t)$. The boundary of $L^+_\beta(t)$ interior to $D_\beta(t)$ is denoted by the dashed line.}}
 \label{L-upper}
\end{figure}
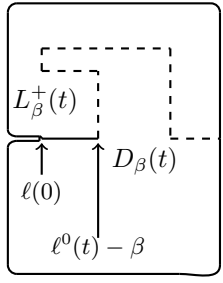

\begin{theorem}\label{existenceuniquness}{
\bf Existence and uniqueness of the nonlocal evolution.}
The initial value problem given by \eqref{energy based model2} and \eqref{idata} has a unique solution $\bu(\bx,t)$ such that for every $t\in [0,T]$, $\bu$ takes values in $\dot L^2(D;\mathbb{R}^2)$ and belongs to the space $C^2([0,T];\dot L^2(D;\mathbb{R}^2))$. 
\end{theorem}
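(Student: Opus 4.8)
The plan is to recast the second-order problem \eqref{energy based model2}--\eqref{idata} as a first-order system on the Hilbert space $X=\dot L^2(D;\mathbb{R}^2)\times\dot L^2(D;\mathbb{R}^2)$ and invoke the Picard--Lindel\"of theorem (Banach fixed point) for ODEs in Banach spaces. Setting $\by(t)=(\bu(t),\dot\bu(t))$, the evolution becomes $\dot\by(t)=F^\epsilon(\by(t),t)$ with $F^\epsilon((\bu,\bv),t)=\big(\bv,\ \rho^{-1}(\mathcal{L}^\epsilon(\bu)+\bb^\epsilon(t))\big)$ and $\by(0)=(\bu_0,\bv_0)\in X$. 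Two preliminaries are needed: that $F^\epsilon(\cdot,t)$ maps $X$ into $X$, and that it is Lipschitz on $X$ with a constant independent of $t\in[0,T]$.

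For the first point, I would observe from \eqref{nonlocforcetensite} that the pairwise force is antisymmetric, $\bff^\epsilon(\bx,\by)=-\bff^\epsilon(\by,\bx)$, because $S(\by,\bx,\bu)=S(\bx,\by,\bu)$ and $\be_{\by-\bx}=-\be_{\bx-\by}$, and moreover that $\bff^\epsilon(\by,\bx)$ is parallel to $\by-\bx$. Applying Fubini's theorem and the change of variables $\bx\leftrightarrow\by$ then gives $\int_D\mathcal{L}^\epsilon(\bu)\,d\bx=0$ and $\int_D\bx\times\mathcal{L}^\epsilon(\bu)\,d\bx=0$, so $\mathcal{L}^\epsilon(\bu)\in\dot L^2(D;\mathbb{R}^2)$; together with \eqref{rigid} this shows $F^\epsilon$ preserves $X$, and since $g$ is smooth and bounded in $t$, $t\mapsto\bb^\epsilon(t)$ is continuous (indeed smooth) into $\dot L^2(D;\mathbb{R}^2)$, so $F^\epsilon$ is continuous in $t$.

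The heart of the matter is the Lipschitz estimate for $\mathcal{L}^\epsilon$. Using \eqref{pdforce}--\eqref{derivbond}, the smoothness of $\Psi$ (with bounded first and second derivatives), $J^\epsilon\le M$, and the elementary bound $|S(\by,\bx,\bu)-S(\by,\bx,\bv)|\le(|(\bu-\bv)(\by)|+|(\bu-\bv)(\bx)|)/|\by-\bx|$, I would derive the pointwise estimate
\begin{equation*}
|\mathcal{L}^\epsilon(\bu)(\bx)-\mathcal{L}^\epsilon(\bv)(\bx)|\le \frac{2M\,\|\Psi''\|_\infty}{\epsilon^3\omega_2}\int_{\mathcal{H}_\epsilon(\bx)}\frac{|(\bu-\bv)(\by)|+|(\bu-\bv)(\bx)|}{|\by-\bx|^{3/2}}\,d\by.
\end{equation*}
Since the kernel $|\bz|^{-3/2}\mathbf{1}_{\{|\bz|<\epsilon\}}$ lies in $L^1(\mathbb{R}^2)$ with norm $\le C\epsilon^{1/2}$, the first summand is handled by Young's convolution inequality and the second by the uniform bound $\int_{\mathcal{H}_\epsilon(\bx)}|\by-\bx|^{-3/2}\,d\by\le C\epsilon^{1/2}$; taking the $L^2(D;\mathbb{R}^2)$ norm in $\bx$ yields $\|\mathcal{L}^\epsilon(\bu)-\mathcal{L}^\epsilon(\bv)\|_{L^2(D;\mathbb{R}^2)}\le L_\epsilon\|\bu-\bv\|_{L^2(D;\mathbb{R}^2)}$ with $L_\epsilon$ depending on $\epsilon$, $M$, $\|\Psi''\|_\infty$, $\omega_2$, $\rho$ but not on $t$. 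Hence $F^\epsilon(\cdot,t)$ is globally Lipschitz on $X$ with a $t$-independent constant. I expect this convolution-type estimate for the $|\by-\bx|^{-3/2}$ kernel to be the one genuinely technical step; the rest is standard ODE machinery.

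Granting these facts, the conclusion follows: the integral equation $\by(t)=\by(0)+\int_0^tF^\epsilon(\by(s),s)\,ds$ has a unique fixed point in $C([0,t_\ast];X)$ by the contraction mapping principle on a short interval, and because the Lipschitz constant is global the local solution extends without finite-time blow-up to all of $[0,T]$, so $\by\in C^1([0,T];X)$. Reading off the components, $\bu\in C^1([0,T];\dot L^2(D;\mathbb{R}^2))$ and $\dot\bu\in C^1([0,T];\dot L^2(D;\mathbb{R}^2))$, hence $\bu\in C^2([0,T];\dot L^2(D;\mathbb{R}^2))$ with $\ddot\bu=\rho^{-1}(\mathcal{L}^\epsilon(\bu)+\bb^\epsilon)$ continuous in $t$. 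Uniqueness in this class is part of the fixed-point statement, since any $C^2$ solution of \eqref{energy based model2}--\eqref{idata} solves the integral equation.
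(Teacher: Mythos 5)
Your proof is correct and takes essentially the same approach as the paper: recast the second-order problem as a first-order ODE on $\dot L^2(D;\mathbb{R}^2)\times\dot L^2(D;\mathbb{R}^2)$, show $\mathcal{L}^\epsilon$ is globally Lipschitz there, and apply the Banach fixed point theorem. The paper merely asserts the Lipschitz continuity and defers to \cite{CMPer-Lipton4} for details, which you supply, including the invariance of the orthogonal complement of rigid motions and the $|\by-\bx|^{-3/2}$ kernel estimate via Young's inequality.
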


The proof of this proposition follows from the Lipschitz continuity of $\mathcal{L}^\epsilon(\bu^\epsilon)(\bx,t)+\bb(\bx,t)$ as a function of  $\bu^\epsilon$ with respect to the $L^2(D;\mathbb{R}^2)$ norm and the Banach fixed point theorem, see e.g. \cite{CMPer-Lipton4}.
It is pointed out that $SZ^\epsilon$ describes an unstable  phase of the material however because the peridynamic force is a uniformly Lipschitz function on $\dot L^2(D;\mathbb{R}^2)$  the model can be viewed as an ODE for vectors in $\dot L^2(D;\mathbb{R}^2)$ and is well posed.

\section{Symmetry of the limiting elastic displacement field}\label{s:proofofsymmetry}

In this section theorem \ref{symmetry2} is established. To prove theorem \ref{symmetry2} the following lemma is used. 
\begin{lemma}
\label{measures}
\begin{equation}\label{convinmeasure}
\begin{aligned}
&\lim_{\epsilon_n\rightarrow 0}\frac{1}{{\epsilon_n}^2\omega_2} \int_D\int_{{\mathcal H}_{\epsilon_n}(\bx)\cap D}\frac{|\by-\bx|}{\epsilon_n}J^{\epsilon_n}(|\by-\bx|)S(\by,\bx,\bu^{\epsilon_n}(t))^-d\by\,\varphi(\bx)\,d\bx
\\
&=\int_D div\,\bu^0(\bx,t)\varphi(\bx)\,d\bx\\
&\lim_{\epsilon_n\rightarrow 0}\frac{1}{{\epsilon_n}^2\omega_2} \int_{SZ^{\epsilon_n}}\int_{{\mathcal{H}}_{\epsilon_n}(\bx)\cap D}\frac{|\by-\bx|}{\epsilon_n}J^{\epsilon_n}(|\by-\bx|)S(\by,\bx,\bu^{\epsilon_n}(t))^+d\by\,\varphi(\bx)\,d\bx\\
 &=C\int_{{\mathcal{J}}_{\bu^0(t)}}[\bu^0(\bx,t)]\cdot \bn\,\varphi(\bx)d{\mathcal{H}}^1(\bx)
\end{aligned}
\end{equation}
for all scalar test functions $\varphi$ that are differentiable with support in $D$. Here $[\bu^0(\bx,t)]$ denotes the jump in displacement across $\mathcal{J}_{\bu^0(t)}$ and $\bn$ is the unit normal to $\mathcal{J}_{\bu^0(t)}$ and points in the vertical direction $\be^2$, and $C=\omega_2\int_0^1r^2dr$.

\end{lemma}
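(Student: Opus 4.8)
The plan is to perform the change of variables $\by=\bx+\epsilon_n\xi$, with $\xi$ ranging over the unit disk $\mathcal{H}_1(0)$ and $\be=\xi/|\xi|$, so that $d\by=\epsilon_n^2\,d\xi$, $|\by-\bx|=\epsilon_n|\xi|$, $J^{\epsilon_n}(|\by-\bx|)=J(|\xi|)$, and the nonlocal strain becomes the rescaled difference quotient $S(\by,\bx,\bu^{\epsilon_n}(t))=D_{\be}^{\epsilon_n|\xi|}\bu^{\epsilon_n}\cdot\be$, where $D_{\be}^{h}\bu(\bx):=(\bu(\bx+h\be)-\bu(\bx))/h$. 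After extending the integrand by zero when $\bx+\epsilon_n\xi\notin D$, the prefactor $\epsilon_n^{-2}\omega_2^{-1}$ cancels the Jacobian, so both double integrals in the lemma take the form $\omega_2^{-1}\int_{D\times\mathcal{H}_1(0)}|\xi|J(|\xi|)\,(\,\cdot\,)\,d\xi\,d\bx$ against the fixed weight $|\xi|J(|\xi|)\,d\xi\,d\bx$. I use throughout the elementary uniform limits, valid for smooth compactly supported scalar $\varphi$ and vector $\bw$, that $D_{-\be}^{\epsilon_n|\xi|}\varphi\to-\nabla\varphi\cdot\be$ and $D_{\be}^{\epsilon_n|\xi|}\bw\cdot\be\to\mathcal{E}\bw\,\be\cdot\be$ uniformly on $D$.

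For the first identity I would invoke the uniform bound $\int_{D\times\mathcal{H}_1(0)}|\xi|J(|\xi|)\,|(D_{\be}^{\epsilon_n|\xi|}\bu^{\epsilon_n}\cdot\be)^-|^2\,d\xi\,d\bx\le C$ on the elastic part of the rescaled strain, which is inequality (6.73) of \cite{CMPer-Lipton}; hence along a subsequence $(D_{\be}^{\epsilon_n|\xi|}\bu^{\epsilon_n}\cdot\be)^-\rightharpoonup g(\bx,\xi)$ weakly in $L^2(D\times\mathcal{H}_1(0);|\xi|J(|\xi|)\,d\xi\,d\bx)$. Running the localization argument from the proof of Lemma 6.6 of \cite{CMPer-Lipton} on each subdomain compactly contained in $D$ and bounded away from the centerline $\{x_2=0\}$ — together with the strong convergence $\bu^{\epsilon_n}\to\bu^0$ in $C([0,T];\dot L^2(D;\mathbb{R}^2))$ from Lemma \ref{convergences} and the nonlocal-to-local strain relation \eqref{equatesandE} — I would identify $g(\bx,\xi)=\mathcal{E}\bu^0(\bx,t)\,\be\cdot\be$ for a.e.\ $\bx\in D$, the exceptional set $\{x_2=0\}$ being $\mathcal{L}^2$-null. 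Testing the weak convergence against $\varphi$, passing to the limit, and performing the angular integration $\omega_2^{-1}\int_{\mathcal{H}_1(0)}|\xi|J(|\xi|)\be_i\be_j\,d\xi=\delta_{ij}\int_0^1 r^2 J(r)\,dr$ collapses $\mathcal{E}\bu^0\,\be\cdot\be$ onto a multiple of $\operatorname{tr}\mathcal{E}\bu^0=\operatorname{div}\bu^0$, giving the right-hand side of the first identity with the constant $C$ of the statement; undoing the change of variables closes this case.

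For the second identity I would evaluate the limit of the \emph{undecomposed} quantity $\int_{D\times\mathcal{H}_1(0)}|\xi|J(|\xi|)\,D_{\be}^{\epsilon_n|\xi|}\bu^{\epsilon_n}\cdot\be\,\varphi(\bx)\,d\xi\,d\bx$ in a second way, by moving the finite difference onto the smooth test function via a translation of the spatial variable: it equals $\int_{D\times\mathcal{H}_1(0)}|\xi|J(|\xi|)\,(D_{-\be}^{\epsilon_n|\xi|}\varphi)\,\bu^{\epsilon_n}\cdot\be\,d\xi\,d\bx$, and since $D_{-\be}^{\epsilon_n|\xi|}\varphi\to-\nabla\varphi\cdot\be$ uniformly while $\bu^{\epsilon_n}\to\bu^0$ in $L^2$, the limit is $-C\int_D\bu^0\cdot\nabla\varphi\,d\bx=C\int_D\operatorname{tr}(E\bu^0)\,\varphi\,d\bx$. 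Since $\bu^0(t)\in SBD(D)$ for a.e.\ $t$ (as established in \cite{CMPer-Lipton3}, \cite{CMPer-Lipton}), formula \eqref{distderiv} with $\Phi_{ij}=\delta_{ij}\varphi$ splits this as $C\int_D\operatorname{div}\bu^0\,\varphi\,d\bx+C\int_{\mathcal{J}_{\bu^0(t)}}[\bu^0]\cdot\bn\,\varphi\,d\mathcal{H}^1$. Subtracting the first identity from this undecomposed limit, and using $D_{\be}^{\epsilon_n|\xi|}\bu^{\epsilon_n}\cdot\be=(D_{\be}^{\epsilon_n|\xi|}\bu^{\epsilon_n}\cdot\be)^-+(D_{\be}^{\epsilon_n|\xi|}\bu^{\epsilon_n}\cdot\be)^+$, isolates $\lim_n\int_{D\times\mathcal{H}_1(0)}|\xi|J(|\xi|)(D_{\be}^{\epsilon_n|\xi|}\bu^{\epsilon_n}\cdot\be)^+\varphi\,d\xi\,d\bx=C\int_{\mathcal{J}_{\bu^0(t)}}[\bu^0]\cdot\bn\,\varphi\,d\mathcal{H}^1$. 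Finally, since the $(\,\cdot\,)^+$ term is supported on pairs lying in $SZ^{\epsilon_n}(t)$ by \eqref{decomposedetailsS}--\eqref{decomposedetailsS3}, the $\bx$-integral restricts to $SZ^{\epsilon_n}(t)$; and as $SZ^{\epsilon_n}(t)$ lies in a $2\epsilon_n$-collar of the segment $\{x_2=0,\ \ell(0)\le x_1\le \ell^{\epsilon_n}(t)+C\epsilon_n\}$, the limit measure is carried by $\{x_2=0\}$, forcing $\mathcal{J}_{\bu^0(t)}\subseteq\{x_2=0\}$ with unit normal $\bn$ in the $\be^2$ direction. Undoing the change of variables yields the stated identity.

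The main obstacle is the identification $g(\bx,\xi)=\mathcal{E}\bu^0(\bx,t)\,\be\cdot\be$: one must show the weak-$L^2$ limit of the truncated difference quotients depends on $\xi$ only through the quadratic form $\mathcal{E}\bu^0\,\be\cdot\be$ in $\be=\xi/|\xi|$, and coincides with the approximate symmetrized gradient of $\bu^0$ away from the centerline. This is where the $SBD^2(D)$ structure of $\bu^0$, the localization of its jump set onto $\{x_2=0\}$, and the relation \eqref{equatesandE} are genuinely used, and it is inherited from the companion analysis in \cite{CMPer-Lipton3}, \cite{CMPer-Lipton}. A secondary, routine point is the $\partial D$-collar where $\mathcal{H}_{\epsilon_n}(\bx)\cap D$ differs from the full ball: on the $(\,\cdot\,)^-$ term the integrand is bounded there by $S_c$, so this set (of area $O(\epsilon_n)$) contributes $O(\epsilon_n)\to0$ after the $\epsilon_n^{-2}$ rescaling, while the $(\,\cdot\,)^+$ term is supported near $\{x_2=0\}$, away from $\partial D$.
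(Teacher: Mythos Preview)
Your proposal is correct and follows essentially the same route as the paper's own proof: the change of variables $\by=\bx+\epsilon_n\xi$, the uniform $L^2$ bound on $(D_{\be}^{\epsilon_n|\xi|}\bu^{\epsilon_n}\cdot\be)^-$ from inequality (6.73) of \cite{CMPer-Lipton}, the identification $g(\bx,\xi)=\mathcal{E}\bu^0\be\cdot\be$ via Lemma~6.6 of \cite{CMPer-Lipton} on subsets compactly contained in $D_t$, the angular identity $\omega_2^{-1}\int_{\mathcal{H}_1(0)}|\xi|J(|\xi|)\be_i\be_j\,d\xi=\delta_{ij}\int_0^1 r^2 J(r)\,dr$, the integration-by-parts computation of the undecomposed limit, the SBD decomposition \eqref{distderiv} with $\Phi_{ij}=\delta_{ij}\varphi$, and the subtraction to isolate the $(\,\cdot\,)^+$ contribution are all exactly the steps the paper takes. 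Your closing remarks on the boundary collar and the support of the $(\,\cdot\,)^+$ term on $SZ^{\epsilon_n}(t)$ are a little more explicit than the paper's treatment, but add nothing new.
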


\begin{proof}[Proof of Lemma \label{convinmeasure}]
It is convenient to make the change of variables $\by=\bx+\epsilon{\xi}$ where ${\xi}$ belongs to the unit disk at the origin $\mathcal{H}_1(0)=\{|{\xi}|<1\}$ and $\be=\xi/|\xi|$. The strain is written
\begin{equation}
\begin{aligned}
\frac{\bu^\epsilon(\bx+\epsilon\xi)-\bu^\epsilon(\bx)}{\epsilon|\xi|}&:=D_{\be}^{\epsilon|\xi|}\bu^\epsilon,  \hbox{  and}\\
S(\by,\bx,\bu^\epsilon(t))&=D_{\be}^{\epsilon|\xi|}\bu^\epsilon\cdot\be, 
\end{aligned}
\label{straiin}
\end{equation}
and for infinitely differentiable scalar valued functions $\varphi$ and vector valued functions $\bw$ bounded and continuous on $D$ we have
\begin{equation}\label{testgrad}
\lim_{\epsilon\rightarrow 0}D_{-\be}^{\epsilon|\xi|}\varphi=-\nabla\varphi\cdot\be,
\end{equation}
and
\begin{eqnarray}\label{teststrain}
\lim_{\epsilon\rightarrow 0}D_{\be}^{{\epsilon}|\xi|} \bw\cdot \be=\mathcal{E} \bw \,\be\cdot \be \label{grad}
\end{eqnarray}
where the convergence is uniform in $D$.
We now recall $S(\by,\bx,\bu^\epsilon(t))^-=D_{\be}^{\epsilon|\xi|}\bu^\epsilon\cdot\be^-$ defined by \eqref{decomposedetailsS}.
We extend $D_{\be}^{\epsilon|\xi|}\bu^\epsilon\cdot\be^-$ by zero when $\bx\in D$ and $\bx+\epsilon\xi\not\in D$ and
\begin{equation}
\label{changevbls}
\begin{aligned}
&\frac{1}{{\epsilon_n}^2\omega_2} \int_D\int_{{\mathcal H}_{\epsilon_n}(\bx)\cap D}\frac{|\by-\bx|}{\epsilon_n}J^{\epsilon_n}(|\by-\bx|)|S(\by,\bx,\bu^{\epsilon_n}(t))^-|^2d\by\,d\bx\\
&=\int_{D\times\mathcal{H}_1(0)}|\xi| J(|\xi|)|(D_{\be}^{{\epsilon_n}|\xi|}\bu^{\epsilon_n}\cdot \be)^-|^2\,d\xi\,d\bx.
\end{aligned}
\end{equation}
Then as in inequality (6.73) of \cite{CMPer-Lipton} we have that
\begin{equation}\label{L2bound}
\begin{aligned}
\int_{D\times\mathcal{H}_1(0)}|\xi| J(|\xi|)|(D_{\be}^{{\epsilon_n}|\xi|}\bu^{\epsilon_n}\cdot \be)^-|^2\,d\xi\,d\bx<C,
\end{aligned}
\end{equation}
for all $\epsilon_n>0$.
From this we can conclude there exists a function $g(\bx,\xi)$ such that a subsequence 
\begin{equation}\label{weakdiffquo}
D_{\be}^{{\epsilon_n}|\xi|}\bu^{\epsilon_n}\cdot \be^-\rightharpoonup g(\bx,\xi)
\end{equation}
 converges weakly in $L^2(D\times\mathcal{H}_1(0),\mathbb{R})$ where the $L^2$ norm and inner product are with respect to the weighted measure $|\xi|J(|\xi|)d\xi d\bx $. Now for any positive number $\eta$ and any subset $D'$ compactly contained in $D_t$ we can argue as in (\cite{CMPer-Lipton} proof of lemma 6.6) that $g(\bx,\xi)=\mathcal{E}\bu^0\be\cdot\be$ for all points in $D'$ with $dist(D',\partial D_t)>\eta$. Since $D'$ and $\eta$ is arbitrary we get that
\begin{equation}\label{weaklim}
g(\bx,\xi)=\mathcal{E}\bu^0\be\cdot\be
\end{equation}
almost everywhere in  $D$. Additionally for any smooth scalar test function $\varphi(\bx)$ with compact support in $D$ straight forward computation gives 
\begin{equation}
\label{identify}
\begin{aligned}
&\lim_{\epsilon_n\rightarrow0}\int_{D\times\mathcal{H}_1(0)}|\xi| J(|\xi|)D_{\be}^{{\epsilon_n}|\xi|}\bu^{\epsilon_n}\cdot \be^-\,d\xi\varphi(\bx)\,d\bx\\
&=\int_{D\times\mathcal{H}_1(0)}|\xi| J(|\xi|)g(\bx,\xi)\,d\xi\varphi(\bx)\,d\bx\\
&=
\int_{D\times\mathcal{H}_1(0)}|\xi| J(|\xi|)\mathcal{E}\bu^0(\bx)\be\cdot\be\,d\xi\varphi(\bx)\,d\bx\\
&=C\int_D\,div\bu^0(\bx)\varphi(\bx)d\bx,
\end{aligned}
\end{equation}
Here $C=\omega_2\int_0^1r^2\,J(r)\,dr$ and we have used
\begin{equation}\label{convegDspoon}
\begin{aligned}
\frac{1}{\omega_2}\int_{\mathcal{H}_1(0)}|\xi|J(|\xi|)\be_ie_j\,d\xi=\delta_{ij}\int_0^1r^2J(r)\,dr.
\end{aligned}
\end{equation}
On the other hand for any smooth test function $\varphi$ with compact support in $D$ we can integrate by parts and use \eqref{testgrad} to write
\begin{equation}\label{L1}
\begin{aligned}
&\lim_{\epsilon_n\rightarrow0}\int_{D\times\mathcal{H}_1(0)}|\xi| J(|\xi|)D_{\be}^{{\epsilon_n}|\xi|}\bu^{\epsilon_n}\cdot \be\varphi(\bx)\,d\xi\,d\bx\\
&=\lim_{\epsilon_n\rightarrow0}\int_{D\times\mathcal{H}_1(0)}|\xi| J(|\xi|)D_{-\be}^{{\epsilon_n}|\xi|}\varphi(\bx)\bu^{\epsilon_n}\cdot \be,d\xi\,d\bx\\
&=-\int_{D\times\mathcal{H}_1(0)}|\xi| J(|\xi|)\bu^0\cdot \be\,\nabla\varphi(\bx)\cdot\be\,d\xi\,d\bx\\
&=-C\int_{D}\bu^0\cdot \nabla\varphi(\bx)\,d\bx\\
&=C\int_D\,tr{E\bu^0}\varphi(\bx)\,d\bx,
\end{aligned}
\end{equation}
where $E\bu^0$ is the strain of the $SBD^2$ limit displacement $\bu^0$.
Now since $\bu^0$ is in $SBD$ its weak derivitave satisfies \eqref{distderiv} and it follows on choosing $\Phi_{ij}={\delta}_{ij}\varphi$ that 
\begin{equation}\label{radon}
\begin{aligned}
\int_D\,tr{E\bu^0}\varphi\,d\bx=\int_D\,div\bu^0\varphi\,d\bx+\int_{\mathcal{J}_{\bu^0(t)}}[\bu^0]\cdot \bn\,\varphi\,d\mathcal{H}^1(\bx),
\end{aligned}
\end{equation}
and 
\begin{equation}\label{diff}
\begin{aligned}
&\int_{D\times\mathcal{H}_1(0)}|\xi| J(|\xi|)D_{\be}^{{\epsilon_n}|\xi|}\bu^{\epsilon_n}\cdot \be\,d\xi\varphi(\bx)\,d\bx\\
&=\int_{D\times\mathcal{H}_1(0)}|\xi| J(|\xi|)(D_{\be}^{{\epsilon_n}|\xi|}\bu^{\epsilon_n}\cdot \be)^-d\xi\varphi(\bx)\, d\bx\\
&+ \int_{D\times\mathcal{H}_1(0)}|\xi| J(|\xi|)(D_{\be}^{{\epsilon_n}|\xi|}\bu^{\epsilon_n}\cdot \be)^+d\xi \varphi(\bx)\,d\bx
\end{aligned}
\end{equation}
to conclude
\begin{equation}\label{radonlim}
\begin{aligned}
&\lim_{\epsilon_n \rightarrow 0}\int_{D\times\mathcal{H}_1(0)}|\xi| J(|\xi|)(D_{\be}^{{\epsilon_n}|\xi|}\bu^{\epsilon_n}\cdot \be)^+d\xi \varphi(\bx)\,d\bx\\
&=C\int_{\mathcal{J}_{\bu^0(t)}}[\bu^0]\cdot \bn\,\varphi\,d\mathcal{H}^1(\bx).
\end{aligned}
\end{equation}
On changing variables we obtain the identities:
\begin{equation}\label{s1}
\begin{aligned}
&\lim_{\epsilon_n \rightarrow 0}\frac{1}{\epsilon_n^2}\int_{D}\int_{\mathcal{H}_{\epsilon_n}(\bx)}\frac{|\by-\bx|}{\epsilon_n} J^{\epsilon_n}(|\by-\bx|)
S(\by,\bx,\bu^{\epsilon_n}(t))^+\,d\by \,\varphi(\bx)\,d\bx\\
&=C\int_{\mathcal{J}_{\bu^0(t)}}[\bu^0]\cdot \bn\,\varphi\,d\mathcal{H}^1(\bx).
\end{aligned}
\end{equation}
and
\begin{equation}
\label{s2}
\begin{aligned}
&\lim_{\epsilon_n\rightarrow0}\frac{1}{\epsilon_n^2}\int_{D}\int_{\mathcal{H}_{\epsilon_n}(\bx)}\frac{|\by-\bx|}{\epsilon_n} J^{\epsilon_n}(|\by-\bx|)
S(\by,\bx,\bu^{\epsilon_n}(t))^-\,d\by \,\varphi(\bx)\,d\bx\\
&=C\int_D\,div\bu^0(\bx)\varphi(\bx)d\bx,
\end{aligned}
\end{equation}
and  lemma \ref{measures} is proved. 
\end{proof}

To prove theorem \ref{symmetry2} note first that the sequence $\{\bu^\epsilon\}_{\epsilon>0}$ converges in $L^2(D,\mathbb{R}^2)$ to $\bu^0$ and $\bu^0$ is in $SBD^2(D)$.  On passage to a subsequence if necessary it is seen that that $\{\bu^\epsilon\}_{\epsilon>0}$ converges almost everywhere to $\bu^0$.  Since the subsequence $u_1^\epsilon$ is even with respect to $x_2=0$ it is evident from \eqref{approx}   that $u^0_1$ is also even, a.e. with respect to two dimensional Lebesgue measure and from \eqref{approxjump} does not jump across the $x_2=0$ axis. Similarly since the subsequence $u^\epsilon_2$ is odd we find that $u_2^0$ is odd a.e. with respect to two dimensional Lebesgue measure. From \eqref{decomposedetailsS3} and lemma \ref{measures} the jump set $\mathcal{J}_{\bu^0}$ does not intersect $\{\ell^0 <x_1< a,\, x_2=0\}$. It now follows from \eqref{approxjump} and \eqref{reg}  that $u_2^0=0$ a.e. with respect to one dimensional $\mathcal{H}^1$ measure or equivalently Lebesgue measure on  $\{\ell^0 <x_1< a,\, x_2=0\}$ and the theorem is established.

\section{Convergence of nonlocal elastodynamics}\label{s:proofofconvergence}


In this section we give the proofs of lemmas \ref{convergingrhs}, \ref{convergences},  \ref{generaltractbdry}, \ref{generaltractoncrack},  and theorems \ref{limitspace} and \ref{momentumlim}. We begin with the derivation of theorem \ref{momentumlim}.
This is done with the aid of the following variational identities over properly chosen test spaces. The first variational identity over the domain $D$ is given in the following lemma. 
\begin{lemma}\label{momentumlim2}
For  a.e. $t\in (0,T)$ we have 
\begin{equation}\label{momentumlimit}
\begin{aligned}
\rho\langle\ddot\bu^0,\bw\rangle
=-\int_{D }\mathbb{C}\mathcal{E}\bu^0:\mathcal{E}\bw\,dx+\int_{\partial D}\,\bg\cdot\bw\, d\sigma,\hbox{ for all $\bw \in \dot H^{1}(D,\mathbb{R}^2)$},
\end{aligned}
\end{equation}
where $\langle\cdot,\cdot\rangle$ is the duality paring between $\dot H^{1}(D,\mathbb{R}^2)$ and its Hilbert space dual $\dot H^{1}(D,\mathbb{R}^2)'$. 
\end{lemma}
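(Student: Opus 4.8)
The plan is to establish \eqref{momentumlimit} by passing to the limit $\epsilon_n\to 0$ in the nonlocal weak form \eqref{energy based weakform} tested against $\bw(\bx,t)=\phi(t)\bv(\bx)$, where $\phi\in C^\infty_0(0,T)$ and $\bv$ is taken first in $C^\infty(\overline D;\mathbb{R}^2)\cap\dot H^1(D;\mathbb{R}^2)$, and afterwards relaxed to an arbitrary element of $\dot H^1(D;\mathbb{R}^2)$ by density (each term of \eqref{momentumlimit} is continuous on $\dot H^1(D;\mathbb{R}^2)$: the first because $\ddot\bu^0(t)\in\dot H^1(D;\mathbb{R}^2)'$ for a.e. $t$, the last because $\bg\in H^{-1/2}(\partial D)^2$ and the trace is continuous). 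With this test field the left side of \eqref{energy based weakform} equals $\rho\int_0^T\phi'(t)\int_D\dot\bu^{\epsilon_n}\cdot\bv\,d\bx\,dt$; by the weak convergence $\dot\bu^{\epsilon_n}\rightharpoonup\dot\bu^0$ in $L^2(0,T;\dot L^2(D;\mathbb{R}^2))$ of Lemma \ref{convergences} and an integration by parts in time (valid since $\ddot\bu^0$ is the distributional time derivative of $\dot\bu^0$) this tends to $-\rho\int_0^T\phi(t)\langle\ddot\bu^0(t),\bv\rangle\,dt$. The body-force contribution converges by Lemma \ref{convergingrhs}: $\int_0^T\phi(t)\langle\bb^{\epsilon_n}(t),\bv\rangle\,dt\to\int_0^T\phi(t)\int_{\partial D}\bg(t)\cdot\bv\,d\sigma\,dt$. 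All the work is therefore in identifying the limit of the nonlocal force term
\[
A_{\epsilon_n}=\int_0^T\phi(t)\int_D\int_{\mathcal{H}_{\epsilon_n}(\bx)\cap D}|\by-\bx|\,\partial_S\mathcal{W}^{\epsilon_n}(S(\by,\bx,\bu^{\epsilon_n}(t)))\,S(\by,\bx,\bv)\,d\by\,d\bx\,dt.
\]

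I would split $S(\by,\bx,\bu^{\epsilon_n}(t))=S^-+S^+$ as in \eqref{decomposeS}; since $\Psi'(0)=0$ the integrand of $A_{\epsilon_n}$ splits accordingly into an \emph{elastic} part carrying $S^-$ and a \emph{softening} part carrying $S^+$, the latter supported on $SZ^{\epsilon_n}(t)$. For the softening part, boundedness of $\Psi'$ gives $|\by-\bx|\,|\partial_S\mathcal{W}^{\epsilon_n}(S^+)|\le \Vert\Psi'\Vert_\infty\,J^{\epsilon_n}(|\by-\bx|)\sqrt{|\by-\bx|}/(\epsilon_n^3\omega_2)$; after the substitution $\by=\bx+\epsilon_n\xi$, $\be=\xi/|\xi|$, this produces a factor of order $\epsilon_n^{-1/2}$, whereas the $\bx$-section of $SZ^{\epsilon_n}(t)$ is a strip about $\{x_2=0\}$ of width $O(\epsilon_n)$, so together with $|S(\by,\bx,\bv)|\le C$ (for smooth $\bv$) the softening part is $O(\sqrt{\epsilon_n})$ uniformly in $t$ and vanishes; the thin layer where $\mathcal{H}_{\epsilon_n}(\bx)\not\subset D$ is controlled the same way by Cauchy--Schwarz and is also negligible. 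In particular, unlike the second identity of Lemma \ref{measures}, no jump-set term survives in the limit of $A_{\epsilon_n}$, precisely because here $S^+$ is weighted by the bounded factor $\partial_S\Psi$.

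For the elastic part I would Taylor expand about zero strain. Since $\Psi(r)=h(r^2)$, one has $|\by-\bx|\,\partial_S\mathcal{W}^{\epsilon_n}(S^-)=\frac{2}{\epsilon_n^3\omega_2}J^{\epsilon_n}(|\by-\bx|)\,|\by-\bx|\,S^-\,h'(|\by-\bx|(S^-)^2)$, with $|\by-\bx|(S^-)^2\le(r^c)^2$ on the elastic branch. Writing $h'(|\by-\bx|(S^-)^2)=h'(0)+\big(h'(|\by-\bx|(S^-)^2)-h'(0)\big)$, using $|h'(\cdot)-h'(0)|\le\Vert h''\Vert_\infty\,|\by-\bx|(S^-)^2$ together with $|S^-|\le r^c/\sqrt{|\by-\bx|}$ on one of the three factors, the remainder is $O(\sqrt{\epsilon_n})$ by the uniform bound \eqref{L2bound}; the leading term, after the substitution, equals $\frac{2h'(0)}{\omega_2}\int_0^T\phi(t)\int_D\int_{\mathcal{H}_1(0)}J(|\xi|)|\xi|\,(D_{\be}^{\epsilon_n|\xi|}\bu^{\epsilon_n}\cdot\be)^-\,(D_{\be}^{\epsilon_n|\xi|}\bv\cdot\be)\,d\xi\,d\bx\,dt$. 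I would then pass to the limit by a weak-times-strong argument: the difference quotients $(D_{\be}^{\epsilon_n|\xi|}\bu^{\epsilon_n}\cdot\be)^-$ converge weakly to $\mathcal{E}\bu^0\be\cdot\be$ in $L^2$ against the measure $|\xi|J(|\xi|)\,d\xi\,d\bx$ (this is the weak limit identified in the proof of Lemma \ref{measures}, extended by zero where $\bx+\epsilon_n\xi\notin D$), while $D_{\be}^{\epsilon_n|\xi|}\bv\cdot\be\to\mathcal{E}\bv\,\be\cdot\be$ uniformly by \eqref{teststrain}; since all of these bounds are uniform in $t$, the convergence lifts to the space--time product and the limit is $\frac{2h'(0)}{\omega_2}\int_0^T\phi\int_D\int_{\mathcal{H}_1(0)}J(|\xi|)|\xi|(\mathcal{E}\bu^0\be\cdot\be)(\mathcal{E}\bv\,\be\cdot\be)\,d\xi\,d\bx\,dt$. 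Computing the angular integral $\int_{\mathcal{H}_1(0)}J(|\xi|)|\xi|\,e_ie_je_ke_l\,d\xi$ and invoking the calibration \eqref{calibrate1}--\eqref{tensor} identifies this with $\int_0^T\phi\int_D\mathbb{C}\mathcal{E}\bu^0:\mathcal{E}\bv\,d\bx\,dt$. Collecting the four limits gives $-\rho\int_0^T\phi\langle\ddot\bu^0,\bv\rangle\,dt=\int_0^T\phi\int_D\mathbb{C}\mathcal{E}\bu^0:\mathcal{E}\bv\,d\bx\,dt-\int_0^T\phi\int_{\partial D}\bg\cdot\bv\,d\sigma\,dt$ for all $\phi\in C^\infty_0(0,T)$, hence \eqref{momentumlimit} for a.e. $t\in(0,T)$ and smooth $\bv$, and density in $\bv$ finishes the proof.

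The step I expect to be the main obstacle is the simultaneous handling of the two nonlinear features of $A_{\epsilon_n}$: proving that the softening-zone part really does vanish — which hinges on the precise $O(\epsilon_n)$-geometry of $SZ^{\epsilon_n}(t)$ together with the uniform boundedness of the cohesive force — and proving that the remaining part linearises to \emph{exactly} $\mathbb{C}\mathcal{E}\bu^0:\mathcal{E}\bv$, which requires combining the weak-convergence identification of the difference quotients from Lemma \ref{measures} with the Taylor-remainder estimate while keeping every bound uniform in $t\in(0,T)$ so that the time integral and the weak--strong passage to the limit may be interchanged.
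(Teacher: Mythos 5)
Your proposal follows essentially the same route as the paper's proof: test \eqref{energy based weakform} with $\varphi(t)\bw(\bx)$, pass to the limit in the inertial and body-force terms via Lemmas \ref{convergences} and \ref{convergingrhs}, identify the limit of the nonlocal force term by the weak--strong argument using the difference-quotient weak limit $g(\bx,\xi)=\mathcal{E}\bu^0\be\cdot\be$ from the proof of Lemma \ref{measures}, interchange the time integral via \eqref{fin} and dominated convergence, and finish by density. The only place you diverge is in justifying the linearization: the paper delegates the suppression of the softening contribution and the replacement of $h'$ by $h'(0)$ to Lemma 6.5 of \cite{CMPer-Lipton} together with the energy estimate \eqref{Fprimesecond}, whereas you supply an explicit Taylor-remainder bound plus the Hypothesis \ref{hyp1} geometry of $SZ^{\epsilon_n}(t)$ and boundedness of $\Psi'$, which is a valid alternative under the stated hypotheses and yields the same $O(\sqrt{\epsilon_n})$ errors.
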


The next variational identity applies to the domains $L^\pm_\beta(t)$ adjacent to the moving crack.
\begin{lemma}\label{traction2}
The field $\ddot\bu_\tau^0(t)$ is a bounded linear functional on the spaces  \,$W^\pm(D_\beta(\tau))$ for a.e. $t\in (\tau,T)$ and we have 
\begin{equation}\label{momentumlimit2}
\begin{aligned}
\rho\langle\ddot\bu_\tau^0,\bw\rangle=-&\int_{L^\pm_\beta(\tau) }\mathbb{C}\mathcal{E}\bu^0:\mathcal{E}\bw\,\,d\bx+\int_{\partial D^\pm_\beta(\tau)}\,\bg\cdot\bw\, d\sigma,\\
& \hbox{ for all $\bw \in W^\pm(D_\beta(\tau))$}.
\end{aligned}
\end{equation}
\end{lemma}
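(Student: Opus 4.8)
The plan is to derive the variational identity \eqref{momentumlimit2} by passing to the limit $\epsilon_n\to 0$ in the nonlocal equation of motion \eqref{energy based weakform}, tested against functions supported in the layer $L^\pm_\beta(\tau)$, along the lines of Lemma \ref{momentumlim2}. The new feature, by comparison with Lemma \ref{momentumlim2}, is that the cohesive (super-critical) part of the bond force is \emph{identically} zero near the support of any such test function once $n$ is large; hence no jump term survives in the limit, and the right-hand side involves only the layer $L^\pm_\beta(\tau)$. I treat the ``$+$'' case, the ``$-$'' case being identical, and argue as follows.

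Fix $\bw\in W^+(D_\beta(\tau))$; after extension by zero it lies in $H^1(D_\beta(\tau),\mathbb{R}^2)$ with support in $\overline{L^+_\beta(\tau)}\subset\overline{\{x_2\ge 0\}}$. Since $L^+_\beta(\tau)$ is a Lipschitz domain and $W^+(D_\beta(\tau))$ is the $H^1(L^+_\beta(\tau),\mathbb{R}^2)$-closure of smooth fields with support disjoint from $\partial L^+$, it suffices to prove \eqref{momentumlimit2} for such smooth $\bw$ and then pass to the closure, both sides being continuous in $\Vert\bw\Vert_{H^1(L^+_\beta(\tau))}$ (the surface term via the trace onto $\partial D^+_\beta(\tau)$ and $\bg\in H^{-1/2}(\partial D)^2$, noting $\bg=0$ on $\partial D^+_\beta(\tau)\setminus\partial D$). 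For smooth $\bw$ and any $\psi\in C^\infty_c(\tau,T)$, insert $\bw(\bx)\psi(t)$ into \eqref{energy based weakform} and integrate by parts in $t$ using $\bu^{\epsilon_n}\in C^2([0,T];\dot L^2(D;\mathbb{R}^2))$ (Theorem~\ref{existenceuniquness}); this gives $\rho\int_\tau^T\!\int_D\dot\bu^{\epsilon_n}\!\cdot\bw\,\psi'\,d\bx\,dt=\int_\tau^T(A_n(t)+B_n(t))\psi(t)\,dt$, where $A_n(t)=\int_D\int_{\mathcal H_{\epsilon_n}(\bx)\cap D}|\by-\bx|\,\partial_S\mathcal{W}^{\epsilon_n}(S(\by,\bx,\bu^{\epsilon_n}(t)))\,S(\by,\bx,\bw)\,d\by\,d\bx$ and $B_n(t)=-\int_D\bb^{\epsilon_n}(t)\cdot\bw\,d\bx$.

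Next I would pass term by term to the limit. The left-hand side converges to $\rho\int_\tau^T\!\int_D\dot\bu^0\!\cdot\bw\,\psi'\,d\bx\,dt$ by the weak convergence $\dot\bu^{\epsilon_n}\rightharpoonup\dot\bu^0$ in $L^2(0,T;\dot L^2(D;\mathbb{R}^2))$ of Lemma \ref{convergences} (only the $\dot L^2$-part of $\bw$ matters, since the rigid-motion component pairs to zero with $\dot\bu^{\epsilon_n}$ and $\dot\bu^0$). For $A_n$, split $S=S^-+S^+$ as in \eqref{decomposeS}, so $A_n=A_n^-+A_n^+$. The sub-critical part $A_n^-$ is handled exactly as in the proof of Lemma \ref{measures} and \cite{CMPer-Lipton}: linearizing $\partial_S\Psi$ about $0$ (with $\Psi'(0)=0$ and, since $\Psi(r)=h(r^2)$, $\Psi''(0)=2h'(0)$), using the uniform convergence $S(\by,\bx,\bw)\to\mathcal{E}\bw\,\be\cdot\be$ for smooth $\bw$, the weak convergence of the truncated difference quotients of $\bu^{\epsilon_n}$ to $\mathcal{E}\bu^0\,\be\cdot\be$, and the remainder bounds of \cite{CMPer-Lipton}, one obtains $A_n^-(t)\to\int_{L^+_\beta(\tau)}\mathbb{C}\,\mathcal{E}\bu^0(t):\mathcal{E}\bw\,d\bx$ with $\mathbb{C}$ as in \eqref{tensor}--\eqref{calibrate1}; the integral localizes to $\overline{L^+_\beta(\tau)}$ because $S(\by,\bx,\bw)=0$ unless $\bx$ or $\by$ lies in $\operatorname{supp}\bw$. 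For $A_n^+$, by \eqref{decomposedetailsS3} the factor $\partial_S\mathcal{W}^{\epsilon_n}(S^+)$ vanishes off $SZ^{\epsilon_n}(t)$, so $A_n^+(t)$ reduces to an integral over pairs $(\bx,\by)\in SZ^{\epsilon_n}(t)$ with $\bx$ or $\by$ in $\operatorname{supp}\bw$. Each such pair straddles $\{x_2=0\}$; since $\operatorname{supp}\bw$ meets an $\epsilon_n$-neighbourhood of $\{x_2=0\}$ only along the crack lip $\{\ell(0)\le x_1\le\ell^0(\tau)-\beta\}$ of $\partial D^+_\beta(\tau)$ (elsewhere $\operatorname{supp}\bw$ touches $\{x_2=0\}$ only where $x_1<\ell(0)$ or $x_1>\ell^0(\tau)$, which for $n$ large is disjoint from the softening centerline \eqref{sc}), the segment $\bx+s(\by-\bx)$ crosses $\{x_2=0\}$ at a point with first coordinate at most $\ell^0(\tau)-\beta+\epsilon_n$. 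Because $\ell^{\epsilon_n}(t)\to\ell^0(t)\ge\ell^0(\tau)$, for all $n$ large (so $\ell^{\epsilon_n}(t)>\ell^0(\tau)-\beta/2$ and $\epsilon_n<\beta/2$) this crossing point lies in $[\ell(0),\ell^{\epsilon_n}(t)]$, hence $(\bx,\by)\in FZ^{\epsilon_n}(t)$ and $\bff^{\epsilon_n}(\bx,\by)=0$, i.e. $\partial_S\mathcal{W}^{\epsilon_n}=0$ there; thus $A_n^+(t)\equiv 0$ for all large $n$. Finally $B_n(t)\to-\int_{\partial D^+_\beta(\tau)}\bg(t)\cdot\bw\,d\sigma$, exactly as in the proof of Lemma \ref{convergingrhs} applied to the upper boundary layer (the lower layer does not meet $\operatorname{supp}\bw$).

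Collecting the limits, for every $\psi\in C^\infty_c(\tau,T)$ one has $\rho\int_\tau^T(\int_D\dot\bu^0\cdot\bw\,d\bx)\psi'\,dt=\int_\tau^T(\int_{L^+_\beta(\tau)}\mathbb{C}\,\mathcal{E}\bu^0:\mathcal{E}\bw\,d\bx-\int_{\partial D^+_\beta(\tau)}\bg\cdot\bw\,d\sigma)\psi\,dt$. The right-hand member is, as a function of $t$, integrable on $(\tau,T)$ (since $\bu^0\in SBD^2(D)$ gives $\mathcal{E}\bu^0(t)\in L^2$ with time-integrable norm, and $\bg$ is bounded), so it is minus the distributional $t$-derivative of $t\mapsto\rho\int_D\dot\bu^0(t)\cdot\bw\,d\bx$; denoting this derivative $\rho\langle\ddot\bu^0_\tau(t),\bw\rangle$ yields \eqref{momentumlimit2} for a.e. $t\in(\tau,T)$ and smooth $\bw$, together with the bound $|\langle\ddot\bu^0_\tau(t),\bw\rangle|\le C(t)\Vert\bw\Vert_{H^1(L^+_\beta(\tau))}$. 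Running this over a countable dense family of $\bw$ and invoking continuity of both sides extends the identity and the bound to all $\bw\in W^+(D_\beta(\tau))$, so in particular $\ddot\bu^0_\tau(t)\in W^+(D_\beta(\tau))'$ for a.e. $t$, which is the assertion. I expect the main obstacle to be the geometric bookkeeping just described: checking that the support of an admissible test function meets the softening zone $SZ^{\epsilon_n}(t)$ only through pairs already in the failure zone $FZ^{\epsilon_n}(t)$ — the place where Hypothesis \ref{hyp1}, the strictly positive cut-back $\beta$, and the pointwise convergence $\ell^{\epsilon_n}\to\ell^0$ are used — so that the cohesive contribution drops out identically rather than only in the limit.
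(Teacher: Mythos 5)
Your proof is essentially the same as the paper's: both substitute a separated trial $\bw(\bx)\varphi(t)$ with $\bw\in W^{\pm}(D_\beta(\tau))$ (extended by zero) and $\varphi\in C^{\infty}_c(\tau,T)$ into the nonlocal weak form \eqref{energy based weakform} and pass to the $\epsilon_n\to0$ limit term by term, and both rest on the same crucial observation --- that once $n$ is large enough that $\ell^{\epsilon_n}(t)>\ell^0(\tau)-\beta$ and $\epsilon_n<\beta/2$, the cohesive (super-critical) part of the bond force pairs to zero with $S(\by,\bx,\bw)$ for admissible $\bw$, which is exactly what the paper's Eq.~\eqref{ident98} encodes. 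Your organizational deviations are minor: you reduce first to smooth $\bw$ and pass to the closure, whereas the paper works with general $\bw\in W^{\pm}(D_\beta(\tau))$ directly and therefore needs the indicator $\chi$ of \eqref{infervalfunction} to obtain the strong $L^2$ convergence \eqref{strong} of the difference quotient of $\bw$ on $D_\beta(\tau)$; and on the time-derivative side you use only the weak convergence $\dot\bu^{\epsilon_n}\rightharpoonup\dot\bu^0$ of Lemma \ref{convergences} and recover $\ddot\bu^0_\tau\in W^{\pm}(D_\beta(\tau))'$ a posteriori by a distributional-derivative identification, whereas the paper quotes Theorem \ref{limitspace}; but the paper's proof of that theorem (Eqs.~\eqref{linfinitybounded200}--\eqref{weakidentityy}) is the same identification, so this is an inlining rather than a new route.

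One imprecision worth flagging: in disposing of the super-critical part, your second case --- that $\operatorname{supp}\bw$ meets $\{x_2=0\}$ elsewhere only where $x_1>\ell^0(\tau)$, ``which for $n$ large is disjoint from the softening centerline'' --- is not justified by the threshold $\ell^0(\tau)$ alone, since the softening centerline $S^{\epsilon_n}(t)$ of \eqref{sc} advances past $\ell^0(\tau)$ for $t\in(\tau,T)$. What actually closes this case is that $\partial L^{+}\cap\{x_2=0\}$ lies beyond the crack's reach throughout $[0,T]$, guaranteed by the standing hypothesis $\ell^{\epsilon}(T)<a-\delta$ together with the geometry of the dashed boundary in Figure~\ref{L-upper}; the paper's Eq.~\eqref{ident98} relies implicitly on the same fact, so this is a shared subtlety rather than a gap unique to your argument.
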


We now prove theorem \ref{momentumlim} using lemmas \ref{generaltractbdry} and \ref{generaltractoncrack} and the variational identities given above by Lemmas \ref{momentumlim2} and \ref{traction2}. We may choose test functions $\bw$ in $H^1_0(D,\mathbb{R}^2)\subset \dot H^1(D,\mathbb{R}^2)$ in \eqref{momentumlimit} to see that
 \begin{equation}\label{momore}
\rho\ddot\bu_\tau^0=div\left(\mathbb{C}\mathcal{E}\bu^0\right) 
\end{equation}
as elements of $H^{-1}(D,\mathbb{R}^2)$ and \eqref{mo} of theorem  \ref{momentumlim} is established.
The traction on $\partial D$ given by \eqref{boundary tracesAB} now follows immediately from lemma \ref{generaltractbdry} and lemma \ref{momentumlim2}. Similarly the zero traction force acting on the component of $\partial D_{\beta}(\tau)^\pm$  lying on the crack faces  given by \eqref{boundary tracesG} now follows immediately from lemma \ref{generaltractoncrack}  and lemma \ref{traction2}. This concludes the proof of theorem \ref{momentumlim}.

Lemmas \ref{generaltractbdry} and \ref{generaltractoncrack} will be shown to follow from a generalized trace formula on the boundary of a Lipschitz domain $\Omega$. We call the domain $\Omega$ a polygon when it is a Lipschitz domain with smooth curvilinear arcs for edges $E_i$,  $i=1,\ldots M$, connected by vertices. We introduce the Sobolev space defined on $\Omega$ given by
\begin{equation}\label{cracklayertest}
H^{1,0}(\Omega,\mathbb{R}^2)=\left\{\bw\in H^1(\Omega,\mathbb{R}^2)\text{ and }\gamma \bw=0 \text{ on  a subset of edges}\right\},
\end{equation}
here $H^{1,0}(\Omega,\mathbb{R}^2)\subset \dot H^{1}(\Omega,\mathbb{R}^2)$.

\begin{lemma}\label{Gentracepologinal}
Given a domain $\Omega$ with Lipschitz boundary and let $\bu^0$ be an element of $SBV^2(\Omega)$, let $\bff$ be an element of $\dot H^1(\Omega,\mathbb{R}^2)'$, and
 \begin{equation}\label{momoremore}
div\left(\mathbb{C}\mathcal{E}\bu^0\right)=\bff
\end{equation}
as elements of $H^{-1}(\Omega,\mathbb{R}^2)$. 
Suppose first that test functions $\bw$ belong to $\dot H^1(\Omega,\mathbb{R}^2)$
and define $ \mathbb{C}\,\mathcal{E}\bu^0\bn$ on $\partial \Omega$ by
\begin{equation}\label{generaltwo}
\langle \mathbb{C}\,\mathcal{E}\bu^0\bn,\gamma\bw\rangle=\int_{\Omega}\mathbb{C}\,\mathcal{E}\bu^0:\mathcal{E}{\bw}\,d\bx+\langle\bff,\bw\rangle
\end{equation}
for all $\bw$ in $\dot H^1(\Omega,\mathbb{R}^2)$.
Then the functional $\langle \mathbb{C}\,\mathcal{E}\bu^0\bn,\gamma\bw\rangle$ is uniquely defined for all test functions $\bw$ in $\dot H^1(\Omega,\mathbb{R}^2)$, hence $\mathbb{C}\,\mathcal{E}\bu^0\bn$ belongs to $H^{-1/2}(\partial \Omega)$. 

Next suppose $\Omega$ is a polygon. Let $\bw$ belong to $H^{1,0}(\Omega,\mathbb{R}^2)$ and let $\bff$ be an element of $H^{1,0}(\Omega,\mathbb{R}^2)'$ and let $div\left(\mathbb{C}\mathcal{E}\bu^0\right)$ and $\bff$ satisfy \eqref{momoremore} as  elements of $H^{-1}(\Omega,\mathbb{R}^2)$.  Define $ \mathbb{C}\,\mathcal{E}\bu^0\bn$ on $\partial \Omega$ by
\begin{equation}\label{generaltwo2}
\langle \mathbb{C}\,\mathcal{E}\bu^0\bn,\gamma\bw\rangle=\int_{\Omega}\mathbb{C}\,\mathcal{E}\bu^0:\mathcal{E}{\bw}\,d\bx+\langle\bff,\bw\rangle
\end{equation}
for all $\bw$ in $H^{1,0}(\Omega,\mathbb{R}^2)$.
The functional $\langle \mathbb{C}\,\mathcal{E}\bu^0\bn,\gamma\bw\rangle$ is uniquely defined for all test functions $\bw$ in $H^{1,0}(\Omega,\mathbb{R}^2)$, hence $\mathbb{C}\,\mathcal{E}\bu^0\bn$ belongs to the dual space $H_{00}^{-1/2}(\partial \Omega)$. 

\end{lemma}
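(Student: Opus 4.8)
The plan is to treat the right-hand side of \eqref{generaltwo} (and of \eqref{generaltwo2}) as the \emph{definition} of $\langle \mathbb{C}\,\mathcal{E}\bu^0\bn,\gamma\bw\rangle$ and to verify that the definition is consistent, i.e.\ that the right-hand side is a bounded linear functional of $\bw$ that depends on $\bw$ only through its trace $\gamma\bw$; it then descends to a bounded linear functional on the pertinent trace space, which is the assertion. Set
\[
F(\bw):=\int_{\Omega}\mathbb{C}\,\mathcal{E}\bu^0:\mathcal{E}\bw\,d\bx+\langle\bff,\bw\rangle .
\]
Since $\bu^0\in SBV^2(\Omega)$, the density $\mathcal{E}\bu^0$ belongs to $L^2(\Omega)$, so the volume term is bounded on $H^1(\Omega,\mathbb{R}^2)$ by $C\,\Vert\mathcal{E}\bu^0\Vert_{L^2(\Omega)}\,\Vert\bw\Vert_{H^1(\Omega,\mathbb{R}^2)}$; since $\bff$ is by hypothesis a bounded functional on $\dot H^1(\Omega,\mathbb{R}^2)$ in the first case and on $H^{1,0}(\Omega,\mathbb{R}^2)$ in the second, $F$ is a bounded linear functional on $\dot H^1(\Omega,\mathbb{R}^2)$, respectively on $H^{1,0}(\Omega,\mathbb{R}^2)$.

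The key step is to show that $F$ annihilates the kernel of $\gamma$, viewed as a subspace of $\dot H^1(\Omega,\mathbb{R}^2)$ in the first case and of $H^{1,0}(\Omega,\mathbb{R}^2)$ in the second; in both cases this kernel consists of fields $\bw\in H^1_0(\Omega,\mathbb{R}^2)$. For such $\bw$, the symmetry of $\mathbb{C}\,\mathcal{E}\bu^0$ makes it annihilate the skew part of $\nabla\bw$, so $\mathbb{C}\,\mathcal{E}\bu^0:\mathcal{E}\bw=\mathbb{C}\,\mathcal{E}\bu^0:\nabla\bw$ a.e.\ in $\Omega$; integrating and invoking \eqref{momoremore}, namely ${\rm div}\,(\mathbb{C}\,\mathcal{E}\bu^0)=\bff$ in $H^{-1}(\Omega,\mathbb{R}^2)$, yields
\[
\int_{\Omega}\mathbb{C}\,\mathcal{E}\bu^0:\mathcal{E}\bw\,d\bx=\int_{\Omega}\mathbb{C}\,\mathcal{E}\bu^0:\nabla\bw\,d\bx=-\langle{\rm div}\,(\mathbb{C}\,\mathcal{E}\bu^0),\bw\rangle=-\langle\bff,\bw\rangle ,
\]
hence $F(\bw)=0$. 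Consequently $F(\bw)=F(\bw')$ whenever $\gamma\bw=\gamma\bw'$, so $F$ factors through the trace, $F(\bw)=\tilde F(\gamma\bw)$, for a well-defined linear functional $\tilde F$ on the range of $\gamma$.

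The last step identifies this range together with its quotient norm. In the first case I would check that $\gamma$ maps $\dot H^1(\Omega,\mathbb{R}^2)$ \emph{onto} $H^{1/2}(\partial\Omega)^2$: given $\bg$, take any $H^1$-extension and add to it a field in $H^1_0(\Omega,\mathbb{R}^2)$ so that the sum is $L^2$-orthogonal to the rigid motions, which is possible because $H^1_0(\Omega,\mathbb{R}^2)$ projects onto the finite-dimensional rigid-motion space (otherwise a nonzero rigid motion would be $L^2$-orthogonal to $H^1_0(\Omega,\mathbb{R}^2)$, contradicting density). By the open mapping theorem the quotient norm on $\dot H^1(\Omega,\mathbb{R}^2)/\ker\gamma$ is then equivalent to $\Vert\cdot\Vert_{H^{1/2}(\partial\Omega)^2}$, so $|\tilde F(\bg)|\le C\inf\{\Vert\bw\Vert_{H^1(\Omega,\mathbb{R}^2)}:\gamma\bw=\bg\}\le C'\,\Vert\bg\Vert_{H^{1/2}(\partial\Omega)^2}$, and $\mathbb{C}\,\mathcal{E}\bu^0\bn:=\tilde F$ is the desired element of $H^{-1/2}(\partial\Omega)^2$, for which \eqref{generaltwo} holds by construction. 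In the second case, with $\Omega$ a polygon with smooth curvilinear edges, the trace of $H^{1,0}(\Omega,\mathbb{R}^2)$ onto the union of the edges on which its functions are unconstrained is precisely the weighted half-integer space $H_{00}^{1/2}$ of Lions--Magenes \cite{LM}, while the kernel of $\gamma$ on $H^{1,0}(\Omega,\mathbb{R}^2)$ is again $H^1_0(\Omega,\mathbb{R}^2)$; running the same factorization-plus-open-mapping argument produces $\mathbb{C}\,\mathcal{E}\bu^0\bn:=\tilde F$ in the dual $H_{00}^{-1/2}(\partial\Omega)$, and \eqref{generaltwo2} holds.

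I expect the main obstacle to be precisely this last step --- knowing that the trace operator, once \emph{restricted} to the constrained space ($\dot H^1(\Omega,\mathbb{R}^2)$, respectively $H^{1,0}(\Omega,\mathbb{R}^2)$), remains surjective onto the claimed trace space and is an open map, so that the bound on $F$ in terms of $\Vert\bw\Vert_{H^1}$ can be upgraded to the trace-space norm and $\mathbb{C}\,\mathcal{E}\bu^0\bn$ lands in the correct dual. For the polygonal case this relies on the Lions--Magenes description of $H_{00}^{1/2}$ on curvilinear arcs together with the vertex compatibility conditions; see also \cite{McLean}. By contrast, the algebraic core of the lemma --- that the right-hand sides of \eqref{generaltwo} and \eqref{generaltwo2} depend on $\bw$ only through $\gamma\bw$ --- is an immediate consequence of \eqref{momoremore} and the symmetry of $\mathbb{C}\,\mathcal{E}\bu^0$.
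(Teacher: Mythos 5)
Your proposal is correct and follows essentially the same route as the paper: both prove that the right-hand side functional depends only on $\gamma\bw$ by observing that it vanishes on $H^1_0(\Omega,\mathbb{R}^2)$ via the weak form of \eqref{momoremore}, and both then bound it by the trace-space norm. The only cosmetic difference is that the paper works with an explicit continuous right inverse (lift) $\tau$ of the trace operator and verifies the definition is independent of the lift, whereas you phrase the same step abstractly as factoring through the quotient by $\ker\gamma$ and invoking the open mapping theorem; these are equivalent in the Hilbert-space setting.
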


We now prove lemmas \ref{generaltractbdry} and \ref{generaltractoncrack}. With the hypothesis of  lemma  \ref{generaltractbdry}  we apply lemma \ref{momentumlim2} with test functions $\bw$ in $H^1_0(D,\mathbb{R}^2)\subset \dot H^1(D,\mathbb{R}^2)$ in \eqref{momentumlimit} to see as before
 \begin{equation}\label{momoremore2}
\rho\ddot\bu^0=div\left(\mathbb{C}\mathcal{E}\bu^0\right),
\end{equation}
as elements of $H^{-1}(\Omega,\mathbb{R}^2)$. Then we set $\bff=\rho\ddot\bu^0$ and lemma \ref{generaltractbdry} follows immediately from the first part of lemma \ref{Gentracepologinal}.
Now we see that the domains $L^{\pm}_\beta(t)$ of lemma \ref{generaltractoncrack} are polygons. With the hypothesis of  lemma \ref{generaltractoncrack}  we apply lemma \ref{traction2} and first consider test functions $\bw$ in $W^\pm(D_{\beta}(\tau))$ that vanish on the boundary of $L^\pm_\beta(t)$. Substitution into  \eqref{momentumlimit2}  gives
 \begin{equation}\label{momoremore3}
\rho\ddot\bu_\tau^0=div\left(\mathbb{C}\mathcal{E}\bu^0\right),
\end{equation}
as elements of $H^{-1}(L^\pm_\beta(t),\mathbb{R}^2)$. 
Note that $\bw\in W^\pm(D_\beta(t))$ implies that the restriction of $\bw$ to $L^\pm_\beta(t)$ belongs to
\begin{equation}\label{cracklayernormalspace}
H^{1,0}(L^\pm_\beta(t),\mathbb{R}^2)=\left\{\bw\in H^1(L^\pm_\beta(t),\mathbb{R}^2)\text{ and }\gamma \bw =0 \text{ on }  \partial L^\pm \right\},
\end{equation}
so we set we set $\bff=\rho\ddot\bu^0$ and lemma \ref{generaltractoncrack} follows immediately from the second part of lemma \ref{Gentracepologinal}.

We now prove the lemmas introduced in this section. We  begin with the proof of lemma \ref{Gentracepologinal} following  \cite{McLean}. To fix ideas we prove the second part of lemma \ref{Gentracepologinal} noting the first part follows identical lines. First note if $\bu^0$ belongs to $SBD^2(\Omega)$ then $\int_{\Omega}\,\mathbb{C}\mathcal{E}\bu^0:\mathcal{E}\bw\,d\bx$ as a map from $\bw\in H^{1,0}(\Omega,\mathbb{R}^2)$ to $\mathbb{R}$ belongs to $H^{1,0}(\Omega,\mathbb{R}^2)'$. Second note that the trace operator mapping $H^{1,0}(\Omega,\mathbb{R}^2)$ to $H_{00}^{-1/2}(\Omega)$ has a continuous right inverse denoted by $\tau$. We define $\tilde\bg$ by
\begin{equation}\label{generaltwo4}
\langle \tilde\bg,\bv\rangle=\int_{\Omega}\mathbb{C}\,\mathcal{E}\bu^0:\mathcal{E}{\tau\bv}\,d\bx+\langle\bff,\tau\bv\rangle
\end{equation}
for all $\bv$ in $H^{-1/2}_{00}(\partial \Omega)$ to show
\begin{equation}\label{generaltwo5}
\langle \tilde\bg,\gamma\bw\rangle=\int_{\Omega}\mathbb{C}\,\mathcal{E}\bu^0:\mathcal{E}{\bw}\,d\bx+\langle\bff,\bw\rangle
\end{equation}
for all $\bw$ in $H^{1,0}(\Omega,\mathbb{R}^2)$. 
To see this pick $\bw$ in $H^{1,0}(\Omega,\mathbb{R}^2)$ and set $\bw_0=\bw-\tau\gamma\bw$ so  $\bw_0$ is in $H^1_0(\Omega,\mathbb{R}^2)$ and from \eqref{momoremore} we have
\begin{equation}\label{eqlib}
\begin{aligned}
-\int_{\Omega}\,\mathbb{C}\mathcal{E}\bu^0:\mathcal{E}\bw_0\,d\bx=\langle\bw_0,\bff \rangle,
\end{aligned}
\end{equation}
so
\begin{equation}\label{eqlib6}
\begin{aligned}
-\int_{\Omega}\,\mathbb{C}\mathcal{E}\bu^0:\mathcal{E}\bw\,d\bx+\int_{\Omega}\,\mathbb{C}\mathcal{E}\bu^0:\mathcal{E}\tau\gamma\bw\,d\bx=\langle\bw,\bff \rangle-\langle\tau\gamma\bw,\bff \rangle.
\end{aligned}
\end{equation}
Equation \eqref{generaltwo5} follows directly from \eqref{eqlib6}, \eqref{generaltwo4}, and manipulation.
Now we show that the definition of $\tilde\bg$ given by \eqref{generaltwo4} is unique and independent of the choice of right inverse (lift) $\tau$.
Suppose we have $\bg^*$ defined by the lift $\tau^*$ given by
\begin{equation}\label{generaltwo7}
\langle \bg^*,\bv\rangle=\int_{\Omega}\mathbb{C}\,\mathcal{E}\bu^0:\mathcal{E}{\tau^*\bv}\,d\bx+\langle\bff,\tau^*\bv\rangle
\end{equation}
for all $\bv$ in $H^{-1/2}_{00}(\partial \Omega)$. From \eqref{generaltwo5} and linearity we get
\begin{equation}\label{generaltwo8}
\langle \tilde\bg-\bg^*,\gamma\bw\rangle=0,
\end{equation}
for all $\bw$ in $H^{1,0}(\Omega,\mathbb{R}^2)$ and uniqueness follows. We define $\mathbb{C}\mathcal{E}\bu^0\bn=\tilde\bg$ and the second part of lemma \ref{Gentracepologinal} is proved.

Next we give the proof of lemma \ref{convergingrhs}. First we show that the sequence $\{\bb^{\epsilon_n}(t)\}$ is uniformly bounded in $H^1(D,\mathbb{R}^2)'$ for $t\in [0,T]$. Let $\chi^{\epsilon_n}=\chi_+^{\epsilon_n}+\chi_-^{\epsilon_n}$ where $\chi^{\epsilon_n}_\pm$ are the indicator functions of the body force layers defined in \eqref{layers} so recalling \eqref{l2} we have for any $\bw \in H^1(D,\mathbb{R}^2)$,
\begin{equation}\label{newident}
\begin{aligned}
&\int_{D}\bb^{\epsilon_n}(\bx,t)\cdot\bw(\bx)\,d\bx=\int_{D}\frac{1}{\epsilon_n}\chi^{\epsilon_n}(\bx)\bg(x_1,t)\cdot\bw(\bx)\,d\bx\\
&=\int_{D}\frac{1}{\sqrt{\epsilon_n}}\chi^{\epsilon_n}(\bx)\bg(x_1,t)\cdot\frac{1}{\sqrt{\epsilon_n}}\chi^{\epsilon_n}(\bx)\bw(\bx)\,d\bx\\
&\leq \left(\int_{D}\frac{1}{\epsilon_n}\chi^{\epsilon_n}|\bg(t)|^2\,d\bx\right)^{1/2}\left(\int_{D}\frac{1}{\epsilon_n}\chi^{\epsilon_n}(\bx)|\bw|^2\,d\bx\right)^{1/2}\\
&\leq 2\Vert g_+(t)\Vert_{L^2(\theta,a-\theta)}I_{\epsilon_n}.
\end{aligned}
\end{equation}
Here $I_{\epsilon_n}$ is given by
\begin{equation}\label{newident22}
\begin{aligned}
I_{\epsilon_{n}}&=\left(\int_{D}\frac{1}{\epsilon_n}\chi^{\epsilon_n}(\bx)|\bw|^2\,d\bx\right)^{1/2}\\
&=\left(\int_0^1\int_\theta^{a-\theta}|\bw(x_1,\frac{b}{2}+\epsilon_n(y_2-1))|^2dx_1dy_2\right.\\
&+\left. \int_0^1\int_\theta^{a-\theta}|\bw(x_1,-\frac{b}{2}+\epsilon_n(1-y_2)|^2dx_1dy_2\right)^{1/2}
\end{aligned}
\end{equation}
where the change of variables $x_2=\pm\frac{b}{2}\mp\epsilon_n\pm\epsilon_n y_2$ has been made. From the change of variable it is evident that the factor $I_{\epsilon_n}$ is bounded above by
\begin{equation}\label{newident2}
\begin{aligned}
I_{\epsilon_n}\leq\left(\int_0^1\int_{\partial D_{\delta(y)}}|\bw|^2\,ds\,dy\right)^{1/2}
\end{aligned}
\end{equation}
where $D_{\delta(y)}=\{\bx\in D:\,\text{dist}(\bx,\partial D)>{\delta(y)}\}$ and $\delta (y)=\epsilon_n  (1-y)$, for $0<y<1$.
Since the trace operator is a bounded linear transformation between $H^1(D_{\delta(y)},\mathbb{R}^2)$ and $L^2(\partial D_{\delta(y)})^2$ we have
\begin{equation}\label{newident3}
\begin{aligned}
\int_{\partial D_{\delta(y)}}|\bw|^2\,ds\leq C_{\delta(y)}\Vert\bw\Vert^2_{H^1(D_{\delta(y)},\mathbb{R}^2)}\leq C_{\delta(y)}\Vert\bw\Vert^2_{H^1(D,\mathbb{R}^2)}.
\end{aligned}
\end{equation}
Additionally $C_{\delta(y)}$ depends only on the Lipschitz constant of the boundary \cite{Evans2} so for the case at hand we see that
\begin{equation}\label{Lip}
\begin{aligned}
\sup_{y\in[0,1]}\{C_{\delta(y)}\}<\infty,
\end{aligned}
\end{equation}
and from \eqref{newident}, \eqref{newident2}, and \eqref{Lip}  we conclude that there is a constant $C$ independent of $t$ and $\epsilon_n$ such that
\begin{equation}\label{newident4}
\begin{aligned}
|\int_{D}\bb^{\epsilon_n}(\bx,t)\cdot\bw(\bx)\,d\bx|\leq  C\Vert\bw\Vert^2_{H^1(D,\mathbb{R}^2)},
\end{aligned}
\end{equation}
so
\begin{equation}\label{bounded}
\begin{aligned}
\sup_{\epsilon_n>0}\int_0^T\Vert \bb^{\epsilon_n}(t)\Vert^2_{H^1(D;\mathbb{R}^2)'}dt<\infty.
\end{aligned}
\end{equation}
Thus we can pass to a subsequence also denoted by $\{\bb^{\epsilon_n}\}_{n=1}^\infty$ that converges weakly to $\bb^0$ in $L^2(0,T;H^1(D;\mathbb{R}^2)')$. Next we identify the weak limit $\bb^0(t)$ for a dense set of trial fields. Let $\bw\in C^1(\overline{D},\mathbb{R}^2)$ then a change of variables $x_2=\pm\frac{b}{2}\mp\epsilon_n\pm\epsilon_n y_2$ gives
\begin{equation}\label{newident8}
\begin{aligned}
\int_{D}\bb^{\epsilon_n}(\bx,t)\cdot\bw(\bx)\,d\bx&=\int_{D}\frac{1}{\epsilon_n}\chi^{\epsilon_n}(\bx)\bg(x_1,t)\cdot\bw(\bx)\,d\bx\\
&=\int_0^1\int_\theta^{a-\theta}\,g_+(x_1,t)\be^2\cdot\bw(x_1,\frac{b}{2}+\epsilon_n(y_2-1)\,dx_1\,dy_2\\
&+\int_0^1\int_\theta^{a-\theta}\,g_-(x_1,t)\be^2\cdot\bw(x_1,-\frac{b}{2}+\epsilon_n(1-y_2)\,dx_1\,dy_2.
\end{aligned}
\end{equation}
One passes to the $\epsilon_n\rightarrow 0$ limit in \eqref{newident8} applying the uniform continuity of $\bw$ to obtain
\begin{equation}\label{newident10}
\begin{aligned}
\lim_{\epsilon_n\rightarrow 0}\int_{D}\bb^{\epsilon_n}(\bx,t)\cdot\bw(\bx)\,d\bx=\int_{\partial D}\bg\cdot\bw\,d\sigma.
\end{aligned}
\end{equation}
Lemma \ref{convergingrhs} now follows noting that $C^1(\overline{D},\mathbb{R}^2)$ is dense in $H^1(D,\mathbb{R}^2)$.

We now establish lemma \ref{convergences}. The strong convergence 
\begin{equation}\label{convgences1}
\begin{aligned}
\bu^{\epsilon_n}\rightarrow \bu^0 & \hbox{  \rm strong in } C([0,T];\dot L^2(D;\mathbb{R}^2))
\end{aligned}
\end{equation}
follows immediately from the same arguments used to establish
theorem 5.1 of \cite{CMPer-Lipton}.
The weak convergence
\begin{equation}\label{convgences2}
\begin{aligned}
\dot\bu^{\epsilon_n}\rightharpoonup \dot\bu^0 &\hbox{ \rm weakly in  }L^2(0,T;\dot L^2(D;\mathbb{R}^2))\end{aligned}
\end{equation}
follows noting that theorem 2.2 of \cite{CMPer-Lipton} shows that
\begin{equation}
\begin{aligned}
\sup_{\epsilon_n>0}\int_0^T\Vert \dot\bu^{\epsilon_n}(t)\Vert^2_{L^2(D;\mathbb{R}^2)}dt<\infty.
\end{aligned}
\end{equation}
Thus we can pass to a subsequence also denoted by $\{\dot\bu^{\epsilon_n}\}_{n=1}^\infty$ that converges weakly to $\dot\bu^0$ in $L^2(0,T;\dot L^2(D;\mathbb{R}^2))$. 

To prove
\begin{equation}\label{convgences3}
\begin{aligned}
\ddot\bu^{\epsilon_n}\rightharpoonup \ddot\bu^0 &\hbox{ \rm weakly in }L^2(0,T;\dot H^1(D;\mathbb{R}^2)')
\end{aligned}
\end{equation}
we must show that
\begin{equation}\label{blf}
\sup_{\epsilon_n>0}\int_0^T\Vert\ddot\bu^{\epsilon_n}(t)\Vert^2_{\dot H^1(D;\mathbb{R}^2)'}\,dt<\infty,
\end{equation}
and existence of a weakly converging sequence follows.
We multiply \eqref{energy based model2} with a test function $\bw$ from $\dot H^1(D;\mathbb{R}^2)$ and integrate over $D$.  

A straightforward integration  by parts gives 
\begin{equation}\label{energy based weakform est1}
\begin{aligned}
&\int_D\ddot{\bu}^{\epsilon_n}(\bx,t)\cdot {\bw}(\bx)d\bx\\
&=-\frac{1}{\rho}
\int_D \int_{H_{\epsilon_n}(\bx)\cap D} |\by-\bx|\partial_S\mathcal{W}^{\epsilon_n}(S(\by,\bx,\bu^{\epsilon_n}(t)))S(\by,\bx,\bw)\,d\by d\bx\\
&+\frac{1}{\rho}\int_D\bb^{\epsilon_n}(\bx,t)\cdot\bw(\bx)d\bx,
\end{aligned}
\end{equation}
and we now estimate the right hand side of \eqref{energy based weakform est1}.
The first term on the righthand side is denoted by $I^{\epsilon_n}$ and we change variables $\by=\bx+\epsilon\xi$, $|\xi|<1$, with $d\by=\epsilon_n^2d\xi$  and write out $\partial_S\mathcal{W}^\epsilon(S(\by,\bx,\bu^\epsilon(t)))$ to get
\begin{equation}\label{est1}
\begin{aligned}
&I^{\epsilon_n}=-\frac{1}{\rho\omega_2}\int_{{D}\times{\mathcal{H}_1(0)}}\omega(\bx,,\epsilon_n\xi)|\xi|J(|\xi|)h'\left(\epsilon_n|\xi||D_{\be}^{\epsilon_n|\xi|}\bu^{\epsilon_n}\cdot \be|^2\right)\\
&\times2(D_{\be}^{\epsilon_n|\xi|}\bu^{\epsilon_n}\cdot \be) (D_{\be}^{\epsilon_n|\xi|}\bw\cdot \be)\,d\xi\,d\bx,
\end{aligned}
\end{equation}
where $\omega(\bx,\epsilon_n\xi)$ is unity if $\bx+\epsilon_n\xi$ is in $D$ and zero otherwise. We define the sets
\begin{equation}\label{overstress}
\begin{aligned}
A^-_{\epsilon_n}&=\left\{(\bx,\xi) \hbox{ in }D\times\mathcal{H}_1(0);\,|D_{\be}^{\epsilon_n|\xi|}\bu^{\epsilon_n}\cdot\be|<\frac{{r}^c}{\sqrt{\epsilon_n|\xi|}}\right\}\\
A^+_{\epsilon_n}&=\left\{(\bx,\xi) \hbox{ in }D\times\mathcal{H}_1(0);\,|D_{\be}^{\epsilon_n|\xi|}\bu^{\epsilon_n}\cdot\be|\geq\frac{{r}^c}{\sqrt{\epsilon_n|\xi|}}\right\},
\end{aligned}
\end{equation}
with $D\times\mathcal{H}_1(0)=A^-_{\epsilon_n}\cup A^+_{\epsilon_n}$ and we write
\begin{equation}\label{i[plus}
I^{\epsilon_n} =I_1^{\epsilon_n}+I_2^{\epsilon_n},
\end{equation}
where
\begin{equation}\label{est2}
\begin{aligned}
I_1^{\epsilon_n}=-\frac{1}{\rho\omega_2}\int_{D\times\mathcal{H}_1(0)\cap A_{\epsilon_n}^-}\omega(\bx,,\epsilon_n\xi)|\xi|J(|\xi|)h'\left(\epsilon_n|\xi||D_{\be}^{\epsilon_n|\xi|}\bu^{\epsilon_n}\cdot \be|^2\right)\\
\times 2(D_{\be}^{\epsilon_n|\xi|}\bu^{\epsilon_n}\cdot \be) (D_{\be}^{\epsilon_n|\xi|}\bw\cdot \be)\,d\xi\,d\bx,\\
I_2^{\epsilon_n}=-\frac{1}{\rho\omega_2}\int_{D\times\mathcal{H}_1(0)\cap A_{\epsilon_n}^+}\omega(\bx,,\epsilon_n\xi)|\xi|J(|\xi|)h'\left(\epsilon_n|\xi||D_{\be}^{\epsilon_n|\xi|}\bu^{\epsilon_n}\cdot \be|^2\right)\\
\times2(D_{\be}^{\epsilon_n|\xi|}\bu^{\epsilon_n}\cdot \be) (D_{\be}^{\epsilon_n|\xi|}\bw\cdot \be)\,d\xi\,d\bx,
\end{aligned}
\end{equation}
In what follows we will denote positive constants independent of $\bu^{\epsilon_n}$ and $\bw\in \dot H^1(D;\mathbb{R}^2)$ by $C$. 
First note that $h$ is concave so $h'(r)$ is monotone decreasing for $r\geq 0$ and from Cauchy's inequality, and \eqref{L2bound} one has
\begin{equation}\label{est3}
\begin{aligned}
|I_1^{\epsilon_n}|&\leq\frac{2h'(0)C}{\rho\omega_2}\left(\int_{D\times\mathcal{H}_1(0)\cap A_{\epsilon_n}^-}\omega(\bx,,\epsilon_n\xi)|D_{\be}^{\epsilon_n|\xi|}\bw\cdot \be)|^2\,d\xi\,d\bx\right)^{1/2},\\
&\leq \frac{2h'(0)C}{\rho\omega_2}\left(\int_{\mathcal{H}_1(0)}\int_{D}\omega(\bx,,\epsilon_n\xi)|D_{\be}^{\epsilon_n|\xi|}\bw\cdot \be)|^2\,d\bx\,d\xi\right)^{1/2},
\end{aligned}
\end{equation}
Since $\bx$ and $\bx+\epsilon_n\xi$ belong to $D$ we write $\xi=|\xi|\be$ where $\be=\xi/|\xi|$ and calculation gives
\begin{equation}\label{diffquo2}
 D_{\be}^{\epsilon_n|\xi|}\bw\cdot \be=\int_0^1\mathcal{E}\bw(\bx+s\epsilon_n|\xi|\be)\be\cdot\be\,ds,
\end{equation}
with $\bx+s\epsilon_n|\xi|\be\in D$ for $0<s<1$. Next introduce $\chi_D(\bx+s\epsilon_n|\xi|\be)$ taking the value $1$, if $\bx+s\epsilon_n|\xi|\be\in D$ and $0$ otherwise.  Substitution of \eqref{diffquo2} into \eqref{est3}  and application of the Jensen inequality and Fubini's theorem gives
\begin{equation}\label{fourth est}
\begin{aligned}
|I_1^{\epsilon_n}|&\leq\frac{2h'(0)C}{\rho\omega_2}\left(\int_0^1\int_{\mathcal{H}_1(0)}\int_{D}\chi_D(\bx+s\epsilon_n|\xi|\be)|\mathcal{E}\bw(\bx+s\epsilon_n|\xi|\be)\be\cdot\be|^2\,d\bx\,d\xi\,ds\right)^{1/2},
\end{aligned}
\end{equation}
and we conclude
\begin{equation}\label{est4}
\begin{aligned}
|I_1^{\epsilon_n}|&\leq C\Vert \bw\Vert_{H^1({D};\mathbb{R}^2)}.
\end{aligned}
\end{equation}
Elementary calculation gives the estimate  (see equation (6.53) of \cite{CMPer-Lipton})
\begin{equation}\label{max}
\sup_{0\leq x<\infty}|h'({\epsilon_n}|\xi|{x}^2)2x|\leq\frac{2h'(\overline{r}^2){\overline{r}}}{\sqrt{\epsilon_n|\xi|}},
\end{equation}
and we also have (see equation (6.78) of \cite{CMPer-Lipton})
\begin{equation}
\int_{D\times\mathcal{H}_1(0)\cap A^{+}_{\epsilon_n}} \omega(\bx,\xi)J(|\xi|)\,d\xi\,d\bx<C\epsilon_n,
\label{Fprimesecond}
\end{equation}
so  Cauchy's inequality and the inequalities \eqref{diffquo2}, \eqref{max}, \eqref{Fprimesecond}  give
\begin{equation}\label{est5}
\begin{aligned}
|I_2^{\epsilon_n}|&\leq\frac{1}{\rho\omega_2}\int_{D\times\mathcal{H}_1(0)\cap A_{\epsilon_n}^+}\omega(\bx,,\epsilon_n\xi)|\xi|J(|\xi|)\frac{2h'(\overline{r}^2){\overline{r}}}{\sqrt{\epsilon_n|\xi|}}|D_{\be}^{\epsilon_n|\xi|}\bw\cdot \be|\,d\xi\,d\bx,\\
&\leq\frac{1}{\rho\omega_2}\left(\int_{D\times\mathcal{H}_1(0)\cap A_{\epsilon_n}^+}\omega(\bx,,\epsilon_n\xi)|\xi|J(|\xi|)\frac{(2h'(\overline{r}^2){\overline{r}})^2}{{\epsilon_n|\xi|}}\,d\xi\,d\bx\right)^{1/2}\times\\
&\left(\int_{D\times\mathcal{H}_1(0)\cap A_{\epsilon_n}^+}\omega(\bx,\epsilon_n\xi)|\xi|J(|\xi|)|D_{\be}^{\epsilon_n|\xi|}\bw\cdot \be|^2\,d\xi\,d\bx\,dt\right)^{1/2}\\
&\leq C\Vert \bw\Vert_{H^1({D};\mathbb{R}^2)},
\end{aligned}
\end{equation}
and we conclude that the first term on the right hand side of \eqref{energy based weakform est1} admits the estimate
\begin{equation}\label{fin}
\begin{aligned}
|I^{\epsilon_n}| &\leq |I_1^{\epsilon_n}|+|I_2^{\epsilon_n}| \leq  C\Vert \bw\Vert_{H^1({D};\mathbb{R}^2)},
\end{aligned}
\end{equation}
for all $\bw\in H^1(D;\mathbb{R}^2)$.

It follows immediately from lemma \ref{convergingrhs} that the second term on the right hand side of  \eqref{energy based weakform est1} satisfies the estimate
\begin{equation}\label{blf2}
\begin{aligned}
\frac{1}{\rho}\left|\int_D\bb^{\epsilon_n}(\bx,t)\cdot\bw(\bx)\,d\bx\right |\leq C\Vert\bw\Vert_{H^1(D;\mathbb{R}^2)},\hbox{ for all $\bw\in H^1(D;\mathbb{R}^2)$}
\end{aligned}
\end{equation}
From \eqref{fin} and \eqref{blf2} we conclude that there exists a $C>0$ so that
\begin{eqnarray}\label{energy based weakform est3}
\begin{aligned}
\left |\int_D\ddot{\bu}^{\epsilon_n}(\bx,t)\cdot {\bw}(\bx)\,d\bx\right |\leq C\Vert\bw\Vert_{H^1(D;\mathbb{R}^2)},\hbox{ for all $\bw\in \dot H^1(D;\mathbb{R}^2)$}
\end{aligned}
\end{eqnarray}
so
\begin{equation}\label{bounded}
\begin{aligned}
&\sup_{\epsilon_n>0}\sup_{t\in [0,T]}\frac{\int_D\ddot{\bu}^{\epsilon_n}(\bx,t)\cdot {\bw}(\bx)d\bx}{\Vert\bw\Vert_{H^1(D;\mathbb{R}^2)}}<C,\hbox{ for all $\bw\in \dot H^1(D;\mathbb{R}^2)$},
\end{aligned}
\end{equation}
or
\begin{equation}\label{linfinitybounded}
\begin{aligned}
\sup_{t\in [0,T]}\Vert\ddot{\bu}^{\epsilon_n}(t)\Vert_{H^1(D;\mathbb{R}^2)'}<C, \hbox{ for all $\epsilon_n$}
\end{aligned}
\end{equation}
and \eqref{blf} follows. The estimate \eqref{blf} implies weak compactness and passing to subsequences if necessary we deduce that
$\ddot\bu^{\epsilon_n}\rightharpoonup \ddot\bu^0 $ weakly in $L^2(0,T;\dot H^1(D;\mathbb{R}^2)')$ and lemma \ref{convergences} is proved.

To establish lemma \ref{momentumlim2}
we take a test function $\varphi(t)\bw(\bx)$ with $\varphi\in C^\infty_c(0,T)$ and $\bw$ in $C^\infty(\overline{D},\mathbb{R}^2)$ orthogonal to rigid body motions. 
Substituting this test function into \eqref{energy based weakform} and integration by parts in time gives 
\begin{equation}\label{energy based weakform est15}
\begin{aligned}
&\int_0^T\varphi(t)\rho\int_D\ddot{\bu}^{\epsilon_n}(\bx,t)\cdot {\bw}(\bx)d\bx\,dt\\
 &=-\int_0^T\varphi(t)\int_D \int_{\mathcal{H}_{\epsilon_n}(\bx)\cap D} |\by-\bx|\partial_S\mathcal{W}^{\epsilon_n}(S(\by,\bx,\bu^{\epsilon_n}(t)))S(\by,\bx\,\bw)\,d\by \,d\bx\,dt\\
&+\int_0^T\varphi(t)\int_D\bb^{\epsilon_n}(\bx,t)\cdot\bw(\bx)\,d\bx\,dt,
\end{aligned}
\end{equation}
The goal is to pass to the $\epsilon_n=0$  limit in this equation to recover \eqref{momentumlimit}.
The limit of the left hand side of \eqref{energy based weakform est15} follows from  Lemma \ref{convergences} 
\begin{equation}\label{momentumlimitfirstterm}
\begin{aligned}
\lim_{\epsilon_n\rightarrow 0}\int_0^T\varphi(t)\rho\int_D\ddot{\bu}^{\epsilon_n}(\bx,t)\cdot {\bw}(\bx)d\bx dt&=\int_0^T\varphi(t)\,\rho\langle \ddot\bu^0(t),\bw\rangle\,dt.
\end{aligned}
\end{equation}
To recover the $\epsilon_n=0$ limit of the first term on the right hand side of  \eqref{energy based weakform est15} we appeal to the bound \eqref{fin} to pass to the limit under the time integral using Lebesgue dominated convergence. 
Next apply Lemma 6.5 of \cite{CMPer-Lipton} with straightforward modifications to get
\begin{equation}\label{1st termrightside decomp}
\begin{aligned}
&\lim_{\epsilon_n\rightarrow 0}I^{\epsilon_n}=-\lim_{\epsilon_n\rightarrow 0}\int_D \int_{\mathcal{H}_{\epsilon_n}(\bx)\cap D} |\by-\bx|\partial_S\mathcal{W}^{\epsilon_n}(S(\by,\bx,\bu^{\epsilon_n}(t)))S(\by,\bx,\bw)\,d\by d\bx\\
&=-\lim_{\epsilon_n\rightarrow 0}\frac{2}{\omega_2}\int_{{D}\times{\mathcal{H}_1(0)}}\omega(\bx,\epsilon_n\xi)|\xi|J(|\xi|)h'(0)
(D_{\be}^{\epsilon_n|\xi|}\bu^{\epsilon_n}\cdot \be)^- (D_{\be}^{\epsilon_n|\xi|}\bw\cdot \be)\,d\xi\,d\bx,
\end{aligned}
\end{equation}
where $S(\by,\bx,\bu^\epsilon(t))^-=(D_{\be}^{\epsilon|\xi|}\bu^\epsilon\cdot\be)^-$.
As indicated in section \ref{s:proofofsymmetry}, $D_{\be}^{{\epsilon_n}|\xi|}\bu^{\epsilon_n}\cdot \be^-\rightharpoonup g(\bx,\xi)$ converges weakly in $L^2(D\times\mathcal{H}_1(0),\mathbb{R}^2)$ with respect to the measure $|\xi|J(|\xi|)d\xi d\bx $ and $D_{\be}^{\epsilon_n|\xi|}\bw\cdot \be\rightarrow\mathcal{E}\,\bw\,\be\cdot\be$ uniformly on $D$, so
\begin{equation}
\label{1st termrightside decomp1}
\begin{aligned}
&\lim_{\epsilon_n\rightarrow 0}I^{\epsilon_n}=\frac{2}{\omega_2}\int_{{D}\times{\mathcal{H}_1(0)}}\omega(\bx,\epsilon_n\xi)|\xi|J(|\xi|)h'(0)
g(\bx,\xi)\mathcal{E}\,\bw\,\be\cdot\be \,d\xi\,d\bx.,\\
\end{aligned}
\end{equation}
and from \eqref{weaklim} $g(\bx,\xi)=\mathcal{E}\,\bu^0\,\be\cdot\be$ and we recover
\begin{equation}\label{1st termrightside decomp2}
\begin{aligned}
&\lim_{\epsilon_n\rightarrow 0}I^{\epsilon_n}=-\int_{D}\mathbb{C}\mathcal{E}\bu^0:\mathcal{E}\bw\,\,d\bx,
\end{aligned}
\end{equation}
so
\begin{equation}\label{1st termrightside decomp2final}
\begin{aligned}
&\lim_{\epsilon_n\rightarrow 0}\int_0^T\varphi(t)\,I^{\epsilon_n}dt=-\int_0^T\varphi(t)\int_{D}\mathbb{C}\mathcal{E}\bu^0:\mathcal{E}\bw\,\,d\bx\,dt.
\end{aligned}
\end{equation}
We pass to the limit in the second term on the right hand side of  \eqref{energy based weakform est15} using lemma \ref{convergingrhs} to obtain 
\begin{equation}\label{momentumlimitalmost}
\begin{aligned}
\int_0^T\varphi(t)\rho\langle\ddot\bu^0(t),\bw\rangle\,dt
=-\int_0^T\varphi(t)\left(\int_{D }\mathbb{C}\mathcal{E}\bu^0:\mathcal{E}\bw\,dx+\int_{\partial D}\,\bg\cdot\bw\, d\sigma\right)\,dt.
\end{aligned}
\end{equation}
From the density of $C^\infty(\overline{D},\mathbb{R}^2)$ in $\bw\in\dot H^{1}(D,\mathbb{R}^2)$  we see that \eqref{momentumlimitalmost} holds for all $\bw\in\dot H^{1}(D,\mathbb{R}^2)$.  Since  \eqref{momentumlimitalmost} holds for all $\varphi\in C^\infty_c(0,T)$  we recover \eqref{momentumlimit}.

We now establish theorem \ref{limitspace} to show that  $\ddot\bu_\tau^0(\bx,t)$ is a bounded linear functional on the spaces  \,$W^\pm(D_{\beta}(\tau))$ for a.e. $t\in (\tau,T)$.  We illustrate the proof for $\bw\in W^+(D_\beta(\tau))$ noting that identical steps hold for $\bw\in W^-(D_\beta(\tau))$. Pick $\tau\in (0,T)$, suppose $\tau<t$, multiply \eqref{energy based model2} by a trial $\bw\in W^+(D_\beta(\tau))$ and integrating by parts over $D$ to gives
\begin{equation}\label{energy based weakform est 200}
\begin{aligned}
&\rho\int_D\ddot{\bu}^{\epsilon_n}(\bx,t)\cdot {\bw}(\bx)\,d\bx\\
 &=-
\int_{D} \int_{H_{\epsilon_n}(\bx)\cap D} |\by-\bx|\partial_S\mathcal{W}^{\epsilon_n}(S(\by,\bx,\bu^{\epsilon_n}(t)))S(\by,\bx,\bw)\,d\by\, d\bx
\\
&+\int_{D}\bb^{\epsilon_n}(\bx,t)\cdot\bw(\bx)\,d\bx
\end{aligned}
\end{equation}
Now we show that $\ddot\bu^{\epsilon_n}(t)$ is bounded in $W^+(D_\beta(\tau))'$ uniformly for all $t\in (\tau,T)$ and $0<\epsilon_n<\beta/2$.
As before the first term on the righthand side is denoted by $I^{\epsilon_n}$ and we change variables $\by=\bx+\epsilon\xi$, $|\xi|<1$, with $d\by=\epsilon_n^2d\xi$  and write out $\partial_S\mathcal{W}^\epsilon(S(\by,\bx,\bu^\epsilon(t)))$ to get
\begin{equation}\label{est98}
\begin{aligned}
&I^{\epsilon_n}=-\frac{1}{\omega_2}\int_{{D}\times{\mathcal{H}_1(0)}}\omega(\bx,\epsilon_n\xi)|\xi|J(|\xi|)h'\left(\epsilon_n|\xi||D_{\be}^{\epsilon_n|\xi|}\bu^{\epsilon_n}\cdot \be|^2\right)\\
&\times2(D_{\be}^{\epsilon_n|\xi|}\bu^{\epsilon_n}\cdot \be) (D_{\be}^{\epsilon_n|\xi|}\bw\cdot \be)\,d\xi\,d\bx,
\end{aligned}
\end{equation}
where $\omega(\bx,\xi)$ is unity if $\bx+\epsilon_n\xi$ is in $D$ and zero otherwise. Note that the boundary component  of $\partial D^+_\beta(\tau)$ given by $\{\bx\in D:\,\ell(0)\leq x_1\leq
\ell^0(\tau)-\beta,\,x_2=0\}$ is a subset of the failure zone centerline $C^{\epsilon_n}(t)$ so for $\bx$ and $\by$ in $FZ^{\epsilon_n}(t)$ we see that ${\bolds{f}}^{\epsilon_n}(\by,\bx)=0$ or equivelently
\begin{equation}\label{ident998}
\begin{aligned}
h'\left(\epsilon_n|\xi||D_{\be}^{\epsilon_n|\xi|}\bu^{\epsilon_n}\cdot \be|^2\right)\times2(D_{\be}^{\epsilon_n|\xi|}\bu^{\epsilon_n}\cdot \be)=0
\end{aligned}
\end{equation}
for $\bx$ and $\by$ in $FZ^{\epsilon_n}(t)$.
Then for for $n$ large enough so that $\ell^0(\tau)-\beta<\ell^{\epsilon_n}(t)$ and $0<\epsilon_n<\beta/2$ and for test functions $\bw\in W^+(D_\beta(\tau))$ the product
\begin{equation}\label{ident98}
\begin{aligned}
&h'\left(\epsilon_n|\xi||D_{\be}^{\epsilon_n|\xi|}\bu^{\epsilon_n}\cdot \be|^2\right)\times2(D_{\be}^{\epsilon_n|\xi|}\bu^{\epsilon_n}\cdot \be) (D_{\be}^{\epsilon_n|\xi|}\bw\cdot \be)\\
&=h'\left(\epsilon_n|\xi||D_{\be}^{\epsilon_n|\xi|}\bu^{\epsilon_n}\cdot \be^-|^2\right)\times2(D_{\be}^{\epsilon_n|\xi|}\bu^{\epsilon_n}\cdot \be^-) (D_{\be}^{\epsilon_n|\xi|}\bw\cdot \be)\\
&=\chi(\bx,\bx+\epsilon_n\xi)h'\left(\epsilon_n|\xi||D_{\be}^{\epsilon_n|\xi|}\bu^{\epsilon_n}\cdot \be^-|^2\right)\times2(D_{\be}^{\epsilon_n|\xi|}\bu^{\epsilon_n}\cdot \be^-) (D_{\be}^{\epsilon_n|\xi|}\bw\cdot \be),
\end{aligned}
\end{equation}
where
\begin{align}\label{infervalfunction}
\chi(\bx,\bx+\epsilon_n\xi)= \begin{cases}
0, & \text{ if the points } \bx,\,\bx+\epsilon_n\xi \text{ are separated by } \{0\leq x_1\leq \ell^0(\tau)-\beta,\, x_2=0\}\\
1,& \text{otherwise} .
\end{cases}
\end{align}
(Here we say that $\bx,\,\bx+\epsilon_n\xi$ are separated by $\{0\leq x_1\leq \ell^0(\tau)-\beta,\, x_2=0\}$ when it is impossible to connect these two points by a line segment without crossing $\{0\leq x_1\leq \ell^0(\tau)-\beta,\, x_2=0\}$.)
Then $I^{\epsilon_n}$ becomes
\begin{equation}\label{est99}
\begin{aligned}
&I^{\epsilon_n}=-\frac{1}{\omega_2}\int_{{D}\times{\mathcal{H}_1(0)}}\omega(\bx,\epsilon_n\xi)\chi(\bx,\bx+\epsilon_n\xi)|\xi|J(|\xi|)h'\left(\epsilon_n|\xi||D_{\be}^{\epsilon_n|\xi|}\bu^{\epsilon_n}\cdot \be^-|^2\right)\\
&\times2(D_{\be}^{\epsilon_n|\xi|}\bu^{\epsilon_n}\cdot \be^-) (D_{\be}^{\epsilon_n|\xi|}\bw\cdot \be)\,d\xi\,d\bx.
\end{aligned}
\end{equation}
We can now bound \eqref{est99} as in \eqref{est3} and change the order of integration to arrive at the upper bound
\begin{equation}\label{fourth estt}
\begin{aligned}
|I^{\epsilon_n}|&\leq\frac{2h'(0)C}{\omega_2}\left(\int_{\mathcal{H}_1(0)}\int_{D}\omega(\bx,\epsilon_n\xi)\chi(\bx,\bx+\epsilon_n\xi)| D_{\be}^{\epsilon_n|\xi|}\bw\cdot \be  |^2\,d\bx\,d\xi\,\right)^{1/2}.
\end{aligned}
\end{equation}
We change to slicing variables and write $\bx=\by+r \be$, where $\be$ is on the unit circle and $\by\in\Pi_{\be}$ where $\Pi_{\be}$ is the subspace perpendicular to $\be$ and $r\in\mathbb{R}$. 
We set $D^{\be}_{\by}=\{r\in\mathbb{R}:\,\by+r\be\in D_\beta(\tau)\}$  and $D^{\be}=\{\by\in\Pi_{\be}:\, D^{\be}_{\by}\not=\emptyset\}$ so
\begin{equation}\label{fourth esttt}
\begin{aligned}
|I^{\epsilon_n}|&\leq\frac{2h'(0)C}{\omega_2}\left(\int_{\mathcal{H}_1(0)}\int_{D^{\be}}\int_{D^{\be}_{\by}}\chi(\by+r\be,\by+(r+\epsilon_n|\xi|)\be)| D_{\be}^{\epsilon_n|\xi|}\bw\cdot \be  |^2\,dr\,d\by\,d\xi\,\right)^{1/2}.
\end{aligned}
\end{equation}
We use the fact that functions in Sobolev spaces are absolutely continuous for a.e. lines to write \eqref{diffquo2} for $\bw\in W^+(D_\beta(\tau))$ and
\begin{equation}\label{fourth estttt}
\begin{aligned}
&|I^{\epsilon_n}|\leq\\
&\frac{2h'(0)C}{\omega_2}\left(\int_{\mathcal{H}_1(0)}\int_{D^{\be}}\int_{D^{\be}_{\by}}\chi(\by+r\be,\by+(r+\epsilon_n|\xi|)\be)| \int_0^1\mathcal{E}(\by+(r+s\epsilon_n|\xi|\be))\be\cdot \be \,ds |^2\,dr\,d\by\,d\xi\,\right)^{1/2}\\
&\leq\\
&\frac{2h'(0)C}{\omega_2}\left(\int_{\mathcal{H}_1(0)} \int_0^1\int_{D^{\be}}\int_{D^{\be}_{\by}}\chi(\by+r\be,\by+(r+\epsilon_n|\xi|)\be)|\mathcal{E}(\by+(r+s\epsilon_n|\xi|\be))\be\cdot \be |^2\,dr\,d\by\,ds\,d\xi\,\right)^{1/2}.
\end{aligned}
\end{equation}
where Jensen inequality and Fubini's theorem have been applied in the last line. Introducing  $\chi_{D_\beta(\tau)}(\bx)=1$ if its argument lies in $D_\beta(\tau)$ and zero otherwise, applying $\chi(\by+r\be,\by+(r+\epsilon_n|\xi|)\be)\leq\chi_{D_\beta(\tau)}(\by+r\be)\chi_{D_\beta(\tau)}(\by+(r+s\epsilon_n|\xi|)\be)$ and changing to original variables gives
\begin{equation}\label{fourth est}
\begin{aligned}
|I^{\epsilon_n}|&\leq\frac{2h'(0)C}{\omega_2}\left(\int_0^1\int_{\mathcal{H}_1(0)}\int_{D}\chi_{D_\beta(\tau)}(\bx)\chi_{D_\beta(\tau)}(\bx+s\epsilon_n|\xi|\be)|\mathcal{E}\bw(\bx+s\epsilon_n|\xi|\be)\be\cdot\be|^2\,d\bx\,d\xi\,ds\right)^{1/2}.
\end{aligned}
\end{equation}
From this we conclude
\begin{equation}\label{est40}
\begin{aligned}
|I^{\epsilon_n}|&\leq C\Vert \bw\Vert_{H^1({D_\beta(\tau)};\mathbb{R}^2)}.
\end{aligned}
\end{equation}
Arguments identical to the proof of lemma \ref{convergingrhs} show that the sequence $\bb^{\epsilon_n}$ is uniformly bounded in $W^+(D_\beta(\tau))'$ for all $\tau\in [0,T]$ and $\epsilon_n>0$ and together with \eqref{est40} one concludes
\begin{equation}\label{linfinitybounded20}
\begin{aligned}
\sup_{t\in (\tau,T)}\Vert\ddot{\bu}^{\epsilon_n}(t)\Vert_{W^+(D_\beta(\tau);\mathbb{R}^2)'}<C, \hbox{ for $\beta/2>\epsilon_n>0$}.
\end{aligned}
\end{equation}
Hence
\begin{equation}\label{linfinitybounded200}
\begin{aligned}
\int_\tau^T\Vert\ddot{\bu}^{\epsilon_n}(t)\Vert_{W^+(D_\beta(\tau);\mathbb{R}^2)'}^2\,dt<\infty \hbox{ for $\beta/2>\epsilon_n>0$},
\end{aligned}
\end{equation}
and passing to a subsequence if necessary  gives a $\bv(t)$ in $L^2(\tau,T;\,W^+(D_\beta(\tau))')$ such that $\ddot\bu^{\epsilon_n}\rightharpoonup\bv$ weakly in $L^2(\tau,T;\,W^+(D_\beta(\tau)')$.

We finish the proof by showing $\bv=\ddot\bu^0_\tau$. To see this note $\bu^{\epsilon_n}\in C^2([0,T];\,L^2(D,\mathbb{R}^2))$ and for $\varphi\in C_c^\infty(\tau,T)$ and for $\bw\in W^+(D_\beta(\tau))$ we have
\begin{equation}\label{weakidentity}
\int_\tau^T\int_{D}\ddot\bu^{\epsilon_n}\cdot\bw\,d\bx\,\varphi(t)\,dt=-\int_\tau^T\int_{D}\dot\bu^{\epsilon_n}\cdot\bw\,d\bx\,\dot\varphi(t)\,dt
\end{equation}
Passing to the $\epsilon_n=0$ limit using lemma \ref{convergences} applied to the right hand side gives
\begin{equation}\label{weakidentityy}
\int_\tau^T\langle\bv,\bw\rangle\,\varphi(t)dt=-\int_\tau^T\int_{D}\dot\bu^0\cdot\bw\,\dot\varphi(t) d\bx\,dt, \hbox{ for all $\bw\in W^+(D_\beta(\tau))$}
\end{equation}
and we deduce from \eqref{weakidentityy} that $\bv=\ddot\bu^0_\tau$ as elements of $W^+(D_\beta(\tau))'$.
Identical arguments show that $\ddot\bu^0_\tau\in W^-(D_\beta(\tau))'$ and Theorem \ref{limitspace} is proved.

We now prove lemma \ref{traction2}.
We illustrate the proof for $\bw(\bx)\in W^+(D_\beta(\tau))$ noting an identical proof holds for $\bw \in W^-(D_\beta(\tau))$.
Pick $\varphi(t)\in C^\infty_c(\tau,T)$ and  $\bw(\bx)\in W^+(D_\beta(\tau))$ and substitute into  \eqref{energy based weakform} and an integration by parts in time gives
\begin{equation}\label{energy based weakform est 200}
\begin{aligned}
&\int_\tau^T\rho\int_D\ddot{\bu}^{\epsilon_n}(\bx,t)\cdot {\bw}(\bx)\,d\bx\,\varphi(t)\,dt\\
 &=-
\int_\tau^T\int_{D} \int_{H_{\epsilon_n}(\bx)\cap D} |\by-\bx|\partial_S\mathcal{W}^{\epsilon_n}(S(\by,\bx,\bu^{\epsilon_n}(t)))S(\by,\bx,\bw)\,d\by\, d\bx\,\varphi(t)\,dt
\\
&+\int_\tau^T\int_{D}\bb^{\epsilon_n}(\bx,t)\cdot\bw(\bx)\,d\bx\,\varphi(t)\,dt
\end{aligned}
\end{equation}
The goal is to pass to the $\epsilon_n=0$  limit in this equation to recover \eqref{momentumlimit2}. The limit of the left hand side of \eqref{energy based weakform est 200} follows from  Theorem \ref{limitspace}
\begin{equation}\label{momentumlimitfirstterm}
\begin{aligned}
\lim_{\epsilon_n\rightarrow 0}\int_\tau^T\varphi(t)\rho\int_D\ddot{\bu}^{\epsilon_n}(\bx,t)\cdot {\bw}(\bx)d\bx dt&=\int_\tau^T\varphi(t)\,\rho\langle \ddot\bu^0_\tau(t),\bw\rangle\,dt.
\end{aligned}
\end{equation}

The first term on the right hand side of \eqref{energy based weakform est 200} is written
\begin{equation}\label{1sttermrhs}
\int_\tau^T\varphi I^{\epsilon_n}\,dt.
\end{equation}
We can recover the $\epsilon_n=0$ limit of the first term on the right hand side of  \eqref{energy based weakform est 200} by appealing to the bound \eqref{est40} to pass to the limit under the time integral using Lebesgue dominated convergence once we show that for every $\bw\in W^+(D_\beta(\tau))$ the bounded sequence $\{I^{\epsilon_n}(t)\}$ has a limit for a.e. $t\in(\tau,T)$. To see this we apply \eqref{ident98} to get that
\begin{equation}\label{weak strong}
\begin{aligned}
&I^{\epsilon_n}(t)=-\int_D \int_{\mathcal{H}_{\epsilon_n}(\bx)\cap D} |\by-\bx|\partial_S\mathcal{W}^{\epsilon_n}(S(\by,\bx,\bu^{\epsilon_n}(t)))S(\by,\bx,\bw)\,d\by d\bx\\
&=-\frac{1}{\omega_2}\int_{{D}\times{\mathcal{H}_1(0)}}\omega(\bx,\epsilon_n\xi)\chi(\bx,\bx+\epsilon_n\xi)|\xi|J(|\xi|)h'\left(\epsilon_n|\xi||D_{\be}^{\epsilon_n|\xi|}\bu^{\epsilon_n}\cdot \be^-|^2\right)\\
&\times2(D_{\be}^{\epsilon_n|\xi|}\bu^{\epsilon_n}\cdot \be^-) (D_{\be}^{\epsilon_n|\xi|}\bw\cdot \be)\,d\xi\,d\bx.
\end{aligned}
\end{equation}
The integrand is the product of two factors (note $\omega(\bx,\epsilon_n\xi)\chi(\bx,\bx+\epsilon_n\xi)=\omega(\bx,\epsilon_n\xi)^2\chi(\bx,\bx+\epsilon_n\xi)^2$) and we show that on passing to a subsequence if necessary the first factor
\begin{equation}\label{weak}
\omega(\bx,\epsilon_n\xi)\chi(\bx,\bx+\epsilon_n\xi)h'\left(\epsilon_n|\xi||D_{\be}^{\epsilon_n|\xi|}\bu^{\epsilon_n}\cdot \be^-|^2\right)
\times2(D_{\be}^{\epsilon_n|\xi|}\bu^{\epsilon_n}\cdot \be^-)\rightharpoonup 2h'(0)g(\bx,\xi,t))
\end{equation}
weakly in $L^2(D\times\mathcal{H}_1(0),\mathbb{R})$ and the second factor
\begin{equation}\label{strong}
\omega(\bx,\epsilon_n\xi)\chi(\bx,\bx+\epsilon_n\xi)D_{\be}^{\epsilon_n|\xi|}\bw\cdot \be\rightarrow\mathcal{E}\bw(\bx)\be\cdot\be.
\end{equation}
strong in $L^2(D\times\mathcal{H}_1(0),\mathbb{R})$.
Here as in section \ref{s:proofofsymmetry} the $L^2$ norm and inner product are with respect to the weighted measure $|\xi|J(|\xi|)d\xi d\bx $.
Hence for fixed $t$ we conclude that for any cluster point of $\{I^{\epsilon_n}(t)\}$  there is a subsequence
\begin{equation}\label{limit}
\begin{aligned}
&\lim_{\epsilon_{n'}\rightarrow 0}I^{\epsilon_{n'}}(t)=-\int_D\int_{\mathcal{H}_1(0)}\,|\xi|J(|\xi|)2h'(0)g(\bx,\xi,t))\mathcal{E}\bw(\bx)\be\cdot\be\,d\xi\, d\bx\\
&=-\int_D\int_{\mathcal{H}_1(0)}\,2|\xi|J(|\xi|)h'(0)(\mathcal{E}\bu^0(t,\bx)\be\cdot\be)(\mathcal{E}\bw(\bx)\be\cdot\be)\,d\xi\, d\bx \\
&=-\int_D\mathbb{C}\mathcal{E}\bu^0(t,\bx):\mathcal{E}\bw(\bx)\,d\bx,
\end{aligned}
\end{equation}
where the second line  follows from \eqref{weaklim} and the third line follows from a calculation.
One obtains the same limit for subsequences of all possibly distinct cluster points of $\{I^{\epsilon_n}(t)\}$ to conclude there is one cluster point and we have identified $\lim_{\epsilon_n\rightarrow 0}I^{\epsilon_n}(t)$ for a.e. $t\in(0,T)$.

To conclude the weak and strong convergences \eqref{weak} and \eqref{strong} are established. First note that $h'(r)$ is monotone decreasing in $r$ so  $h'\left(\epsilon_n|\xi||D_{\be}^{\epsilon_n|\xi|}\bu^{\epsilon_n}\cdot \be^-|^2\right)\leq h'(0)$ and from \eqref{L2bound} $D_{\be}^{\epsilon_n|\xi|}\bu^{\epsilon_n}\cdot\be^-$ is bounded in $L^2(D\times\mathcal{H}_1(0),\mathbb{R})$ so the first factor is bounded in  $L^2(D\times\mathcal{H}_1(0),\mathbb{R})$ uniformly in $\epsilon_n$ and has a subsequence that converges weakly to a limit written $K(\bx,\xi,t)$. Application of Lemma  6.5 of \cite{CMPer-Lipton} and \eqref{weakdiffquo} allows us to identify  $K(\bx,\xi,t)=2h'(0)g(\bx,\xi,t))$ where we have explicitly written the time dependence of $g(\bx,\xi)$ and weak convergence is established.  Next we form
\begin{equation}\label{strongconvg}
\begin{aligned}
&A^{\epsilon_n}=\frac{1}{\omega_2}\int_{{D}\times{\mathcal{H}_1(0)}}\omega(\bx,\epsilon_n\xi)\chi(\bx,\bx+\epsilon_n\xi)|\xi|J(|\xi|)\left |(D_{\be}^{\epsilon_n|\xi|}\bw\cdot \be)-\mathcal{E}\bw(\bx)\be\cdot\be\right |^2\,d\xi\,d\bx.
\end{aligned}
\end{equation}
Procceding as before we get
\begin{equation}\label{strongconvgence}
\begin{aligned}
&\lim_{\epsilon_n\rightarrow 0}A^{\epsilon_n}\leq\\
&\lim_{\epsilon_n\rightarrow0}\int_0^1\frac{1}{\omega_2}\int_{{D}\times{\mathcal{H}_1(0)}}\chi_{D_\beta(\tau)}(\bx)\chi_{D_\beta(\tau)}(\bx+s\epsilon_n\xi)|\xi|J(|\xi|)\left|\mathcal{E}\bw(\bx+s\epsilon_n|\xi|\be)-\mathcal{E}\bw(\bx)\be\cdot\be\right|^2\,d\xi\,d\bx\,ds\\
&=\int_0^1\int_{D_\beta(\tau)}s^2\lim_{\epsilon_n\rightarrow0}\frac{1}{s^2\omega_2}\int_{\mathcal{H}_1(0)}\chi_{D_\beta(\tau)}(\bx+s\epsilon_n\xi)|\xi|J(|\xi|)\left|\mathcal{E}\bw(\bx+s\epsilon_n|\xi|\be)-\mathcal{E}\bw(\bx)\be\cdot\be\right|^2\,d\xi\,d\bx\,ds\\
&=0,
\end{aligned}
\end{equation}
where we use Lebesgue bounded convergence to interchange limit and integral and the point wise limit holds a.e. $\bx\in D_\beta(\tau)$ at the Lebesgue points
\begin{equation}\label{pointwiseconvgence}
\begin{aligned}
\lim_{\epsilon_n\rightarrow0}\frac{1}{s^2\omega_2}\int_{\mathcal{H}_1(0)}|\xi|J(|\xi|)\left|\mathcal{E}\bw(\bx+s\epsilon_n|\xi|\be)-\mathcal{E}\bw(\bx)\be\cdot\be\right|^2\,d\xi=0,
\end{aligned}
\end{equation}
This establishes strong convergence for $\bw\in W^+(D_\beta(\tau))$.
Collecting results gives that the limit of the first term on the right hand side of \eqref{energy based weakform est 200} is
\begin{equation}\label{1st termrightside decomp3final}
\begin{aligned}
&\lim_{\epsilon_n\rightarrow 0}\int_0^T\varphi(t)\,I^{\epsilon_n}dt=-\int_0^T\varphi(t)\int_{D}\mathbb{C}\mathcal{E}\bu^0:\mathcal{E}\bw\,\,d\bx\,dt.
\end{aligned}
\end{equation}
Passing to the limit on the last term of the right hand side of  \eqref{energy based weakform est 200} and arguments similar to before give
\begin{equation}\label{1st termrightside decomp3finalfinal}
\begin{aligned}
&\lim_{\epsilon_n\rightarrow 0}\int_\tau^T\int_D\bb^{\epsilon_n}\cdot\bw\,d\bx\,\varphi(t)\,dt=\int_\tau^T\int_{\partial D}\bg\cdot\bw\,d\sigma\,\varphi(t)\,dt.
\end{aligned}
\end{equation}
and we conclude that
\begin{equation}\label{momentumlimitalmost2}
\begin{aligned}
\int_\tau^T\varphi(t)\rho\langle\ddot\bu^0(t),\bw\rangle\,dt
=-\int_\tau^T\varphi(t)\left(\int_{D }\mathbb{C}\mathcal{E}\bu^0:\mathcal{E}\bw\,dx+\int_{\partial D}\,\bg\cdot\bw\, d\sigma\right)\,dt,
\end{aligned}
\end{equation}
for all $\bw\in W^+(D_\beta(\tau))$ and lemma \ref{traction2} is proved.

\section{Weak solution of the wave equation on $D_t$}
\label{s:weaksolnproof}
Theorem \ref{zerohorizonweaksoln} is proved in this section.  From theorem \ref{symmetry2} and  lemma \ref{convergences}  the limit displacement $\bu^0$ belongs to $\mathcal{V}$. From lemma 2.8 and remark 2.9 of \cite{DalToader} we have that if $\bu\in\mathcal{V}$ and  \eqref{weaksoln} holds for every $\varphi \in C_c^\infty((0,T);V_T)$ with $\varphi(t)\in V_t$ then $\bu$ is a weak solution of \eqref{formulation}.  Motivated by this we begin by selecting a class of trial fields that are convenient to work with.  For $t \in [0,T]$ take $s_\beta(t)=t-\beta$ and given $\bw\in  C_c^\infty((0,T);V_T) $ with $\bw\in V_t$ for $t\in(0,T)$. Set $\tilde{\bw}(t)=\bw(s_\beta(t))\in V_{s_\beta(t)}\subset V_t$  for some $\beta\in (0,t)$.  Substitution of this trial in  \eqref{energy based weakform} gives the identity 
\begin{eqnarray}\label{energy based weakform2}
\begin{aligned}
&\rho \int_0^T\int_D\dot{\bu}^{\epsilon_n}(t)\cdot {\dot{\tilde\bw}}(t)d\bx\,dt\\
 &=
\int_0^T\int_D \int_{\mathcal{H}_{\epsilon_n}(\bx)\cap D} |\by-\bx|\partial_S\mathcal{W}^{\epsilon_n}(S(\by,\bx,\bu^{\epsilon_n}(t)))S(\by,\bx,\tilde\bw(t))\,d\by d\bx\,dt\\
&-\int_0^T\int_D\bb^{\epsilon_n}(t)\cdot\tilde\bw(t)\,d\bx\,dt, \hbox{   for $\epsilon_n>0$}.
\end{aligned}
\end{eqnarray}
Here we will pass to the $\epsilon_n=0$ limit in this identity to obtain an $\epsilon_n=0$ identity. Then on passing to the $\beta\rightarrow 0$  limit in each term we  will show that $\bu^0$ is a weak solution. We begin by understanding the limit of the middle term in \eqref{energy based weakform2}  for a given sequence indexed by $\epsilon_n$.  We write out the integrand appearing under the time integral 
\begin{equation}\label{limitofmiddleterm}
I^{\epsilon_n}(t,\tilde\bw(t))=\int_D \int_{\mathcal{H}_{\epsilon_n}(\bx)\cap D} |\by-\bx|\partial_S\mathcal{W}^{\epsilon_n}(S(\by,\bx,\bu^{\epsilon_n}(t)))S(\by,\bx,\tilde\bw(t))\,d\by\, d\bx.
\end{equation}
and identify the point-wise limit $\lim_{\epsilon_n\rightarrow 0} I^{\epsilon_n}(t,\tilde\bw(t))$ for a.e. $t\in(0,T)$. 
For this choice of test function we change variables as in \eqref{est1} to obtain
\begin{equation}\label{i[plus2}
I^{\epsilon_n}(t,\tilde\bw) =I_1^{\epsilon_n}(t,\tilde\bw)+I_2^{\epsilon_n}(t,\tilde\bw),
\end{equation}
where
\begin{equation}\label{est222222}
\begin{aligned}
I_1^{\epsilon_n}(t,\tilde\bw)=-\frac{1}{\omega_2}\int_{D\times\mathcal{H}_1(0)}\omega(\bx,,\epsilon_n\xi)|\xi|J(|\xi|)h'\left(\epsilon_n|\xi||D_{\be}^{\epsilon_n|\xi|}\bu^{\epsilon_n}\cdot \be^-|^2\right)\\
\times 2(D_{\be}^{\epsilon_n|\xi|}\bu^{\epsilon_n}\cdot \be^-) (D_{\be}^{\epsilon_n|\xi|}\tilde\bw\cdot \be)\,d\xi\,d\bx,\\
I_2^{\epsilon_n}(t,\tilde\bw)=-\frac{1}{\omega_2}\int_{D\times\mathcal{H}_1(0)\cap \{SZ^{\epsilon_n}(t)\setminus FZ^{\epsilon_n}(t)\}\cap A_{\epsilon_n}^+}\omega(\bx,,\epsilon_n\xi)|\xi|J(|\xi|)h'\left(\epsilon_n|\xi||D_{\be}^{\epsilon_n|\xi|}\bu^{\epsilon_n}\cdot \be|^2\right)\\
\times2(D_{\be}^{\epsilon_n|\xi|}\bu^{\epsilon_n}\cdot \be) (D_{\be}^{\epsilon_n|\xi|}\tilde\bw\cdot \be)\,d\xi\,d\bx,
\end{aligned}
\end{equation}
As in \eqref{est99} we have
\begin{equation}\label{est9999}
\begin{aligned}
&I_1^{\epsilon_n}(t,\tilde\bw)=-\frac{1}{\omega_2}\int_{{D}\times{\mathcal{H}_1(0)}}\omega(\bx,\epsilon_n\xi)\tilde\chi(\bx,\bx+\epsilon_n\xi)|\xi|J(|\xi|)h'\left(\epsilon_n|\xi||D_{\be}^{\epsilon_n|\xi|}\bu^{\epsilon_n}\cdot \be^-|^2\right)\\
&\times2(D_{\be}^{\epsilon_n|\xi|}\bu^{\epsilon_n}\cdot \be^-) (D_{\be}^{\epsilon_n|\xi|}\tilde\bw\cdot \be)\,d\xi\,d\bx,
\end{aligned}
\end{equation}
where
\begin{align}\label{infervalfunction2}
\tilde\chi(\bx,\bx+\epsilon_n\xi)= \begin{cases}
0, & \text{ if the points } \bx,\,\bx+\epsilon_n\xi \text{ are separated by } \{0\leq x_1\leq \ell^0(t-\beta),\, x_2=0\}\\
1,& \text{otherwise},
\end{cases}
\end{align}
for $n$ large enough so that $\ell^0(\beta-t)<\ell^{\epsilon_n}(t)$ and $0<\epsilon_n<(\ell^0(t)-\ell^0(t-\beta))/2$, where $\beta\in(0,t)$. (Here we have used that $\ell^0(t)$ is continuous and strictly increasing.)
As before the integrand is the product of two factors such that the first factor
\begin{equation}\label{weak}
\omega(\bx,\epsilon_n\xi)\tilde\chi(\bx,\bx+\epsilon_n\xi)h'\left(\epsilon_n|\xi||D_{\be}^{\epsilon_n|\xi|}\bu^{\epsilon_n}\cdot \be^-|^2\right)
\times2(D_{\be}^{\epsilon_n|\xi|}\bu^{\epsilon_n}\cdot \be^-)\rightharpoonup 2h'(0)g(\bx,\xi,t))
\end{equation}
weakly in $L^2(D\times\mathcal{H}_1(0),\mathbb{R})$ and the second factor
\begin{equation}\label{strong}
\omega(\bx,\epsilon_n\xi)\tilde\chi(\bx,\bx+\epsilon_n\xi)D_{\be}^{\epsilon_n|\xi|}\tilde\bw\cdot \be\rightarrow\mathcal{E}\tilde\bw(\bx)\be\cdot\be.
\end{equation}
strong in $L^2(D\times\mathcal{H}_1(0),\mathbb{R})$. Hence we conclude using the same arguments given in the proof of lemma \ref{traction2} that
\begin{equation}\label{I1limit}
\lim_{\epsilon_{n}\rightarrow 0}I_1^{\epsilon_{n}}(t,\tilde\bw)=-\int_D\mathbb{C}\mathcal{E}\bu^0(t):\mathcal{E}\tilde\bw\,d\bx.
\end{equation}
For  $\tilde\bw\in V_{s_\beta(t)}$ and hypothesis \ref{hyp1} and since $\ell^0(t)$ is strictly increasing and continuous it is evident that for  $\epsilon_n$ sufficiently small,  $\tilde\bw$ is continuous on almost all lines that intersect $\{SZ^{\epsilon_n}\setminus FZ^{\epsilon_n}\}$. From hypothesis \ref{hyp1}, noting that $\ell^0(t)$  is strictly increasing and continuous, we find after a simple calculation that $|\{SZ^{\epsilon_n}\setminus FZ^{\epsilon_n}\}|\leq C|\epsilon_n|^2$. We estimate $I_2^{\epsilon_n}(t,\tilde\bw)$ as in \eqref{est5}
\begin{equation}\label{est50000}
\begin{aligned}
|I_2^{\epsilon_n}(t,\tilde\bw)|&\leq\frac{1}{\rho\omega_2}\int_{D\times\mathcal{H}_1(0)\cap \{SZ^{\epsilon_n}(t)\setminus FZ^{\epsilon_n}(t)\}\cap A_{\epsilon_n}^+}\omega(\bx,,\epsilon_n\xi)|\xi|J(|\xi|)\frac{2h'(\overline{r}^2){\overline{r}}}{\sqrt{\epsilon_n|\xi|}}|D_{\be}^{\epsilon_n|\xi|}\tilde\bw\cdot \be|\,d\xi\,d\bx,\\
&\leq\frac{1}{\rho\omega_2}\left(\int_{D\times\mathcal{H}_1(0)\cap\{SZ^{\epsilon_n}(t)\setminus FZ^{\epsilon_n}(t)\}}\omega(\bx,,\epsilon_n\xi)|\xi|J(|\xi|)\frac{(2h'(\overline{r}^2){\overline{r}})^2}{{\epsilon_n|\xi|}}\,d\xi\,d\bx\right)^{1/2}\times\\
&\left(\int_{D\times\mathcal{H}_1(0)\cap \{SZ^{\epsilon_n}(t)\setminus FZ^{\epsilon_n}(t)\}}\omega(\bx,\epsilon_n\xi)|\xi|J(|\xi|)|D_{\be}^{\epsilon_n|\xi|}\tilde\bw\cdot \be|^2\,d\xi\,d\bx\,dt\right)^{1/2}\\
&\leq C\sqrt{|\epsilon_n|}\Vert \tilde\bw\Vert_{H^1({D_\beta(t)};\mathbb{R}^2)}.
\end{aligned}
\end{equation}
From this we conclude that $\lim_{\epsilon_n\rightarrow 0} I^{\epsilon_n}(t,\tilde\bw)$ exists and 
\begin{equation}\label{Ilimit}
\lim_{\epsilon_{n}\rightarrow 0}I^{\epsilon_{n}}(t,\tilde\bw)=-\int_D\mathbb{C}\mathcal{E}\bu^0(t):\mathcal{E}\tilde\bw\,d\bx,
\end{equation}
for $\tilde\bw \in V_{s_\beta(t)}$ for a.e. $t\in (0,T)$.  
Arguments identical to the proof of theorem  \ref{limitspace} show that for $\tilde\bw\in V_{s_\beta(t)}$ we have
\begin{equation}\label{est40000}
\begin{aligned}
|I^{\epsilon_n}(t,\tilde\bw)|&\leq C\Vert \tilde\bw\Vert_{V_{s_\beta(t)}}.
\end{aligned}
\end{equation}

We form
\begin{eqnarray}\label{energy based weakform middleterm}
\int_0^TI^{\epsilon_n}(t,{\tilde\bw}(t))\,dt.
\end{eqnarray}
One then sees from definition \ref{defweakspaces} that $\Vert{\tilde\bw}(t)\Vert_{V_{s_\beta(t)}}$ is integrable and from  \eqref{est40000} we can apply the Lebesgue dominated convergence theorem to conclude
\begin{eqnarray}\label{energy based weakform middletermlim}
\lim_{\epsilon_n\rightarrow 0}\int_0^TI^{\epsilon_n}(t,{\tilde\bw}(t))\,dt=-\int_0^T\int_D\mathbb{C}\mathcal{E}\bu^0(t):\mathcal{E}{\tilde\bw}(t)\,d\bx\,dt.
\end{eqnarray}

It is first noted that lemma \ref{convergingrhs} can be extended in a straight forward way to the present context. Applying this to the last term in \eqref{energy based weakform2}  gives
\begin{equation}\label{tractionbdryconddd}
-\lim_{\epsilon_n\rightarrow0}\int_0^T\int_D\bb^{\epsilon_n}(t)\cdot\tilde\bw(t)\,d\bx\,dt=-\int_0^T\int_{\partial D}\bg(t)\cdot\tilde\bw(t)\,d\sigma\,dt.
\end{equation}
We apply lemma \ref{convergences} to the first term of \eqref{energy based weakform2} and pass to a subsequence if necessary to find that
\begin{equation}\label{firstterm}
\lim_{\epsilon_n\rightarrow 0}\rho \int_0^T\int_D\dot{\bu}^{\epsilon_n}(t)\cdot \dot{\tilde\bw}(t)d\bx\,dt=\rho \int_0^T\int_D\dot{\bu}^{0}(t)\cdot \dot{\tilde\bw}(t)d\bx\,dt.
\end{equation}
On again passing to a subsequence if necessary we recover
\begin{equation}
\label{weaksolnbeta}
-\int_0^T\rho\int_D\dot\bu(t)\cdot\dot{\tilde\bw}(t)\,d\bx\,dt+\int_0^T\int_D\mathbb{C}\mathcal{E}\bu(t):\mathcal{E}\tilde\bw(t)\,d\bx\,dt=\int_0^T\int_{\partial D}\bg(t)\cdot\tilde\bw(t)\,d\sigma\,dt,
\end{equation}
where $\tilde\bw(t)=\bw(s_\beta(t))=\bw(t-\beta)\in V_{s_\beta(t)}$ for a.e. $t\in [0,T]$. Given that $\bw(t)\in C_c^\infty(0,T;V_T)$ we see that
\begin{equation}\label{firsttermsmooth}
\lim_{\beta\rightarrow 0}\rho \int_0^T\int_D\dot{\bu}^{0}(t)\cdot \dot{\bw}(t-\beta)d\bx\,dt=\rho \int_0^T\int_D\dot{\bu}^{0}(t)\cdot \dot{\bw}(t)d\bx\,dt.
\end{equation}
Similarly
\begin{equation}\label{tractionbdrycondbeta}
-\lim_{\beta\rightarrow0}\int_0^T\int_{\partial D}\bg^{0}(t)\cdot\tilde\bw(t)\,d\sigma\,dt=-\int_0^T\int_{\partial D}\bg(t)\cdot\bw(t)\,d\sigma\,dt.
\end{equation}
To finish the proof we show $\lim_{\beta\rightarrow 0}\bw(s_\beta(t))=\bw(t)$ in $V_t$, a.e. for $t\in [0,T]$.
We use the following lemma proved in \cite{DalLar}.
\begin{lemma}\label{density}
Let $\{V_t\}_{t\in[0,T]}$ be an increasing family of closed linear subspaces of a separable Hilbert space $V$. Then, there exists a countable set $S\subset [0,T]$ such that for all $t\in [0,T]\setminus S$, we have
\begin{equation}\label{closure}
V_t=\overline{\bigcup_{s<t}V_s}.
\end{equation}
\end{lemma}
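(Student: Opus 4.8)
The plan is to reduce the statement to the elementary fact that a bounded monotone real function on $[0,T]$ has at most countably many discontinuities. For each $t\in[0,T]$ let $P_t:V\to V$ be the orthogonal projection onto $V_t$. Monotonicity of the family means $s\le t\Rightarrow V_s\subseteq V_t$, so $P_sP_t=P_tP_s=P_s$, and for every $x\in V$ the number $\|P_tx\|^2=\langle P_tx,x\rangle$ is non-decreasing in $t$. Fix an orthonormal basis $\{e_n\}_{n\ge 1}$ of the separable space $V$ and put $\phi(t)=\sum_{n\ge 1}2^{-n}\|P_te_n\|^2$. Then $\phi$ is bounded and non-decreasing, hence its set $S$ of discontinuity points is countable, and in particular $\phi$ is left-continuous at each $t\notin S$. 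By monotone convergence $\phi(t^-)=\sum_n 2^{-n}\lim_{s\uparrow t}\|P_se_n\|^2$, so $\phi(t)-\phi(t^-)=\sum_n 2^{-n}\bigl(\|P_te_n\|^2-\lim_{s\uparrow t}\|P_se_n\|^2\bigr)$ is a sum of non-negative terms; therefore $t\notin S$ forces $\lim_{s\uparrow t}\|P_se_n\|^2=\|P_te_n\|^2$ for every $n$.

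Next I would compute the left limit of $P_s$ in terms of $V_{t-}:=\overline{\bigcup_{s<t}V_s}$. Writing $Q_t$ for the orthogonal projection onto the closed subspace $V_{t-}$ (which is contained in $V_t$), the Pythagorean identity gives $\|P_sx\|^2=\|x\|^2-\operatorname{dist}(x,V_s)^2$ and $\|Q_tx\|^2=\|x\|^2-\operatorname{dist}(x,V_{t-})^2$. The distances are non-increasing along the nested family $\{V_s\}_{s<t}$, and $\operatorname{dist}(x,\bigcup_{s<t}V_s)=\operatorname{dist}(x,\overline{\bigcup_{s<t}V_s})=\operatorname{dist}(x,V_{t-})$, so $\lim_{s\uparrow t}\|P_sx\|^2=\|Q_tx\|^2$ for every $x\in V$. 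Comparing with the previous paragraph yields $\|Q_te_n\|^2=\|P_te_n\|^2$ for all $n$ whenever $t\notin S$.

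Then I would upgrade this norm identity to the asserted subspace identity. From $V_{t-}\subseteq V_t$ we have $Q_t=Q_tP_t$, so for $t\notin S$ and any $n$, $\|Q_t(P_te_n)\|=\|Q_te_n\|=\|P_te_n\|$; since $Q_t$ is an orthogonal projection, this equality forces $P_te_n\in\operatorname{range}Q_t=V_{t-}$. Because $P_t$ is a continuous surjection of $V$ onto $V_t$, the family $\{P_te_n\}_{n\ge 1}$ is total in $V_t$, so its closed linear span is all of $V_t$; as that span lies in the closed subspace $V_{t-}$, we conclude $V_t\subseteq V_{t-}$. The reverse inclusion $V_{t-}\subseteq V_t$ is immediate since $V_s\subseteq V_t$ for $s<t$ and $V_t$ is closed. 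Hence $V_t=\overline{\bigcup_{s<t}V_s}$ for every $t\in[0,T]\setminus S$ with $S$ countable, which is the claim.

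The bookkeeping here (monotonicity of projections, countability of the jump set of a monotone function, density of $\{P_te_n\}$ in $V_t$) is routine. The one step I would write out with care is the identification $\lim_{s\uparrow t}\|P_sx\|^2=\|Q_tx\|^2$, which is exactly the statement that the nested union of the ranges of $P_s$, $s<t$, has closure equal to the range of $Q_t$; this is clean once phrased through distances to $x$, but it is the substantive analytic content of the lemma, and everything else is elementary Hilbert-space manipulation on top of it.
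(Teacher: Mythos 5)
The paper does not contain its own proof of Lemma~\ref{density}: it is quoted with the phrase ``We use the following lemma proved in \cite{DalLar},'' so the reference, not the paper, carries the argument. There is therefore no in-paper proof to compare against.

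Your proof is correct, and it is in the same spirit as the standard argument from \cite{DalLar}. Both rest on two ingredients: that the projections $P_t$ onto $V_t$ make $t\mapsto\|P_t x\|^2$ nondecreasing for each $x$, and that a bounded monotone function on $[0,T]$ has a countable jump set. The usual presentation takes a countable dense family $\{v_k\}$ in $V$ and sets $S=\bigcup_k S_k$ where $S_k$ is the discontinuity set of $t\mapsto\|P_tv_k\|^2$; you instead bundle the countably many monotone functions $t\mapsto\|P_te_n\|^2$ (over an orthonormal basis) into a single monotone $\phi$, which is a tidy variant. Your identification $\lim_{s\uparrow t}\|P_sx\|^2=\|Q_tx\|^2$ via the distance formula $\|P_sx\|^2=\|x\|^2-\operatorname{dist}(x,V_s)^2$ is exactly the right mechanism, and the final upgrade from ``$\|Q_te_n\|=\|P_te_n\|$ for all $n$'' to ``$V_t=V_{t-}$'' is handled cleanly through the equality case of $\|Q_ty\|\le\|y\|$ and totality of $\{P_te_n\}$ in $V_t$. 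No gaps.
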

Observe that
\begin{equation}\label{equevalence}
\bigcup_{0<\beta}V_{s_\beta(t)}=\bigcup_{s<t}V_s,
\end{equation}
so  $\lim_{\beta\rightarrow 0}\bw(s_\beta(t))=\bw(t)$ in $V_t$, a.e. for $t\in [0,T]$, hence
\begin{equation}\label{Ilimitbeta}
\lim_{\beta\rightarrow 0}\int_D\mathbb{C}\mathcal{E}\bu^0(t):\mathcal{E}\tilde\bw(t)\,d\bx=\int_D\mathbb{C}\mathcal{E}\bu^0(t):\mathcal{E}\bw(t)\,d\bx.
\end{equation}
Since $\bu^0\in\mathcal{V}$ it is also clear from Cauchy's inequality applied to \eqref{Ilimit} that for $\beta>0$ that
\begin{equation}\label{Ilimitbetabound}
\left|\int_D\mathbb{C}\mathcal{E}\bu^0(t):\mathcal{E}\tilde\bw(t)\,d\bx\right|\leq C\Vert\bw(t)\Vert_{V_T},
\end{equation}
and 
\begin{equation}\label{Ilimitbetatime}
\lim_{\beta\rightarrow 0}\int_0^T\int_D\mathbb{C}\mathcal{E}\bu^0(t):\mathcal{E}\tilde\bw(t)\,d\bx\,dt=\int_0^T\int_D\mathbb{C}\mathcal{E}\bu^0(t):\mathcal{E}\bw(t)\,d\bx\,dt.
\end{equation}
follows from the Lesbegue dominated convergence theorem. Collecting results we have
\begin{equation}
\label{weaksolnfinal}
-\int_0^T\rho\int_D\dot\bu(t)\cdot\dot{\bw}(t)\,d\bx\,dt+\int_0^T\int_D\mathbb{C}\mathcal{E}\bu(t):\mathcal{E}\bw(t)\,d\bx\,dt=\int_0^T\int_{\partial D}\bg(t)\cdot\bw(t)\,d\sigma\,dt,
\end{equation}
for all $\bw\in C_c^\infty((0,T);V_T)$ with $\bw(t)\in V_t$ and theorem \ref{zerohorizonweaksoln} is proved.


\section{Conclusions}
\label{s:conclusions}

In this paper we use a double well energy within a perydynamic formulation.  We provide the boundary value problem satisfied by the limit displacement $\bu^0$.  The limit displacement  $\bu^0(\bx,t)$  satisfies the boundary conditions of the dynamic brittle fracture problem given by
\begin{itemize}


\item  Prescribed inhomogeneous traction boundary conditions.

\item Balance of linear momentum as described by the linear elastic wave equation.

\item Zero traction on the sides of the evolving crack.

\item Displacement jumps can only occur inside the crack set $\Gamma_t$.

\end{itemize}
In this way the boundary value problem for the elastic field for dynamic Linear Elastic Fracture Mechanics (LEFM)  is recovered as described in \cite{Freund}, \cite{RaviChandar}, \cite{Anderson}, 
\cite{Slepian}. Moreover the limit displacement $\bu^0$ is a weak solution of the wave equation on the time dependent domain $D_t$ containing the running crack. This establishes a rigorous connection between the nonlocal fracture formulation using a peridynamic model derived from a double well potential and the wave equation posed on cracking domains given in \cite{DalToader}.

One can assume a more general crack structure for the nonlocal model and pass to the local limit to see that the nonlocal elastic displacements converge to limits that are weak solutions to the wave equation on a more general cracking domain. As an example we change body forces  and initial conditions as appropriate and consider a pice-wise smooth curve $\Gamma\subset D$ of length $L$ originating at the the notch $(x_1=\ell(0),\,x_2=0)$ containing all nonlocal crack centerlines for $t\in[0,T]$. For a given horizon the crack centerline at time $t$ is characterized by the curve $J^\epsilon(t)$ originating at  the notch $(x_1=\ell(0),\,x_2=0)$   of length $\sigma^\epsilon(t)$ at time $t$. The centerline length grows and is assumed to be an increasing function in time. The failure zone is defined as
\begin{equation}
\begin{aligned}
FZ^\epsilon(t)=\{\bx\in D,\,\xi\in\mathcal{H}_1(0):\, \bx=\by+\epsilon\xi, \,\hbox{ and }\by\in J^\epsilon(t)\}.
\end{aligned}
\label{failure2}
\end{equation}
We introduce the curve $\tilde{J}^\epsilon(t)$ lying on $\Gamma$ and containing $J^\epsilon(t)$ with length $\sigma^\epsilon+C\epsilon$
and the softening zone is defined by
\begin{equation}
\begin{aligned}
SZ^\epsilon(t)=\{\bx\in D,\,\xi\in\mathcal{H}_1(0):\, \bx=\by+\epsilon\xi,\,\hbox{ and }\by\in \tilde{J}^\epsilon(t)\},
\end{aligned}
\label{failure2}
\end{equation}
where $FZ^\epsilon(t)\subset SZ^\epsilon(t)$.
As before  we can pass to a subsequence $\epsilon_n\rightarrow0$ to find an increasing distance $\sigma^0(t)$. 
 Lemma \ref{measures} extends to this case and additionally when  $\sigma^0(t)$ is continuous and strictly increasing 
we apply arguments identical to those given in section \ref{s:weaksolnproof} to show that the limit displacement $\bu^0(t)$ is the weak solution to the wave equation inside the cracking domain. 
 More generally it is conjectured that nonlocal elastodynamics converge to weak solutions of the wave equation for growing cracks described by closed countably rectifiable subsets of $D$ with bounded one dimensional Hausdorff measure.


\newcommand{\noopsort}[1]{}

\end{document}